\newtheorem{theorem}{Theorem}[section]
\newtheorem{lemma}[theorem]{Lemma}
\newtheorem{corollary}[theorem]{Corollary}
\newtheorem{claim}{Claim}
\theoremstyle{definition}
\newtheorem{defn}{Definition}[section]
\theoremstyle{remark}
\newtheorem{remark}{Remark}[section]
\newcommand\Sgn{\operatorname{Sgn}}
\newcommand\sgn{\operatorname{sgn}}
\newcommand\Sp{\operatorname{Sp}}
\newcommand\Id{\operatorname{Id}}
\newcommand\Span{\operatorname{Span}}
\newcommand\ind{\operatorname{ind}}
\begin{document}
\hypersetup{pageanchor=false}
\title{On bifurcation of eigenvalues along convex symplectic paths}
\author{Yinshan Chang$^{1}$\thanks{Supported by the Fundamental Research Funds for the Central Universities (No. YJ201661). E-mail: ychang@scu.edu.cn}, Yiming Long$^2$\thanks{Partially supported by NSFC Grants 11131004 and 11671215, LPMC of Ministry of Education of China, Nankai University, and the Beijing Center for Mathematics and Information Interdisciplinary Sciences at Capital Normal University. E-mail: longym@nankai.edu.cn},
Jian Wang$^3$\thanks{Supported by China Postdoctoral Science Foundation (No. 2013T60251) and Natural Science Foundation of China (No. 11401320 and No. 11571188). E-mail: wangjian@nankai.edu.cn} \\$^1$Department of Mathematics, University of Sichuan, Chengdu 610065\\$^2$Chern Institute of Mathematics and LPMC, Nankai University, Tianjin 300071 \\$^{3}$Chern Institute of Mathematics, Nankai University, Tianjin 300071
}

\date{}
\maketitle

\begin{abstract}
 We consider a continuously differentiable curve $t\mapsto \gamma(t)$ in the space of $2n\times 2n$ real symplectic matrices, which is the solution of the following ODE:
 \begin{equation*}
 \frac{\mathrm{d}\gamma}{\mathrm{d}t}(t)=J_{2n}A(t)\gamma(t), \gamma(0)\in\Sp(2n,\mathbb{R}),
\end{equation*}
where $J=J_{2n}\overset{\text{def}}{=}\begin{bmatrix}0 & \Id_n\\-\Id_n & 0\end{bmatrix}$ and $A:t\mapsto A(t)$ is a continuous in the space of $2n\times2n$ real matrices which are symmetric. Under certain convexity assumption (which includes the particular case that $A(t)$ is strictly positive definite for all $t\in\mathbb{R}$), we investigate the dynamics of the eigenvalues of $\gamma(t)$ when $t$ varies, which are closely related to the stability of such Hamiltonian dynamical systems. We rigorously prove the qualitative behavior of the branching of eigenvalues and explicitly give the first order asymptotics of the eigenvalues. This generalizes classical Krein-Lyubarskii theorem on the analytic bifurcation of the Floquet multipliers under a linear perturbation of the Hamiltonian. As a corollary, we give a rigorous proof of the following statement of Ekeland: $\{t\in\mathbb{R}:\gamma(t)\text{ has a Krein indefinite eigenvalue of modulus }1\}$ is a discrete set.
\end{abstract}

\section{Introduction}
\subsection{The introduction of the model and the main assumption}
We consider linearized Hamiltonian equations in $\mathbb{R}^{2n}$ of the following type
\begin{equation}\label{eq: ham system R modified initial cont pos}
 \frac{\mathrm{d}\gamma}{\mathrm{d}t}(t)=J_{2n}A(t)\gamma(t), \gamma(0)\in\Sp(2n,\mathbb{R}),
\end{equation}
where $J=J_{2n}\overset{\text{def}}{=}\begin{bmatrix}0 & \Id_n\\-\Id_n & 0\end{bmatrix}$ and $A:t\mapsto A(t)$ is a continuous periodic curve in the space of $2n\times2n$ real matrices which are symmetric with the periodicity $T$. The unique solution is a curve in the space of real symplectic matrices such that
\begin{equation}\label{eq: fundamental solution certain multiplicative stationary increment}
 \gamma(t+T)\gamma(T)^{-1}=\gamma(t)\gamma(0)^{-1}.
\end{equation}
The system \eqref{eq: ham system R modified initial cont pos} arises naturally from perturbations of linearized Hamiltonian equations. Indeed, let $\varepsilon\in\mathbb{R}$ be a \emph{real} perturbation parameter. Consider
\begin{equation}\label{eq: perturbed ham system R}
 \frac{\partial\gamma}{\partial t}(t,\varepsilon)=J_{2n}A(t,\varepsilon)\gamma(t,\varepsilon),\quad \gamma(0,\varepsilon)=\Id_{2n},
\end{equation}
where
$t\mapsto A(t,\varepsilon)$ is a locally integrable periodic curve in the space of $2n\times2n$ real matrices which are symmetric and periodic with the periodicity $T$. Moreover, we assume that $\varepsilon\mapsto(A(t,\varepsilon),t\in[0,T])$ is a \emph{continuously Fr\'{e}chet-differentiable} curve in $L^{1}[0,T]$. Then, for fixed $T$, as $\varepsilon$ varies, the endpoint matrix $\gamma(T,\varepsilon)$ is a $C^{1}$-curve satisfying \eqref{eq: ham system R modified initial cont pos}. More precisely,
\begin{equation}\label{eq: equation for partial partial gamma T epsilon}
 \frac{\mathrm{\partial}}{\mathrm{\partial}\varepsilon}\gamma(T,\varepsilon)=\gamma(T,\varepsilon)J_{2n}C(T,\varepsilon)=J_{2n}B(T,\varepsilon)\gamma(T,\varepsilon),
\end{equation}
where
\begin{equation}\label{defn: CTepsilon}
 C(T,\varepsilon)=-\gamma(T,\varepsilon)^{T}J_{2n}\frac{\partial}{\partial\varepsilon}\gamma(T,\varepsilon)=\int_{0}^{T}\gamma(t,\varepsilon)^{T}\frac{\partial}{\partial\varepsilon}A(t,\varepsilon)\gamma(t,\varepsilon)\,\mathrm{d}t
\end{equation}
and $B(T,\varepsilon)=(\gamma(T,\varepsilon)^{-1})^{T}C(T,\varepsilon)\gamma(T,\varepsilon)^{-1}$, where the superscript ``$T$'' denotes the transpose of matrices. Note that both $C$ and $B$ are symmetric real matrices and they are continuous in $\varepsilon$. We refer to Subsection~\ref{subsect: lem  expression of C} for the second inequality in \eqref{defn: CTepsilon}.

Let us go back to the system \eqref{eq: ham system R modified initial cont pos} and recall that a matrix $\gamma$ is called \emph{stable} if $\sup_{n\in\mathbb{Z}}||\gamma^n||<\infty$. We say that the system \eqref{eq: ham system R modified initial cont pos} is \emph{stable} if the matrix $\gamma(T)$ is stable. By \eqref{eq: fundamental solution certain multiplicative stationary increment}, we have that $\sup_{t\in\mathbb{R}}||\gamma(t)||<\infty$ if $\gamma(T)$ is stable. A symplectic matrix $\gamma$ is called \emph{strongly stable} if there exists a neighborhood of $\gamma$ in the space of symplectic matrix containing only stable symplectic matrices. We say that the system \eqref{eq: ham system R modified initial cont pos} is strongly stable if $\gamma(T)$ is strongly stable as a symplectic matrix. In this case, when the system \eqref{eq: ham system R modified initial cont pos} is slightly perturbed, it is still a stable system. The picture is not clear in general if we perturb a stable but not strongly stable system.

The stability is closely related to the eigenvalues of a symplectic matrix. We give a brief explanation in the following. For more details, please refer to \cite[Sections~1.1 and 1.2]{EkelandMR1051888}. The eigenvalues of a sympletic matrix come in $4$-tuple like $\{\lambda,\lambda^{-1},\bar{\lambda},\bar{\lambda}^{-1}\}$ and hence it is stable iff it is diagonalizable and all its eigenvalues stay on the unit circle $U\subset\mathbb{C}$. The characterization of strong stability was firstly formulated by Krein \cite{KreinMR0036379, KreinMR0043980}, and later independently by Moser \cite{MoserMR0096872}, as stated in the following: a symplectic matrix $\gamma$ is strongly stable iff it is stable and all its eigenvalues are \emph{Krein definite}. To be more precise, let $G=-\sqrt{-1}J$ be the Krein form which gives an inner product on $\mathbb{C}^{2n}$ via
\begin{equation}\label{eq: defn of G}
(x,y)_{G}=\sqrt{-1}\left\{\sum_{k=1}^{n}(x_k\bar{y}_{n+k}-x_{n+k}\bar{y}_{k})\right\}.
\end{equation}
Then, an eigenvalue $\lambda\in U$ is said to be \emph{Krein positive (resp. negative) definite} if the bilinear form $(x,y)\mapsto (x,y)_{G}$ is positive (resp. negative) definite on the invariant space $E_{\lambda}$ associated with the eigenvalue $\lambda$, see Subsection~\ref{subsect: notation} for the definition of $E_{\lambda}$. It is called \emph{Krein indefinite} if the bilinear form $(x,y)\mapsto (x,y)_{G}$ is indefinite on $E_{\lambda}$.

Under the convexity assumption that \emph{$A(t)$ is strictly positive definite for all $t\in\mathbb{R}$}, Ekeland \cite[Section~1.3]{EkelandMR1051888} has investigated the system~\eqref{eq: ham system R modified initial cont pos} when $\gamma(0)=\Id$. Among various results, Ekeland has claimed that the following set is isolated:
\begin{equation}\label{eq: defn D ham system}
 D\overset{\mathrm{def}}{=}\{t:\gamma(t)\text{ has a Krein indefinite eigenvalue on }U\},
\end{equation}
see \cite[Proposition~4, Section~1.3]{EkelandMR1051888}. However, later, in \cite[Erratum]{EkelandMR1051888}, Ekeland wrote that ``The proof of Proposition~4 (and probably the proposition itself) is wrong'', and he proved a weaker statement for continuous $t\mapsto A(t)$:
$D$ is a finite union of isolated sets $D_{m}$, where
\[D_{m}\overset{\mathrm{def}}{=}\left\{t\in D: \begin{array}{l}
\text{all Krein indefinite eigenvalues of }\gamma(T)\text{ have algebraic multiplicity}\\
\text{at most }m\text{ and one of them having exactly multiplicity }m
\end{array}\right\},\]
see Pages $1$ and $2$ in \cite[Erratum]{EkelandMR1051888}.

We prove that the original statement of Ekeland is still correct under the following weaker assumption on $A$:
\begin{equation}\label{defn: weaker definiteness assumption}
 A(t)\text{ is strictly positive definite on }\ker(\omega\cdot\Id-\gamma(t))\text{ for all }t\in\mathbb{R}\text{ and }\omega\in U.
\end{equation}
\begin{theorem}\label{thm: ekeland conj}
 For the system \eqref{eq: ham system R modified initial cont pos} with continuous (but not necessarily periodic) $t\mapsto A(t)$, if \eqref{defn: weaker definiteness assumption} holds, then the set $D$ defined in \eqref{eq: defn D ham system} is discrete.
\end{theorem}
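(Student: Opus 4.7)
The plan is to show that every $t_0\in D$ is isolated in $D$ via a local spectral analysis of $\gamma(t)$ near $t_0$. First I would observe that the spectrum of $\gamma(t_0)$ on $U$ is a finite set $\{\omega_1,\dots,\omega_k\}$. By the continuity of $t\mapsto\gamma(t)$ and upper semicontinuity of the spectrum, one can choose pairwise disjoint open arcs $\mathcal{U}_j\subset U$ containing $\omega_j$ and an open neighborhood $I$ of $t_0$ such that, for every $t\in I$, the eigenvalues of $\gamma(t)$ on $U$ all lie in $\bigcup_j\mathcal{U}_j$. Thus it suffices to show, for each $j$ and each $t\in I\setminus\{t_0\}$, that the cluster of eigenvalues of $\gamma(t)$ bifurcating from $\omega_j$ contains no Krein indefinite eigenvalue.

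I would then distinguish two cases according to the Krein type of $\omega_j$ for $\gamma(t_0)$. If $\omega_j$ is Krein definite, the classical Krein--Moser persistence result applies directly: the cluster of eigenvalues of $\gamma(t)$ near $\omega_j$ remains on $U$ with the same Krein signature, hence is Krein definite. If instead $\omega_j$ is Krein indefinite, the hypothesis \eqref{defn: weaker definiteness assumption} at $t=t_0$ says that $A(t_0)$ is strictly positive definite on the geometric eigenspace $\ker(\omega_j\Id-\gamma(t_0))$. I would then invoke the paper's first-order bifurcation analysis (the generalization of Krein--Lyubarskii announced in the abstract) with this positive-definite form. That analysis should imply that every eigenvalue of $\gamma(t)$ branching from $\omega_j$ either leaves $U$ in a conjugate-reciprocal quadruple or stays on $U$ as a Krein definite eigenvalue. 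Taking the two cases together shows $I\cap D\subseteq\{t_0\}$, which, since $t_0\in D$ was arbitrary, proves that $D$ is discrete.

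The main obstacle is the bifurcation step in the Krein indefinite case when $\omega_j$ has nontrivial Jordan blocks on the generalized eigenspace $E_{\omega_j}$. The splitting of $\omega_j$ into first-order branches is then governed by the interaction between the Jordan structure of $\gamma(t_0)-\omega_j\Id$ on $E_{\omega_j}$ and the quadratic form coming from $A(t_0)$ on the strictly smaller geometric eigenspace $\ker(\omega_j\Id-\gamma(t_0))$. One has to pick a Krein-adapted normal form on $E_{\omega_j}$, compute the Puiseux expansion of the characteristic polynomial of $\gamma(t)|_{E_{\omega_j}}-\omega\Id$ to leading order in $t-t_0$, and read off from positive-definiteness of $A(t_0)$ on the eigenspace that each emerging branch lying on $U$ has a definite Krein sign. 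Extracting this sign information in the presence of nontrivial Jordan structure, and thus ruling out Krein indefinite branches, is the technical heart of the proof; the finiteness of the $\omega_j$ and the robustness statement of Krein--Moser then assemble the global conclusion.
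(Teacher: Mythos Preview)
Your proposal is correct and follows essentially the same route as the paper. The paper deduces Theorem~\ref{thm: ekeland conj} as a direct corollary of Theorem~\ref{thm: krein lyubarskii c1}~b): at each $t_0$ and each eigenvalue $e^{\sqrt{-1}\theta_0}\in U$ of $\gamma(t_0)$, part~b) furnishes $\delta_0>0$ such that every eigenvalue of $\gamma(t)$ branching from $e^{\sqrt{-1}\theta_0}$, for $t\in(t_0-\delta_0,t_0)\cup(t_0,t_0+\delta_0)$, is either off $U$ or Krein definite on $U$; your separation into arcs and the dichotomy Krein-definite/Krein-indefinite is exactly this reduction, and your identification of the Jordan-block bifurcation as the technical core matches precisely what Theorem~\ref{thm: krein lyubarskii c1} (and its proof in Sections~\ref{sect: proof of KL C1 a}--\ref{sect: proof of KL C1 b}) supplies.
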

To understand the system \eqref{eq: ham system R modified initial cont pos} and prove Theorem~\ref{thm: ekeland conj}, we need to study the dynamics of the eigenvalues and the associated Krein forms as $t$ varies. There is a rather complete answer for linear perturbations of Hamiltonians of Krein positive type. To be more precise, consider the endpoint matrix $\gamma(T,\varepsilon)$ of the system \eqref{eq: perturbed ham system R} with $\varepsilon\in\mathbb{C}$ and $A(t,\varepsilon)=H(t)+\varepsilon Q(t)$, where $H(t)$ and $Q(t)$ are both $2n\times 2n$ Hermitian matrices. The perturbation is said to be of Krein positive type if $Q$ is non-negative definite and for all $\omega\in U$, there is no solution of the following equations in $\mathbb{C}^{2n}$:
\begin{equation*}
 \frac{\mathrm{d}}{\mathrm{d}t}x(t)=JH(t)x(t)\text{ and }Q(t)x(t)=0\text{ a.e., }x(T)=\omega x(0).
\end{equation*}
Although $\varepsilon$ is complex, by similar arguments, we see that \eqref{eq: equation for partial partial gamma T epsilon} and \eqref{defn: CTepsilon} also hold. And the condition of Krein positive type perturbation is precisely the condition \eqref{defn: weaker definiteness assumption} by replacing $t$ by $\varepsilon$, $A(t)$ by $B(T,\varepsilon)$ and $\gamma(t)$ by $\gamma(T,\varepsilon)$. In this special case, Krein-Lyubarskii theorem \cite{KreinLyubarskiiMR0142832} asserts the analytic properties of the eigenvalues and the eigenvectors.
\begin{theorem}[Krein-Lyubarski]
 Consider the system \eqref{eq: perturbed ham system R} with $A(t,\varepsilon)=H(t)+\varepsilon Q(t)$ and assume the perturbation is of Krein positive type. Suppose that $\varepsilon_0\in\mathbb{R}$ and that $\lambda_0\in U$ is an eigenvalue of $\gamma(T,\varepsilon_0)$. Then, as $\varepsilon$ varies from $\varepsilon_0$, $\lambda_0$ continuously branches into $\kappa$-many eigenvalues, where $\kappa=\dim\ker(\lambda_0\cdot\Id-\gamma(T,\varepsilon_0))^{2n}$ is the algebraic multiplicity of $\lambda_0$. These eigenvalues are grouped into $m$-groups, where $m=\dim\ker(\lambda_0\cdot\Id-\gamma(T,\varepsilon_0))$ is the geometric multiplicity of $\lambda_0$. Each group of eigenvalues forms a multi-valued analytic function with Puiseux expansions: for $i=1,\ldots,m$,
 \begin{equation*}
  \lambda_i(\varepsilon)-\lambda_0=\sum_{k=1}^{\infty}c_{i,k}(\varepsilon-\varepsilon_0)^{\frac{k}{j_i}},
 \end{equation*}
 where the numbers $j_1,\ldots,j_m$ are the sizes of Jordan blocks associated with the eigenvalue $\lambda_0$. In each of the expansions, the first coefficient $c_{i,1}$ ($i=1,\ldots,m$) is non-zero. For each group of eigenvalues $\lambda_i(\varepsilon)$, the eigenvalues branch from $\lambda_0$ with tangents as $\varepsilon\in\mathbb{R}$ increases from $\varepsilon_0$. These tangents form a $j_i$-star with the same angle between consecutive tangents. As $\varepsilon$ decreases from $\varepsilon_0$, the trajectories of eigenvalues also form another $j_i$-star. These two stars differ from each other by a rotation of $\frac{\pi}{j_i}$ radians. Among these $2j_i$ many tangents, exactly two are tangential to the circle at $\lambda_0$. If the trajectory of an eigenvalue branching from $\lambda_0$ is tangential to the circle $U$ at $\lambda_0$ as $\varepsilon$ varies, then that eigenvalue is Krein definite and moves on the circle $U$ in a definite direction for $\varepsilon$ sufficiently close to $\varepsilon_0$.
\end{theorem}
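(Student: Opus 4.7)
My plan is to combine Kato's analytic perturbation theory for matrix eigenvalues with the algebraic constraints supplied by the symplectic form and the Krein positive type assumption. Since $A(t,\varepsilon)=H(t)+\varepsilon Q(t)$ is affine in $\varepsilon$, the variation-of-constants formula shows that $\varepsilon\mapsto\gamma(T,\varepsilon)$ is an entire analytic curve of complex symplectic matrices. Standard analytic perturbation theory then organizes the eigenvalues of $\gamma(T,\varepsilon)$ near $\lambda_0$ into branches that are analytic in some fractional power of $\varepsilon-\varepsilon_0$, grouped according to the cyclic decomposition of the restriction of $\gamma(T,\varepsilon_0)$ to its generalized eigenspace $\ker(\lambda_0\Id-\gamma(T,\varepsilon_0))^{2n}$. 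A priori this yields $m$ groups with Puiseux denominators $p_i$ dividing the Jordan block sizes $j_i$.

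The central technical step is to upgrade $p_i\mid j_i$ to $p_i=j_i$ by showing $c_{i,1}\neq 0$. I would read off the leading Puiseux coefficient from the first-order perturbation $\partial_\varepsilon\gamma(T,\varepsilon)|_{\varepsilon_0}=J_{2n}B(T,\varepsilon_0)\gamma(T,\varepsilon_0)$ restricted to a Jordan chain at $\lambda_0$. Its pairing across the top and bottom of the chain, computed in an appropriate dual basis for the Krein form, reduces via \eqref{defn: CTepsilon} to the integral $\int_0^T\langle Q(t)x(t),x(t)\rangle\,\mathrm{d}t$ for a solution $x(t)$ of $\mathrm{d}x/\mathrm{d}t=JH(t)x$ satisfying $x(T)=\lambda_0 x(0)$. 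Since $Q(t)\geq 0$, vanishing of this integral would force $Q(t)x(t)=0$ a.e., which is precisely excluded by the Krein positive type assumption. Hence $c_{i,1}\neq 0$ and the Puiseux denominator equals $j_i$ exactly.

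For the star shape and the tangent count, I would exploit the symplectic symmetry $\lambda\mapsto\bar\lambda^{-1}$ of the spectrum of $\gamma(T,\varepsilon)$. Applied to the Puiseux branches at $\lambda_0\in U$, this reflection across $U$ permutes branches within a single group and, combined with analyticity in $(\varepsilon-\varepsilon_0)^{1/j_i}$, implies that the $j_i$ tangents arising as $\varepsilon\to\varepsilon_0^+$ form a star invariant under reflection across the tangent line to $U$ at $\lambda_0$. The second star arising as $\varepsilon\to\varepsilon_0^-$ is obtained by the substitution $(\varepsilon-\varepsilon_0)^{1/j_i}\mapsto(-1)^{1/j_i}(\varepsilon-\varepsilon_0)^{1/j_i}$ and thus differs from the first by a rotation of $\pi/j_i$. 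Among the $2j_i$ tangent directions (from both signs of $\varepsilon-\varepsilon_0$), exactly the two coming from real $j_i$-th roots of $\pm 1$ lie along the tangent line to $U$, giving the stated count of two tangential branches.

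For each tangential branch I still need to certify that the eigenvalue actually remains on $U$ and is Krein definite. The Krein quadratic form on the range of the analytic spectral projector $P(\varepsilon)=\tfrac{1}{2\pi i}\oint(z-\gamma(T,\varepsilon))^{-1}\,\mathrm{d}z$ varies analytically in $\varepsilon$, and its first-order variation along a tangential branch is controlled by the same positive integral that appeared in the non-vanishing argument, hence has a definite sign. Monotonicity of the Krein signature and Krein's characterization then force the tangential branches to stay on $U$ and move along it in a definite angular direction. The principal obstacle I anticipate is the non-vanishing step: translating the Krein positive type assumption into the non-vanishing of the specific matrix entry of $\partial_\varepsilon\gamma(T,\varepsilon_0)$ that determines $c_{i,1}$ requires careful bookkeeping with dual Jordan bases for the Krein form, together with the transport identity $x(t)=\gamma(t,\varepsilon_0)x(0)$ to re-express matrix entries as the stated integral over $[0,T]$.
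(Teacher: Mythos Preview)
Your approach differs substantially from the route the paper takes in the appendix (Section~\ref{subsect: krein lyubarskii analytic}), which is the paper's closest analogue to a proof of this statement. The paper does not work with Kato's perturbation theory or with spectral projectors at all. Instead it complexifies the parameter, $t\mapsto z\in\mathbb{C}$, and proves a key lemma (Lemma~\ref{lem: eig on U real z}): under the positivity hypothesis, $\gamma(z)$ can have an eigenvalue on $U$ only when $z$ is real. From this, Weierstrass preparation factorizes the characteristic polynomial locally as $\prod_i(z-z_i(\lambda))$ with each $z_i$ a priori multi-valued analytic; the real-$z$ lemma then forces each $z_i$ to be \emph{single}-valued, and inversion gives the Puiseux series for $\lambda$. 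The identification of the Puiseux denominators with the Jordan block sizes, the non-vanishing of $c_{i,1}$, and the reality of the quantities $a_{\ell,p}$ governing the star orientation are all read off from the exterior-algebra computation of Section~\ref{sect: proof of KL C1 a}, not from a direct pairing along a single Jordan chain. Your approach via the integral $\int_0^T\langle Q(t)x(t),x(t)\rangle\,\mathrm{d}t$ is closer in spirit to the original Krein--Lyubarskii argument and is more self-contained; the paper's route has the advantage that the same exterior-algebra machinery drives both the analytic and the $C^1$ theorems.

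There is a genuine gap in your final paragraph. Showing that a tangential branch actually \emph{remains on} $U$ is not a consequence of the Krein form on the spectral projector having a first-order variation of definite sign, nor of any ``monotonicity of the Krein signature'': the signature of $(\cdot,\cdot)_G$ on the full invariant space near $\lambda_0$ is constant in $\varepsilon$ regardless of whether individual branches leave $U$, because eigenvalues off $U$ come in $G$-isotropic pairs $\{\lambda,\bar\lambda^{-1}\}$ that contribute zero signature. So signature conservation cannot by itself pin a tangential branch to $U$. The paper's mechanism is completely different and is the standard one: if a tangential branch $\lambda_0+h(t^{1/j})$ strayed off $U$ for some small real $t>0$, then rotating $t\mapsto te^{i\varphi}$ continuously would, by the first-order asymptotics, sweep that eigenvalue across $U$ at some $\varphi\notin\pi\mathbb{Z}$, contradicting Lemma~\ref{lem: eig on U real z}. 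Without this complex-rotation argument (or an equivalent device), your proof of the last assertion is incomplete. A secondary loose end: reflection invariance of the $2j_i$-star under $\lambda\mapsto\bar\lambda^{-1}$ alone does not force two rays to lie on the tangent line to $U$ (the star could be rotated so that the tangent bisects adjacent rays); you still need the reality of $c_{i,1}^{j_i}(\sqrt{-1}\lambda_0)^{-j_i}$, which in the paper is the statement that the $a_{\ell,p}$ are real, and which in your framework would require checking that the relevant Krein-form denominator $(\xi_{i,j_i},\xi_{i,1})_G$ produces a real ratio with your positive integral.
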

 See Figure~\ref{fig: bif star} for illustrations of a $2$-star and a $3$-star. The arrows indicate moving directions of the eigenvalues as $\varepsilon$ increases.
 \begin{figure}
 \centering
 \label{fig: bif star}
 \includegraphics[width=0.4\textwidth]{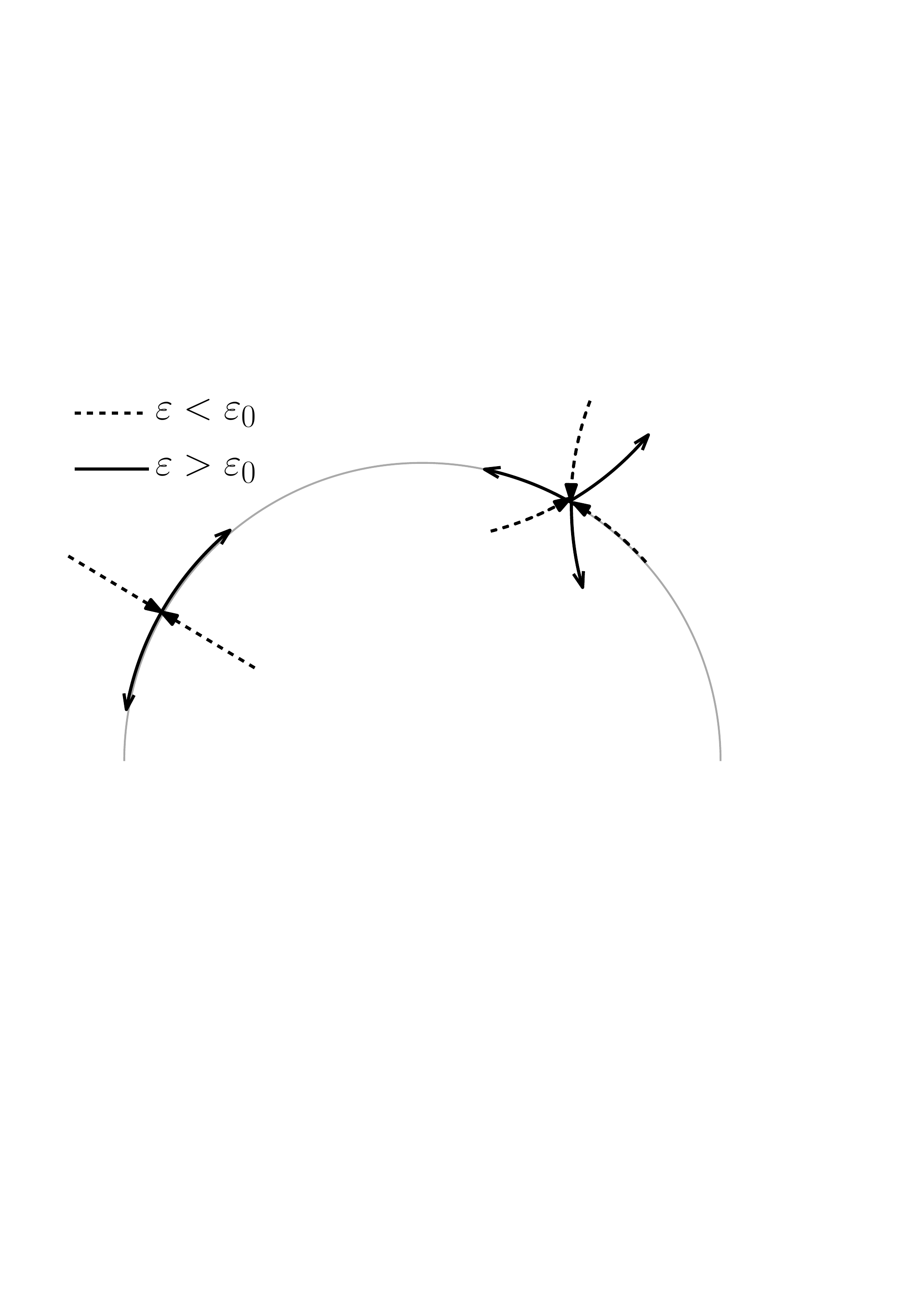}
  \caption{Bifurcation of eigenvalues}
 \end{figure}
\begin{remark}
 The eigenvectors also admit expansions in Puiseux seris as the eigenvalues, see \cite{YakubovichStarzhinskiiMR0364740}.

 In the proof of the above theorem, they also gave a recursive way to calculate $c_{i,1}$ via the matrix $Q$ and the generalized eigenvectors of $\gamma(T,\varepsilon)$ associated with $\lambda_0$. In the special case that $m=1$ or $j_1=\cdots=j_m=1$, such an expression were obtained earlier by Gelfand and Lidskii \cite{GelfandLidskiiMR0091390}. It also implies that Krein positive (resp. negative) definite eigenvalues move counter-clockwise (resp. clockwise) on the circle as the perturbation parameter $\varepsilon$ increases along the real axis. If several eigenvalues collide on the circle from $U^c$, then, necessarily, a Krein indefinite eigenvalue with non-trivial Jordan blocks (Jordan blocks of size $\geq 2$) is created. When several eigenvalues of different Krein types meet at $\lambda_0$ on the circle, they will continue their movement along the circle iff the geometric multiplicity of $\lambda_0$ equals to its algebraic multiplicity.

 Particularly, Krein-Lyubarskii theorem implies Theorem~\ref{thm: ekeland conj} for the curve $\varepsilon\mapsto\gamma(T,\varepsilon)$ given by \eqref{eq: perturbed ham system R} when $A(t,\varepsilon)=H(t)+\varepsilon Q(t)$. Indeed, by Krein-Lyubarskii theorem, for all $\varepsilon_0\in\mathbb{R}$, there exists $\delta=\delta(\varepsilon_0)>0$ such that for $\varepsilon\in(\varepsilon_0-\delta,\varepsilon_0)\cup(\varepsilon_0,\varepsilon_0+\delta)$, the eigenvalues on the circle are Krein definite.
\end{remark}
We would like to obtain a $C^{1}$-version of Krein-Lyubarskii theorem for the system \eqref{eq: ham system R modified initial cont pos} and prove that $D$ is isolated. For general $C^{1}$-perturbations, the eigenvalues and eigenvectors are no longer multi-valued analytic functions. Instead, we aim to give the first order asymptotic of the deviation of eigenvalues and to verify similar qualitative behavior of the dynamics of eigenvalues.

The argument of Krein and Lyubarskii doesn't directly apply. Their proof relies on a key lemma, which interprets the perturbation parameter $\varepsilon$ as an eigenvalue of certain self-adjoint integral operator depending on $\omega\in U$, see the lemma in \cite[Section~1]{KreinLyubarskiiMR0142832}. In this step, the linearity of the perturbation $\varepsilon\mapsto H(t)+\varepsilon Q(t)$ is crucially used. Beyond the scope of linear perturbations of Hamiltonians, if we assume the analyticity of $\varepsilon\mapsto A(t,\varepsilon)$ and follow their idea, we may encounter self-adjoint integral operators $G(\varepsilon,\omega)$ depending on two parameters $\varepsilon\in\mathbb{R}$ and $\omega\in U$. We have to show that $\{(\omega,\varepsilon):0\text{ is an eigenvalue of }G(\varepsilon,\omega)\}$ is actually the graph of an analytic function in $\omega$, which we regard as a difficult question in general. Besides, more seriously, their argument depends heavily on the analyticity of the system. This rules out the possibility of studying $C^{1}$-perturbations of the system by following their argument.

Ekeland has investigated the system \eqref{eq: ham system R modified initial cont pos} when $\gamma(0)=\Id$, $t\mapsto A(t)$ is continuous and $A(t)$ is strictly positive definite symmetric matrices for all $t$, see \cite{EkelandMR1051888}. It was proved that the moving direction of a Krein definite eigenvalue is determined by its Krein type: as $t$ increases a bit, the Krein positive (resp. negative) definite eigenvalues of $\gamma(t)$ move counter-clockwise (resp. clockwise). Krein indefinite eigenvalues appear when Krein positive definite eigenvalues meet Krein negative definite eigenvalues. He has also described the branching of a Krein indefinite eigenvalue of $\gamma(t)$ when $t$ varies from $t_0$ if $\gamma(t_0)=\Id$: if $\gamma(t_0)=\Id$, then there exists $\varepsilon_0>0$ such that for $t\in(t_0,t_0+\varepsilon_0]$ (resp. $t\in[t_0-\varepsilon_0,t_0)$), the eigenvalues of $\gamma(t)$ are all located on the unit circle, the eigenvalues on the upper semi circle are all Krein positive (resp. negative) definite and move counter-clockwise (resp. clockwise), while the eigenvalues on the lower part are all Krein negative (resp. positive) definite and move clockwise (resp. counter-clockwise). We remark that the condition $\gamma(0)=\Id$ is not essential in the above results of Ekeland. It suffices to have $\gamma(0)\in\Sp(2n,\mathbb{R})$.

In the same book, Ekeland has commented that the spirit of the branching mechanism of a Krein indefinite eigenvalue should be the same as in the special case of linear perturbations of Hamiltonians studied by Krein and Lyubarskii. However, to the best of our knowledge, there is no rigorous proof in general. Recently, when the Krein indefinite eigenvalue has algebraic multiplicity $2$ and geometric multiplicity $1$, Kuwamura and Yanagida \cite[Theorem~3.2]{KuwamuraYanagidaMR2271499} give a simple and elegant formula on the derivative of the mean of bifurcated eigenvalues, which holds \emph{without} the assumption \eqref{defn: weaker definiteness assumption}. In our opinion, under the assumption \eqref{defn: weaker definiteness assumption}, the first order terms of the pair of bifurcated eigenvalues cancel with each other and the second order terms of the pair is the same. And their formula is actually an expression for the second order term.

In the present paper, we focus on the first order term under the assumption \eqref{defn: weaker definiteness assumption} (but without any restriction on the multiplicities of the eigenvalues). Naturally, to study the branching of Krein indefinite eigenvalues $e^{\sqrt{-1}\theta_0}\in U$ of $\gamma(0)$, we need information on the Jordan blocks associated with $e^{\sqrt{-1}\theta_0}$. We need to introduce several notations for a precise statement of our $C^1$-version of Krein-Lyubarskii theorem. Note that there is a basis $\{\xi_{i,j}\}_{i=1,\ldots,m;j=1,\ldots,j_i}$ of the invariant space $E_{e^{\sqrt{-1}\theta_0}}(\gamma(0))=\ker(e^{\sqrt{-1}\theta_0}\cdot\Id-\gamma(0))^{2n}$ associated with the eigenvalue $e^{\sqrt{-1}\theta_0}$ of the matrix $\gamma(0)$ such that $m$ is the number of the Jordan blocks associated with the eigenvalue $e^{\sqrt{-1}\theta_0}$ of the matrix $\gamma(0)$, $j_1\geq j_2\geq \cdots \geq j_m\geq 1$ are the sizes of the Jordan blocks and $\{\xi_{i,j}\}_{i,j}$ are the corresponding eigenvectors, i.e., for $i=1,\ldots,m$ and $j=1,\ldots,j_i$, we have that
\begin{equation}\label{eq: gammaxi=lambda0xi-xi}
 \gamma(0)\xi_{i,j}=e^{\sqrt{-1}\theta_0}\xi_{i,j}-\xi_{i,j-1}\text{ for }j=1,\ldots,j_i,
\end{equation}
with $\xi_{i,0}=0$ and that
\begin{equation}\label{eq: xiij basis}
 \{\xi_{i,j}\}_{i=1,\ldots,m;j=1,\ldots,j_i}\text{ is a linear basis of }E_{e^{\sqrt{-1}\theta_0}}(\gamma(0)).
\end{equation}
Note that $j_1\geq \cdots\geq j_m\geq 1$ is not necessarily strictly decreasing. We break the sequence $\{j_{i}\}_{i}$ at the position where a strict decrease occurs. So, there are integers $s\geq 1$, $m_1,\ldots,m_s\geq 1$, $n_1>n_2>\cdots n_s\geq 1$ such that for $\ell=1,\ldots,s$, the integer number $n_{\ell}$ is the $\ell$-th largest size of Jordan blocks (in the strict sense) and there are exactly $m_{\ell}$ many blocks with the same size $n_{\ell}$. Hence, the total number of blocks $m=\sum_{\ell=1}^{s}m_{\ell}$ and for $\ell=1,\ldots,s$, we have that
\begin{equation}\label{eq: xiij regroup}
 j_i=n_{\ell},\text{ for }\sum_{1\leq k<\ell}m_{k}+1\leq i\leq\sum_{1\leq k\leq \ell}m_{k}.
\end{equation}
Sometimes, it is convenient\footnote{As we shall see in \eqref{eq: defn X xiJxi}, it helps to simplify the definition of $X$. Besides, the equation \eqref{eq: xiij G product binom like} is simpler in terms of $\{\eta_{i,j}\}$: $(\eta_{i,j},\eta_{i',j'})_{G}=\sqrt{-1}(\eta_{i,j+1},\eta_{i',j'})_{G}-\sqrt{-1}(\eta_{i,j},\eta_{i',j'+1})_{G}$.} to use the following sequence of vectors $\{\eta_{i,j}\}_{i=1,\ldots,m;j=1,\ldots,j_i}$ instead of $\{\xi_{i,j}\}_{i=1,\ldots,m;j=1,\ldots,j_i}$, where
\begin{equation}\label{eq: definition of etaij}
 \eta_{i,j}\overset{\mathrm{def}}{=}\left(-\sqrt{-1}e^{\sqrt{-1}\theta_0}\right)^{j}\xi_{i,j},
\end{equation}
for $i=1,\ldots,m$ and $j=1,\ldots,j_i$.
We need to introduce more notations to present our results. Define an $m\times m$ square matrix $S$, which represents the metric $\langle A(0)\cdot,\cdot\rangle$ on the space of eigenvectors associated with $e^{\sqrt{-1}\theta_0}$:
 \begin{equation}\label{eq: defn S Cxixi}
  S_{i,i'}=\langle A(0)\eta_{i,1},\eta_{i',1}\rangle=\langle A(0)\xi_{i,1},\xi_{i',1}\rangle,\quad i,i'=1,\ldots,m.
 \end{equation}
 We define an $m\times m$ square matrix $X$ by
 \begin{equation}\label{eq: defn X xiJxi}
  X_{i,i'}=(\eta_{i,j_i},\eta_{i',1})_{G}=(-1)^{j_i-1}\sqrt{-1}^{j_i}e^{(j_i-1)\sqrt{-1}\theta_0}\langle\xi_{i,j_i},J_{2n}\xi_{i',1}\rangle,\quad i,i'=1,\ldots,m.
 \end{equation}
 We write $S$ and $X$ in blocks as follows:
 \begin{equation}\label{eq: SX blocks}
  S=\begin{bmatrix}
  S^{(1,1)} & \cdots & S^{(1,s)}\\
  \vdots & \ddots & \vdots\\
  S^{(s,1)} & \cdots & S^{(s,s)}
  \end{bmatrix}\text{ and }X=\begin{bmatrix}
  X^{(1,1)} & \cdots & X^{(1,s)}\\
  \vdots & \ddots & \vdots\\
  X^{(s,1)} & \cdots & X^{(s,s)}
  \end{bmatrix},
 \end{equation}
 where $S^{(\ell,\ell')}$ and $X^{(\ell,\ell')}$ are $m_{\ell}\times m_{\ell'}$ matrices for $\ell,\ell'=1,\ldots,s$. A nice feature of $X$ is that $X$ is upper triangular in block sense and the diagonal blocks are Hermitian, see Corollary~\ref{cor: Xellell hermitian non-degenerate xell1ell2 vanishes}.

\begin{theorem}\label{thm: krein lyubarskii c1}
 Consider the system \eqref{eq: ham system R modified initial cont pos} and assume \eqref{defn: weaker definiteness assumption}. Suppose that $e^{\sqrt{-1}\theta_0}$ ($\theta_0\in\mathbb{R}$) is a Krein indefinite eigenvalue of $\gamma(0)$. Recall the notations introduced in \eqref{eq: gammaxi=lambda0xi-xi}, \eqref{eq: xiij basis}, \eqref{eq: xiij regroup}, \eqref{eq: defn S Cxixi}, \eqref{eq: defn X xiJxi} and \eqref{eq: SX blocks}.
 \begin{itemize}
 \item[a)]
 As $t$ varies from $0$, the eigenvalue $e^{\sqrt{-1}\theta_0}$ branches continuously into $\sum_{\ell=1}^{s}m_{\ell}n_{\ell}$ many eigenvalues with multiplicities, namely $\{\lambda_{\ell,p,q}(t)\}_{\ell=1,\ldots,s;p=1,\ldots,m_{\ell};q=1,\ldots,n_{\ell}}$.

 For $\ell=1,\ldots,s$, reordering $\{\lambda_{\ell,p,q}(t)\}_{q=1,\ldots,n_{\ell}}$ if necessary, we have that
 \begin{equation}\label{eq: asymp eigenvalues}
  \frac{\lambda_{\ell,p,q}(t)-e^{\sqrt{-1}\theta_0}}{\sqrt{-1}e^{\sqrt{-1}\theta_0}}\overset{t\to 0}{\sim}\left\{
  \begin{array}{ll}
  \sgn(t a_{\ell,p}) |a_{\ell,p}t|^{\frac{1}{n_{\ell}}}e^{\frac{2\pi}{n_{\ell}}\sqrt{-1}(q-1)} & \text{ if }n_{\ell}\text{ is odd},\\
  |a_{\ell,p}t|^{\frac{1}{n_{\ell}}}e^{\frac{2\pi}{n_{\ell}}\sqrt{-1}(q-1)}e^{\frac{\pi}{2n_{\ell}}\sqrt{-1}(1-\sgn(t a_{\ell,p}))} & \text{ if }n_{\ell}\text{ is even},
  \end{array}
  \right.
 \end{equation}
 where $(a_{\ell,p})_{p=1,\ldots,m_{\ell}}$ are non-zero real numbers and they are exactly the roots with multiplicities of the following polynomial in $z$
 \begin{equation}\label{eq: defn of aellp}
  \det\begin{bmatrix}
  S^{(1,1)} & \cdots & S^{(1,\ell-1)} & S^{(1,\ell)}\\
  \vdots & \ddots & \vdots & \vdots\\
  S^{(\ell-1,1)} & \cdots & S^{(\ell-1,\ell-1)} & S^{(\ell-1,\ell)}\\
  S^{(\ell,1)} & \cdots & S^{(\ell,\ell-1)} & S^{(\ell,\ell)}-zX^{(\ell,\ell)}
  \end{bmatrix}.
 \end{equation}
 \item[b)]
 There exists $\delta_0>0$ such that for $t\in(-\delta_0,0)\cup(0,\delta_0)$, $\ell=1,\ldots,s$ and $p=1,\ldots,m_{\ell}$, $(\lambda_{\ell,p,q}(t))_{q=1,\ldots,n_{\ell}}$ have different behaviors depending on the parity of $n_{\ell}$ and the sign of $t a_{\ell,p}$: if $n_{\ell}$ is odd, then $(\lambda_{\ell,p,q}(t))_{q=2,\ldots,n_{\ell}}$ stay outside of the unit circle $U$, and $\lambda_{\ell,p,1}$ is Krein positive definite on $U$ (resp. Krein negative definite) if $t a_{\ell,p}>0$ (resp. $t a_{\ell,p}<0$). If $n_{\ell}$ is even and $t a_{\ell,p}<0$, then $(\lambda_{\ell,p,q}(t))_{q=1,\ldots,n_{\ell}}$ stay outside of the unit circle $U$; if $n_{\ell}$ is even and $t a_{\ell,p}>0$, then $\lambda_{\ell,p,1}(t)\in U$ is Krein positive definite, $\lambda_{\ell,p,n_{\ell}/2+1}(t)\in U$ is Krein negative definite, and the other $\lambda_{\ell,p,q}(t)$ stay outside of $U$.
 \end{itemize}
\end{theorem}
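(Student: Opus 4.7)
The plan is, first, to reduce the problem to a $\kappa\times\kappa$ finite-dimensional matrix perturbation problem by spectral projection onto the generalized eigenspace of $\gamma(t)$ near $e^{\sqrt{-1}\theta_0}$ (where $\kappa=\sum_{\ell}m_\ell n_\ell$), and then to extract the first-order asymptotics of the branching eigenvalues by a Newton-polygon / Schur-complement analysis of the characteristic polynomial, carried out level by level in the stratification by block size. Concretely, I would define the Riesz projector $P(t)=\frac{1}{2\pi\sqrt{-1}}\oint_{\Gamma}(\zeta\,\Id-\gamma(t))^{-1}\,\mathrm{d}\zeta$ for a small contour $\Gamma$ around $e^{\sqrt{-1}\theta_0}$ that separates it from the other eigenvalues of $\gamma(0)$; since $\gamma\in C^{1}$, $P(t)$ is $C^{1}$ in $t$ and has constant rank $\kappa$ for $|t|$ small, so the $\kappa$ eigenvalues of $\gamma(t)$ inside $\Gamma$ (counted with algebraic multiplicity) are exactly the ones branching from $e^{\sqrt{-1}\theta_0}$. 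A $C^{1}$ frame for $\operatorname{Im}P(t)$ extending the Jordan basis $\{\xi_{i,j}\}$ at $t=0$ then presents the restriction of $\gamma(t)$ as a $C^{1}$ family $M(t)$ of $\kappa\times\kappa$ matrices with $M(0)$ in Jordan normal form at $e^{\sqrt{-1}\theta_0}$.

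The next step is to identify the matrix $M'(0)$, which controls the first-order asymptotics. Using $\dot\gamma(0)=J_{2n}A(0)\gamma(0)$ together with $\gamma(0)^{T}J_{2n}\gamma(0)=J_{2n}$, the entries of $M'(0)$ in the rescaled basis $\{\eta_{i,j}\}$ are expressible through the Krein pairings $(\eta_{i,j},\eta_{i',j'})_{G}$ and the quadratic-form entries $\langle A(0)\eta_{i,j},\eta_{i',j'}\rangle$; in particular, the ``top-to-top'' block (indices $j=j'=1$) is precisely $S$, while the ``bottom-to-top'' pairing $(\eta_{i,j_i},\eta_{i',1})_{G}$ is precisely $X$. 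This is where the block upper-triangularity of $X$ and the Hermitian character of its diagonal blocks $X^{(\ell,\ell)}$ (supplied by the cited Corollary~\ref{cor: Xellell hermitian non-degenerate xell1ell2 vanishes}) are essential.

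With $M(t)$ in hand, I would set $\lambda=e^{\sqrt{-1}\theta_0}(1+\sqrt{-1}z)$ and expand $\det(\lambda\,\Id_\kappa-M(t))$ in powers of $(z,t)$. For a single Jordan block of size $n_\ell$ the leading behavior is $z^{n_\ell}=a\,t$ with $a$ the ``bottom-to-top'' matrix element; for $m_\ell$ blocks of the same size it is the generalized eigenvalue problem $\det(z^{n_\ell}X^{(\ell,\ell)}-t\,S^{(\ell,\ell)})=0$. For mixed sizes one peels off the levels $n_1>n_2>\cdots>n_s$ one at a time, at each level taking a Schur complement against the previously treated blocks; because $X$ is block upper triangular, this Schur complement reduces exactly to the determinant in \eqref{eq: defn of aellp}, whose non-zero roots are the $a_{\ell,p}$. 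The convexity hypothesis \eqref{defn: weaker definiteness assumption} forces $S$ to be positive definite on the geometric eigenspace and so rules out a zero root, guaranteeing $a_{\ell,p}\neq 0$. Part b) then follows by deciding, among the $n_\ell$-th roots of $a_{\ell,p}t$, which are purely imaginary in $z$ (these are the eigenvalues landing on $U$) and computing their Krein signature from the first-order rotation rate, together with the standard quadruple symmetry $\{\lambda,\lambda^{-1},\bar\lambda,\bar\lambda^{-1}\}$ which pairs up the off-circle roots.

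The hard part will be the decoupling in step three: although the diagonal blocks of $X$ organize into a tidy hierarchy, the mixed matrices $S^{(\ell,\ell')}$ and $X^{(\ell,\ell')}$ with $\ell\neq\ell'$ do produce cross-terms in the characteristic polynomial, and one must show, by a Newton-polygon argument that exploits the differing scales $t^{1/n_\ell}$, that these cross-terms are strictly subleading at each level, so that the Schur-complemented determinant in \eqref{eq: defn of aellp} really determines the leading coefficients $a_{\ell,p}$. The block upper-triangularity of $X$ is what ensures that the cross-contribution flowing from a larger block into a smaller one vanishes, and it is this vanishing, combined with the convexity assumption forcing non-degeneracy of $S$, that makes the level-by-level reduction go through and yields the announced asymptotics \eqref{eq: asymp eigenvalues}.
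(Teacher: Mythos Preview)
Your outline for part a) is essentially the paper's own approach. The paper does the dimension reduction (Lemma~\ref{lem: dim reduction}) and then expands $\det(\lambda\,\Id-\gamma(t))$ at $\lambda_0=e^{\sqrt{-1}\theta_0}$, computing the asymptotics of the coefficients $c_k(t)$ via exterior powers of the maps $\lambda_0\Id-\gamma(0)$ and $\dot\gamma(0)$ (Lemma~\ref{lem: asymptotics coeff char polynomial}); this is exactly your ``identify $M'(0)$'' step, just packaged differently. The Newton-polygon/blow-up with $w=(\lambda-\lambda_0)t^{-1/n_\ell}$ and the Schur-complement reduction from the $d$-matrix to the $S,X$ determinant \eqref{eq: defn of aellp} is also literally what the paper does in Subsection~\ref{subsect: proof of thm KL a from lem asymp coeff}. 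So for a) you are on the right track and aligned with the paper.

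Part b), however, has a real gap. Your claim that ``among the $n_\ell$-th roots of $a_{\ell,p}t$, the purely imaginary ones in $z$ are the eigenvalues landing on $U$'' only says that those branches are \emph{tangent} to $U$ to first order; it does not show that they actually \emph{stay on} $U$. In the $C^1$ setting nothing prevents a higher-order drift off the circle, and the quadruple symmetry $\lambda\leftrightarrow\bar\lambda^{-1}$ does not rescue you when several $a_{\ell,p}$ coincide, since then two distinct tangentially-moving branches could pair off as a conjugate-inverse couple leaving $U$. Likewise, ``computing their Krein signature from the first-order rotation rate'' presupposes they are on $U$ and simple, which is exactly what is in question.

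The paper closes this gap by an entirely different mechanism: it first proves the theorem in the real-analytic case (Appendix~\ref{subsect: krein lyubarskii analytic}), where a rigidity lemma (Lemma~\ref{lem: eig on U real z}: $\gamma(z)$ can have an eigenvalue on $U$ only for real $z$) forces tangential branches to remain on $U$. For continuous $A$, it approximates by analytic $A^{(M)}$ via Bernstein polynomials, verifies \eqref{defn: weaker definiteness assumption} passes to the approximants, and then uses part a) to get a matching \emph{upper} bound on the number of on-circle branches while a monotone index $\nu_+^{(M)}(t)$ (Claim~\ref{claim: non-decrease nu M t}) supplies the \emph{lower} bound and the Krein signs. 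Your proposal needs some substitute for this step; first-order information alone is not enough.
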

\begin{remark}\label{rem: quanlitative A independence}
Note that $X^{(\ell,\ell)}$ is Hermitian and non-degenerate, see Corollary~\ref{cor: Xellell hermitian non-degenerate xell1ell2 vanishes}. By Sylvester's law of inertia, the number $\sharp \{p=1,\ldots,m_{\ell}:a_{\ell,p}>0\}$ equals the positive index of inertia of $X^{(\ell,\ell)}$. Hence, the instant moving directions of the eigenvalues (when $t$ increases (or decreases) from $0$), is purely determined by $\gamma(0)$ under the assumption \eqref{defn: weaker definiteness assumption}. When $t$ is sufficiently close to $0$, the number of the Krein positive (or negative) definite eigenvalues depends only on $\gamma(0)$.
\end{remark}
\begin{remark}
 If we replace ``positive definiteness'' by ``negative definiteness'' in \eqref{defn: weaker definiteness assumption}, i.e.,
  \begin{equation}\label{defn: weaker definiteness assumption 2}
 A(t)\text{ is strictly negative definite on }\ker(\omega\cdot\Id-\gamma(t))\text{ for all }t\in\mathbb{R}\text{ and }\omega\in U,
\end{equation}
  then, all the results still hold under a time reversal $t\mapsto A(t)$. But if we remove ``positive'' from \eqref{defn: weaker definiteness assumption}, i.e., if we assume \begin{equation}\label{defn: weaker definiteness assumption 3}
 A(t)\text{ is strictly definite on }\ker(\omega\cdot\Id-\gamma(t))\text{ for all }t\in\mathbb{R}\text{ and }\omega\in U,
\end{equation}
then the system is a mixture of positive and negative systems, which is locally decomposable. To be more precise, we denote by $\Lambda_{+}(t)$ (resp. $\Lambda_{-}(t)$) the eigenvalues $\omega$ on the unit circle $U$ such that $A(t)$ is strictly positive (resp. negative) definite on $\ker(\omega\cdot\Id-\gamma(t))$. Under the condition \eqref{defn: weaker definiteness assumption 3}, the Hausdorff distance between the two sets $\Lambda_{+}(t)$ and $\Lambda_{-}(t)$ is strictly positive and lower semi-continuous in $t$. By Lemma~\ref{lem: dim reduction}, locally as $t$ varies, the eigenvalues are separated into two groups. The first group corresponds to a possibly smaller system satisfying \eqref{defn: weaker definiteness assumption} and the second group corresponds to a system satisfying \eqref{defn: weaker definiteness assumption 2}.
\end{remark}

The proof of Theorem~\ref{thm: krein lyubarskii c1} a) is different from previous argument by Krein, Lyubarskii and Ekeland. Besides, our argument is direct and elementary. We analyze the asymptotics of coefficients of the characteristic polynomial of $\gamma(t)$. This is linked to the Jordan structure of the symplectic matrix via exterior products of linear maps. By continuity of roots depending on the coefficients of certain properly normalized polynomial, we deduce the asymptotics of eigenvalues. This part is some sort of blowup analysis. For the part b) of Theorem~\ref{thm: krein lyubarskii c1}, we use Theorem~\ref{thm: krein lyubarskii c1} a) together with a local $C^{1}$-approximation of $t\mapsto \gamma(t)$ by analytic symplectic paths. Indeed, Theorem~\ref{thm: krein lyubarskii c1} a) provides an upper bound for the number of Krein definite eigenvalues on the circle by first order asymptotics of the eigenvalues. On the other hand, the approximation argument provides matching lower bounds. However, such an approximation argument alone is not sufficient to predict the movement of eigenvalues. We have to combine it with the monotonicity of certain index function, see Claim~\ref{claim: non-decrease nu M t}. As an intermediate step, in the appendix, we sketch the argument of Theorem~\ref{thm: krein lyubarskii c1} when $t\mapsto A(t)$ is real analytic.

\subsection{Organization of the paper}
We collect definitions and notations, prepare some useful properties in Section~\ref{sect: preliminaries}. We prove Theorem~\ref{thm: krein lyubarskii c1} a) in Section~\ref{sect: proof of KL C1 a} and Theorem~\ref{thm: krein lyubarskii c1} b) in Section~\ref{sect: proof of KL C1 b}. We sketch the argument of Theorem~\ref{thm: krein lyubarskii c1} for the analytic case in Subsection~\ref{subsect: krein lyubarskii analytic}.

\section{Preliminaries}\label{sect: preliminaries}
\subsection{Notations and definitions}\label{subsect: notation}
\begin{itemize}
 \item For two positive integers $m$ and $n$, we denote by $M_{m\times n}(\mathbb{C})$ (resp. $M_{m\times n}(\mathbb{R})$) the set of $m\times n$ complex (resp. real) matrices. When $m=n$, we use the notations $M_{n}(\mathbb{C})$ and $M_{n}(\mathbb{R})$ for simplicity. For a square matrix, we define its size as the number of rows in the matrix.
 \item For a matrix $M$, we denote by $M^{T}$ the transpose of $M$. For a complex matrix $M$, we denote by $M^{*}$ the conjugate transpose of $M$.
 \item For $n\geq 1$, we denote by $\Id_n$ the $n\times n$ identity matrix and define $J_{2n}\overset{\text{def}}{=}\begin{bmatrix}0 & \Id_n\\-\Id_n & 0\end{bmatrix}$. Then, $J_{2n}^{*}=J_{2n}^{T}=-J_{2n}$ and $J_{2n}^{2}=-\Id$.
 \item For a vector space $V$ and a finite number of subspaces $\{V_i\}_{i\in I}$, we denote by $\sum_{i\in I}V_i$ the sum of the vector spaces $\sum_{i\in I}V_i$.
 \item For vectors $v_1,\ldots,v_n$ in a vector space $V$, we denote by $\wedge_{j=1}^{n}v_j$ the exterior product $v_1\wedge v_2\wedge\cdots\wedge v_n$. (Note that $\wedge$ is associative.) We denote by $\Lambda^{n}(V)$ the linear span of all such $\wedge_{j=1}^{n}v_j$ and denote by $\Lambda(V)$ the direct sum $\bigoplus_{n\geq 0}\Lambda^{n}(V)$ with the convention that $\Lambda^{0}(V)=\{0\}$. For a totally ordered set $P=\{p_1,\ldots,p_n\}$ with $p_1\prec p_2\prec\cdots\prec p_n$ and vectors $(v_p)_{p\in P}$ indexed by $P$, we denote by $\wedge_{p\in P}v_p$ the exterior product $v_{p_1}\wedge v_{p_2}\wedge\cdots\wedge v_{p_n}$. (Note that $\Lambda(V)$ is a vector space. Hence, if we take $(v_p)_{p\in P}$ from the vector space $\Lambda(V)$, then we define the exterior products of exterior products in a consistent manner.)
 \item For $m\geq 1$, the inner product $\langle\cdot,\cdot\rangle$ on $\mathbb{C}^{m}$ is defined by \[\langle x,y\rangle=\sum_{j=1}^{m}x_j\bar{y}_{j}.\] Then, for $x,y\in\mathbb{C}^{2n}$,
     \[(x,y)_{G}=\sqrt{-1}\langle x,J_{2n}y\rangle=-\sqrt{-1}\langle J_{2n}x,y \rangle=\sqrt{-1}\left\{\sum_{k=1}^{n}(x_n\bar{y}_{n+k}-x_{n+k}\bar{y}_{k})\right\}.\]
 \item For $n\geq 1$ and a linear subspace $V$ of $\mathbb{C}^{2n}$, we denote by $V^{\perp_G}$ the \emph{symplectic orthogonal complement} of $V$, i.e.,
 \[V^{\perp_{G}}=\{x\in\mathbb{C}^{2n}:(x,y)_{G}=0, \forall y\in V\}.\]
 The linear subspace $V$ is \emph{symplectic} if $V\cap V^{\perp_{G}}=\{0\}$. When $V$ is a linear subspace of $\mathbb{R}^{2n}$, we replace $\mathbb{C}^{2n}$ by $\mathbb{R}^{2n}$ in the above definition.
 \item For a $k\times k$ complex valued matrix $M$ and an eigenvalue $\lambda$ of $M$, the geometric multiplicity of $\lambda$ is defined as $\dim\ker(\lambda\cdot\Id-M)$ and the algebraic multiplicity is defined as $\dim\ker(\lambda\cdot\Id-M)^{k}$. We denote by $E_{\lambda}=E_{\lambda}(M)$ the invariant subspace of $\mathbb{C}^{k}$, i.e., \[E_{\lambda}=\{x\in\mathbb{C}^k:(\lambda\cdot\Id-M)^{k}x=0\}.\]
 \item Denote by $p(\lambda,t)$ the characteristic polynomial of the matrix $\gamma(t)$, i.e.,
 \[p(\lambda,t)=\det(\lambda\cdot\Id-\gamma(t)).\]
\end{itemize}

\subsection{Exterior powers of linear maps}\label{subsect: exterior powers of linear maps}
We recall exterior powers of a linear map $A$ and its relation with its determinant $\det(A)$.

Starting from several linear maps on a vector space $V$, there are many ways to combine them to define multi-linear skew symmetric maps (or equivalently, linear maps on the exterior products $\Lambda^{m}(V)$ of $V$). We follow the construction in \cite[Section~3.7]{Winitzki2010}. For natural numbers $k\leq m$, the author defines a linear map on $\Lambda^{m}(V)$ by taking certain ``skew symmetrization'' of tensors of $k$ many linear maps $A$ with $m-k$ many identity maps. For our purpose, it suffices to take $m$ to be the dimension of $V$. But we need a slightly generalization to allow the combination of three linear maps $A_1$, $A_2$ and the identity map. We introduce these notations in the following definition.

\begin{defn}\label{defn: ext powers of linear maps}
 Let $A:V\to V$ be a linear map on an $n$-dimensional vector space $V$. For $k=0,\ldots,n$, we define the exterior powers of $\bigwedge(n,k,A):\Lambda^{n}(V)\to \Lambda^{n}(V)$ as a linear map as follows:
 \begin{equation}
  \bigwedge(n,k,A)(v_1\wedge\cdots \wedge v_n)\overset{\mathrm{def}}{=}\sum_{\sigma\in\{0,1\}^{n}:\sum_{i}\sigma_i=k}\wedge^{n}_{i=1}(\sigma_i\cdot Av_i+(1-\sigma_i)\cdot v_i).
 \end{equation}
 Similarly, for linear maps $A_1,A_2:V\to V$, for $k_1,k_2=0,\ldots,n$, we define the linear map $\bigwedge(n,k_1,k_2,A_1,A_2):\Lambda^{n}(V)\to \Lambda^{n}(V)$ as follows:
\begin{multline}
 \bigwedge(n,k_1,k_2,A_1,A_2)(v_1\wedge\cdots \wedge v_n)\\
 \overset{\mathrm{def}}{=}\sum_{\sigma\in\{0,1,2\}^{n}:\sum_{i}1_{\sigma_i=1}=k_1,\sum_{i}1_{\sigma_i=2}=k_2}\wedge^{n}_{i=1}(1_{\sigma_i=1}\cdot A_1v_i+1_{\sigma_i=2}\cdot A_2v_i+1_{\sigma_i=0}\cdot v_i),
\end{multline}
 Since $\Lambda^{n}(V)$ is $1$-dimensional, we identify the $\bigwedge(n,k,A)$ (or $\bigwedge(n,k_1,k_2,A_1,A_2)$) with the unique scaling factor, which is also denoted by $\bigwedge(n,k,A)$ (or $\bigwedge(n,k_1,k_2,A_1,A_2)$).
\end{defn}

In the above definition, for each vector $v_i$, we choose one from the three linear maps $\Id$, $A_1$ and $A_2$ and apply it to $v_i$. For the assignment of linear maps to the linear basis, the only constraint is that the map $A_1$ occurs $k_1$ many times and the map $A_2$ occurs exactly $k_2$ many times. All these assignments have equal weight.

Note that $\det(A)$ is identified with the linear map $\bigwedge(n,n,A)$ on the $1$-dimensional vector space $\Lambda^{n}(V)$. In particular, for an eigenvalue $\lambda_0$ of the matrix $\gamma(0)$, we have that
\begin{multline}\label{eq: char polynomial three terms}
 p(\lambda,t)=\det(\lambda\cdot\Id-\gamma(t))=\det((\lambda-\lambda_0)\cdot\Id+(\lambda_0\cdot\Id-\gamma(0))-(\gamma(t)-\gamma(0)))\\
 =\sum_{k=0}^{2n}(\lambda-\lambda_0)^{k}\sum_{k_1+k_2=2n-k,k_1\geq 0,k_2\geq 0}(-1)^{k_2}\cdot\bigwedge(2n,k_1,k_2,\lambda_0\cdot\Id-\gamma(0),\gamma(t)-\gamma(0)).
\end{multline}
In the above calculation, we express the determinant by wedge powers of the sum of linear maps $(\lambda-\lambda_0)\cdot\Id$, $\lambda_0\cdot\Id-\gamma(0)$ and $\gamma(0)-\gamma(t)$, expand it according to distributive law and collect the terms with the same times of occurrence, where $k_1$ is the time of occurrence of $\lambda_0\cdot\Id-\gamma(0)$ and $k_2$ counts the occurrence of $\gamma(0)-\gamma(t)$.

\subsection{Continuity of roots of polynomials}
Consider a polynomial with complex coefficients of degree at most $n$. We will need the following lemma on the continuity of the roots as the coefficients vary.
\begin{lemma}\label{lem: continuity of roots of general polynomials}
 Let $W$ be a neighborhood of $0$. Let $P_{t}(z)=\sum_{j=0}^{n}c_{j}(t)z^{j}$, where $c_{j}(t)\in\mathbb{C}$ and $t\in W$. Suppose that $t\mapsto c_j(t)$ is continuous for $j=0,\ldots,n$ and $t\in W$. Denote by $d(t)$ the degree of the polynomial $P_{t}$. Suppose that $d(t)=n$ for $t\in W\setminus\{0\}$ and $d(0)=m\leq n$. Then, there exist $m$ continuous complex valued functions $z_1,\ldots,z_{m}$ on $W$ and $n-m$ continuous complex valued functions $z_{m+1},\ldots,z_{n}$ on $W\setminus\{0\}$ such that
 \begin{itemize}
  \item for $t\neq 0$, $z_1(t),\ldots,z_n(t)$ are roots of $P_{t}$,
  \item for $t=0$, $z_1(0),\ldots,z_m(0)$ are roots of $P_0$,
  \item for $i=m+1,\ldots,n$, we have that $\lim_{t\to 0}z_i(t)=\infty$.
 \end{itemize}
\end{lemma}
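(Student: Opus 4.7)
The strategy is to combine two classical tools: Rouch\'e's theorem, to track which roots remain in a bounded region and which escape to infinity as $t\to 0$, and the classical theorem on continuous dependence of roots of a polynomial on its coefficients, to upgrade this multiset-level picture to the required continuous selections $z_1(t),\ldots,z_n(t)$. The key observation that makes Rouch\'e work uniformly is that, since each $t\mapsto c_j(t)$ is continuous on $W$, $P_t(z)\to P_0(z)$ uniformly on every compact subset of $\mathbb{C}$ as $t\to 0$; the vanishing of the top $n-m$ coefficients at $t=0$ is precisely what drives some roots off to infinity.

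First, I would locate the $m$ bounded roots. Let $\alpha_1,\ldots,\alpha_r$ be the distinct roots of $P_0$ with multiplicities $\kappa_1,\ldots,\kappa_r$, so $\sum_i\kappa_i=m$. Choose $\rho>0$ with the disks $\overline{D(\alpha_i,\rho)}$ pairwise disjoint, and set $\mu=\min_i\min_{|z-\alpha_i|=\rho}|P_0(z)|>0$. For $t$ sufficiently close to $0$, uniform convergence on the compact circles $|z-\alpha_i|=\rho$ gives $|P_t-P_0|<\mu\le|P_0|$ there, and Rouch\'e's theorem produces exactly $\kappa_i$ roots of $P_t$ in each $D(\alpha_i,\rho)$, i.e.\ $m$ roots in total. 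Next I would show that the remaining $n-m$ roots of $P_t$ escape to infinity. Fix $R_0$ larger than $\max_i|\alpha_i|+\rho$. For every $R\ge R_0$ there exists $\epsilon_R>0$ such that $P_t$ has no roots in the annulus $\{R_0<|z|\le R\}$ for $0<|t|<\epsilon_R$: if not, a sequence $t_k\to 0$ and roots $\zeta_k$ of $P_{t_k}$ in this compact annulus would subconverge to some $\zeta^\ast$ with $R_0\le|\zeta^\ast|\le R$, and uniform convergence would yield $P_0(\zeta^\ast)=0$, contradicting the choice of $R_0$. Combined with the earlier count $n-m$ roots of $P_t$ (for $0<|t|<\epsilon_R$) lie in $\{|z|>R\}$, so these roots blow up as $t\to 0$.

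To extract continuous individual functions $z_i(t)$, I would invoke the classical theorem on continuous dependence of the roots of a polynomial of fixed degree on its coefficients (Ostrowski/Kato/Kurtz; cf.\ Marden or Rahman--Schmeisser): over the connected real set $W\setminus\{0\}$ (shrunk if necessary), the $n$ roots of $P_t$ admit a continuous labelling $z_1(t),\ldots,z_n(t)$. By the Rouch\'e step, one may choose the labelling so that $z_1(t),\ldots,z_m(t)$ lie inside the $D(\alpha_i,\rho)$ for small $t$, and defining $z_i(0)$ to be the corresponding root of $P_0$ extends each of them continuously across $t=0$. The remaining $z_{m+1}(t),\ldots,z_n(t)$ tend to $\infty$ as $t\to 0$ by the escape-to-infinity step.

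The main obstacle is the continuous labelling, not the Rouch\'e analysis: the multiset of roots always varies continuously in the Hausdorff metric by the argument above, but producing continuous single-valued selections requires the possibility of having roots cross and get re-paired. Restricting to a connected one-dimensional neighborhood $W\subset\mathbb{R}$ of $0$ is precisely what rules out the monodromy obstruction and lets the classical continuity theorem apply; once it does, it is essentially cosmetic bookkeeping to separate the $m$ selections that extend to $t=0$ from the $n-m$ that do not.
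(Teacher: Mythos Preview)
Your approach is essentially the paper's: both rest on uniform convergence of $P_t\to P_{t_0}$ on compacts and on counting roots via the argument principle (the paper phrases this as the contour integral $\frac{1}{2\pi\sqrt{-1}}\int_\Gamma\frac{dz}{P_t(z)}$, evidently intending $\int_\Gamma\frac{P_t'(z)}{P_t(z)}\,dz$, while you invoke Rouch\'e directly), and your write-up is actually more explicit than the paper's short sketch in separating the bounded roots from the escaping ones and in naming the continuous-selection step. One small correction: for $W\subset\mathbb{R}$ an interval about $0$, the set $W\setminus\{0\}$ is not connected but has two components, so the continuous labelling of the $n$ roots must be carried out on each half separately and then matched at $t=0$; your Rouch\'e count (exactly $m$ roots stay in $\bigcup_i D(\alpha_i,\rho)$ on either side, converging to the $\alpha_i$ with the right multiplicities) makes this matching routine.
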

\begin{proof}
 By assumptions, for $t_0\in W$, $P_{t}(z)\overset{t\to t_0}{\to}P_{t_0}(z)$ uniformly for $z$ on compacts. Hence, for any continuous loop $\Gamma$ avoiding the roots of $P_{t_0}$, for $t$ sufficiently close to $t_0$, $P_{t}$ does not vanish on $\Gamma$ and
 \begin{equation}\label{eq: contour integral converge as epsilon to delta}
  \lim_{t\to t_0}\frac{1}{2\pi\sqrt{-1}}\int_{\Gamma}\frac{1}{P_{t}(z)}\,\mathrm{d}z=\frac{1}{2\pi\sqrt{-1}}\int_{\Gamma}\frac{1}{P_{t_0}(z)}\,\mathrm{d}z.
 \end{equation}
 Also, note that for a simple loop avoiding the roots of $P_{t}$, $\frac{1}{2\pi\sqrt{-1}}\int_{\Gamma}\frac{1}{P_{t}(z)}\,\mathrm{d}z$ is precisely the number of roots inside the loop. (The interior and exterior region are determined by the orientation of the loop.) Eventually, Lemma~\ref{lem: continuity of roots of general polynomials} holds since \eqref{eq: contour integral converge as epsilon to delta} holds for all continuous loops avoiding the roots of $P_{t_0}$.

\end{proof}

\subsection{Properties of symplectic matrices}
We collect some well-known properties of symplectic matrices in this subsection. The following observations, although elementary, are frequently used in some calculation. For a complex number $\lambda$, the adjoint of $\lambda\cdot\Id$ under $(\cdot,\cdot)_{G}$ is $\bar{\lambda}\cdot\Id$, i.e., $\forall x,y\in\mathbb{C}^{2n}$, we have
 \begin{equation}
  (\lambda x,y)_{G}=(x,\bar{\lambda} y)_{G}.
 \end{equation}
 For a symplectic matrix $\gamma$, the adjoint of $\gamma$ under $(\cdot,\cdot)_{G}$ is $\gamma^{-1}$, i.e., $\forall x,y\in\mathbb{C}^{2n}$, we have
 \begin{equation}
  (\gamma x,y)_{G}=(x,\gamma^{-1} y)_{G}.
 \end{equation}
 For all symplectic subspaces $V$, the restriction of the bilinear form $(\cdot,\cdot)_{G}$ on $V$ is non-degenerate. For a symplectic subspace $V$, if it is invariant under the linear symplectic transform $\gamma$, then so is its symplectic orthogonal complement $V^{\perp_{G}}$.

The following criteria on the $G$-orthogonality of invariant spaces is basically \cite[Proposition 5, Section 2, Chapter 1]{EkelandMR1051888}.
\begin{lemma}\label{lem: G othorgonal invariant spaces}
 Let $\lambda$ and $\mu$ be two eigenvalues of the symplectic matrix $\gamma\in\Sp(2n,\mathbb{R})$. If $\lambda\bar{\mu}\neq 1$, then the invariant spaces $E_{\lambda}$ and $E_{\mu}$ are $G$-othorgonal. Consider a partition $\{P_1,\ldots,P_k\}$ of the set of eigenvalues of $\gamma$ such that each $P_i$ is stable under the circular reflection $z\mapsto \bar{z}^{-1}$. For each $i$, let $E_{i}=\cup_{\lambda\in P_i}E_{\lambda}$. Then, $E_1,\ldots,E_k$ is a $G$-orthogonal decomposition of $\mathbb{C}^{2n}$. In particular, when $\lambda\in U$, we have the following $G$-othorgonal decomposition of $\mathbb{C}^{2n}$:
 \begin{equation}
  \mathbb{C}^{2n}=E_{\lambda}\oplus F_{\lambda},
 \end{equation}
 where $F_{\lambda}$ is the direct sum of $\{E_{\mu}\}_{\mu\neq\lambda}$. Hence, if $\lambda$ is a simple eigenvalue on $U$, it is Krein definite.
\end{lemma}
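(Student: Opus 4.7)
The plan is to leverage the adjoint identity $(\gamma x, y)_G = (x, \gamma^{-1} y)_G$ recorded earlier in this subsection: under the Krein pairing the ``dual'' of $\lambda\cdot\Id - \gamma$ is $\bar{\lambda}\cdot\Id - \gamma^{-1}$. The first claim will follow by transferring the defining identity $(\lambda\cdot\Id - \gamma)^{2n} x = 0$ for $x \in E_{\lambda}$ across the pairing, and then noting that, under the hypothesis $\lambda\bar{\mu}\neq 1$, the operator obtained is an automorphism of $E_{\mu}$.

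First I would verify, by a short induction on $k$ using the adjoint rules for scalars and for $\gamma$, that the adjoint of $(\lambda\cdot\Id - \gamma)^{k}$ under $(\cdot,\cdot)_G$ is $(\bar{\lambda}\cdot\Id - \gamma^{-1})^{k}$. Applied at $k=2n$ this gives, for arbitrary $x\in E_{\lambda}$ and $y\in E_{\mu}$,
\[
0 \;=\; \bigl((\lambda\cdot\Id - \gamma)^{2n}x,\,y\bigr)_G \;=\; \bigl(x,\,(\bar{\lambda}\cdot\Id - \gamma^{-1})^{2n}y\bigr)_G.
\]
The pivotal observation is that $E_{\mu}$ is $\gamma^{-1}$-invariant and the only eigenvalue of $\gamma^{-1}|_{E_{\mu}}$ is $\mu^{-1}$, so the spectrum of $(\bar{\lambda}\cdot\Id - \gamma^{-1})|_{E_{\mu}}$ is the single number $\bar{\lambda}-\mu^{-1}$, non-zero precisely when $\lambda\bar{\mu}\neq 1$. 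Under that hypothesis the restriction of $(\bar{\lambda}\cdot\Id - \gamma^{-1})^{2n}$ to $E_{\mu}$ is an automorphism; as $y$ ranges over $E_{\mu}$ so does its image, forcing $(x,y')_G=0$ for every $y'\in E_{\mu}$, which is the required $G$-orthogonality.

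For the partition statement, stability of each $P_i$ under $z\mapsto \bar{z}^{-1}$ ensures that whenever $\lambda\in P_i$ and $\mu\in P_j$ with $i\neq j$ one has $\bar{\lambda}^{-1}\in P_i$, so $\bar{\lambda}^{-1}\neq \mu$, i.e.\ $\lambda\bar{\mu}\neq 1$. Combining the first claim block by block with the Jordan decomposition $\mathbb{C}^{2n}=\bigoplus_{\lambda}E_{\lambda}$ produces the advertised $G$-orthogonal decomposition $\mathbb{C}^{2n}=\bigoplus_i E_i$. The particular case $\lambda\in U$ is the admissible partition $\{\{\lambda\},\text{all other eigenvalues}\}$, which is preserved by $z\mapsto \bar{z}^{-1}$ because $\bar{\lambda}^{-1}=\lambda$ and the full eigenvalue set of a real symplectic matrix is preserved by this involution.

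For the last assertion, non-degeneracy of $(\cdot,\cdot)_G$ on $\mathbb{C}^{2n}$ together with the $G$-orthogonal splitting $\mathbb{C}^{2n}=E_{\lambda}\oplus F_{\lambda}$ forces the restriction of $(\cdot,\cdot)_G$ to $E_{\lambda}$ to be non-degenerate; when $\lambda$ is a simple eigenvalue this is a non-degenerate Hermitian form on a one-dimensional space, which is automatically definite, so $\lambda$ is Krein definite. The only genuine conceptual point is the invertibility of $(\bar{\lambda}\cdot\Id - \gamma^{-1})^{2n}|_{E_{\mu}}$ under $\lambda\bar{\mu}\neq 1$; the rest is routine adjoint bookkeeping and standard Jordan decomposition.
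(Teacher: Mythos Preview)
Your argument is correct; the adjoint computation and the invertibility of $(\bar{\lambda}\cdot\Id-\gamma^{-1})^{2n}$ on $E_{\mu}$ when $\lambda\bar{\mu}\neq 1$ is exactly the standard mechanism. The paper does not supply its own proof of this lemma but simply cites \cite[Proposition~5, Section~2, Chapter~1]{EkelandMR1051888}, so there is nothing to compare against beyond noting that your proof is the expected one and matches the spirit of the reference.
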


The ``inner product'' under $(\cdot,\cdot)_{G}$ of the generalized eigenvectors in \eqref{eq: gammaxi=lambda0xi-xi} and \eqref{eq: xiij basis} must satisfy certain algebraic relations:
\begin{lemma}\label{lem: vector G products in same invariant space}
 Suppose that $\lambda$ is an eigenvalue of the symplectic matrix $\gamma$. We use the same notations $\{\xi_{i,j}\}_{i=1,\ldots,m;j=1,\ldots,j_i}$ and $\{\eta_{i,j}\}_{i=1,\ldots,m;j=1,\ldots,j_i}$ as \eqref{eq: gammaxi=lambda0xi-xi}, \eqref{eq: xiij basis}, \eqref{eq: xiij regroup} and \eqref{eq: definition of etaij} for the eigenvalue $\lambda$ of $\gamma$ instead of the eigenvalue $e^{\sqrt{-1}\theta_0}$ of $\gamma(0)$. For $i,i'=1,\ldots,m$, $j=0,\ldots,j_i-1$ and $j'=0,\ldots,j_{i'}-1$, we have that
 \begin{equation}\label{eq: xiij G product binom like}
  (\lambda^{j}\xi_{i,j},\lambda^{j'}\xi_{i',j'})_{G}=(\lambda^{j+1}\xi_{i,j+1},\lambda^{j'}\xi_{i',j'})_{G}+(\lambda^{j}\xi_{i,j},\lambda^{j'+1}\xi_{i',j'+1})_{G},
 \end{equation}
 with the convention that $\xi_{i,0}=0$ for all $i=1,\ldots,m$. In particular, when $j+j'\leq \max(j_i,j_{i'})$, we have that
 \[(\eta_{i,j},\eta_{i',j'})_{G}=(\xi_{i,j},\xi_{i',j'})_{G}=0.\]
 For fixed $i,i'=1,\ldots,s$, we have the same value $(\eta_{i,j},\eta_{i',j'})_{G}$ for all $j=1,\ldots,j_i$ and $j'=1,\ldots,j_{i'}$ such that $j+j'=\max(j_i,j_{i'})+1$.
\end{lemma}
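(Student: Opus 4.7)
The plan is to reduce first to the case $|\lambda|=1$ and then extract all three statements from the symplectic invariance of $(\cdot,\cdot)_{G}$ combined with the Jordan relations. If $|\lambda|\neq 1$ then $\lambda\bar{\lambda}\neq 1$, so Lemma~\ref{lem: G othorgonal invariant spaces} applied with $\mu=\lambda$ gives $E_{\lambda}\perp_{G}E_{\lambda}$; hence every inner product that appears in the lemma vanishes and all three conclusions hold trivially. So I assume $|\lambda|=1$, i.e., $\bar{\lambda}=\lambda^{-1}$. For the identity \eqref{eq: xiij G product binom like}, I apply $(\gamma x,\gamma y)_{G}=(x,y)_{G}$ with $x=\xi_{i,j+1}$ and $y=\xi_{i',j'+1}$ and expand both slots using $\gamma\xi_{i,j+1}=\lambda\xi_{i,j+1}-\xi_{i,j}$ and its counterpart for $y$. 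The term $\lambda\bar{\lambda}(\xi_{i,j+1},\xi_{i',j'+1})_{G}$ produced on the left cancels $(\xi_{i,j+1},\xi_{i',j'+1})_{G}$ on the right thanks to $|\lambda|^{2}=1$, and after rearranging one obtains the Pascal-like identity
\[
(\xi_{i,j},\xi_{i',j'})_{G}=\lambda(\xi_{i,j+1},\xi_{i',j'})_{G}+\bar{\lambda}(\xi_{i,j},\xi_{i',j'+1})_{G},
\]
valid for $0\leq j<j_{i}$ and $0\leq j'<j_{i'}$. Multiplying both sides by $\lambda^{j}\bar{\lambda}^{j'}$ recovers \eqref{eq: xiij G product binom like}.

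For the vanishing statement, set $A_{j,j'}=(\xi_{i,j},\xi_{i',j'})_{G}$ and assume without loss of generality that $j_{i}\leq j_{i'}=:M$. I induct on $j$ to prove that $A_{j,k}=0$ whenever $0\leq k\leq M-j$. The base case $j=0$ is immediate from $\xi_{i,0}=0$. For the inductive step, solve the recursion just derived at $(j,k)$ for $A_{j+1,k}$:
\[
A_{j+1,k}=\lambda^{-1}A_{j,k}-\lambda^{-1}\bar{\lambda}\,A_{j,k+1}.
\]
The recursion is admissible when $k\leq M-j-1$, since then $j<j_{i}$ and $k<M=j_{i'}$; and both $A_{j,k}$ and $A_{j,k+1}$ vanish by the inductive hypothesis because their second indices are $\leq M-j$. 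The corresponding vanishing of $(\eta_{i,j},\eta_{i',j'})_{G}$ is automatic, since by definition $\eta_{i,j}=(-\sqrt{-1}\lambda)^{j}\xi_{i,j}$ and so $(\eta_{i,j},\eta_{i',j'})_{G}$ differs from $(\xi_{i,j},\xi_{i',j'})_{G}$ only by the nonzero scalar $(-\sqrt{-1})^{j}(\sqrt{-1})^{j'}\lambda^{j}\bar{\lambda}^{j'}$.

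For the constancy on the diagonal $j+j'=M+1$, I pass to the $\eta$-form of the recursion (as noted in the footnote),
\[
(\eta_{i,j},\eta_{i',j'})_{G}=\sqrt{-1}(\eta_{i,j+1},\eta_{i',j'})_{G}-\sqrt{-1}(\eta_{i,j},\eta_{i',j'+1})_{G},
\]
which is equivalent to the $\xi$-recursion above after substituting $\eta_{i,j}=(-\sqrt{-1}\lambda)^{j}\xi_{i,j}$. For every $(j,j')$ with $j+j'=M$, $j<j_{i}$ and $j'<j_{i'}$, the left-hand side vanishes by the previous paragraph, forcing $(\eta_{i,j+1},\eta_{i',j'})_{G}=(\eta_{i,j},\eta_{i',j'+1})_{G}$. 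As $(j,j')$ ranges over these admissible points, the resulting equalities connect every pair of neighboring points on the diagonal $j+j'=M+1$ inside $[1,j_{i}]\times[1,j_{i'}]$, yielding a chain of equalities and hence a single common value.

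The main obstacle is the careful bookkeeping of the index ranges in which each recursion is valid: the reduction $j_{i}\leq j_{i'}$ removes these issues in the inductive step of the vanishing statement, while for the constancy argument one must verify that the admissible pairs $(j,j')$ with $j+j'=M$, $j<j_{i}$ and $j'<j_{i'}$ really do cover every adjacency on the diagonal $j+j'=M+1$ inside $[1,j_{i}]\times[1,j_{i'}]$, which works out precisely because $M=j_{i'}$.
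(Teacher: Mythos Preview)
Your proof is correct and follows essentially the same route as the paper: derive the Pascal-type recursion from $(\gamma\xi_{i,j+1},\gamma\xi_{i',j'+1})_G=(\xi_{i,j+1},\xi_{i',j'+1})_G$ and then read off the vanishing and constancy statements by induction along the recursion. The paper's proof is simply the terse version of yours (``the rest directly follows''); your added reduction to $|\lambda|=1$ is a point the paper glosses over but which is indeed needed for the cancellation to go through, with the $|\lambda|\neq 1$ case being trivial by $G$-orthogonality of $E_\lambda$ with itself.
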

\begin{proof}
 Since $\gamma$ is sympletic, we have that $(\gamma v_1,\gamma v_2)_{G}=(v_1,v_2)_{G}$ for all $v_1,v_2\in\mathbb{C}^{2n}$. By taking $v_1=\xi_{i,j+1}$ and $v_2=\xi_{i',j'+1}$, we obtain \eqref{eq: xiij G product binom like}. The rest directly follows from \eqref{eq: xiij G product binom like}.
\end{proof}

Note that $(x,y)_{G}=\overline{(y,x)_{G}}$. From non-degeneracy of $(\cdot,\cdot)_{G}$ and Lemmas~\ref{lem: G othorgonal invariant spaces} and \ref{lem: vector G products in same invariant space}, we get
\begin{corollary}\label{cor: Xellell hermitian non-degenerate xell1ell2 vanishes}
 Recall the notations \eqref{eq: defn X xiJxi} and \eqref{eq: SX blocks}. For $\ell=1,\ldots,s$, the matrix $X^{(\ell,\ell)}$ is Hermitian and non-degenerate. For $1\leq \ell_1<\ell_2\leq s$, we have that $X^{(\ell_2,\ell_1)}=0$.
\end{corollary}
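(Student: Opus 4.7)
The corollary has three parts: (a) each diagonal block $X^{(\ell,\ell)}$ is Hermitian; (b) below the block-diagonal, $X^{(\ell_2,\ell_1)}=0$ when $\ell_1<\ell_2$; and (c) each $X^{(\ell,\ell)}$ is non-degenerate. My plan is to read (a) and (b) directly off Lemma~\ref{lem: vector G products in same invariant space}, and then deduce (c) by combining (b) with the non-degeneracy of the Krein form on $E_{e^{\sqrt{-1}\theta_0}}(\gamma(0))$ coming from Lemma~\ref{lem: G othorgonal invariant spaces}.

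For (b), with $i$ in the $\ell_2$-block and $i'$ in the $\ell_1$-block I have $j_i=n_{\ell_2}<n_{\ell_1}=j_{i'}$, so $\max(j_i,j_{i'})=n_{\ell_1}$ while the index sum of $X_{i,i'}=(\eta_{i,n_{\ell_2}},\eta_{i',1})_G$ is $n_{\ell_2}+1\leq n_{\ell_1}$; this falls in the vanishing range of Lemma~\ref{lem: vector G products in same invariant space}, giving $X_{i,i'}=0$. For (a), with $j_i=j_{i'}=n_\ell$ the ``constant-on-the-antidiagonal'' clause of the same lemma produces $X_{i,i'}=(\eta_{i,n_\ell},\eta_{i',1})_G=(\eta_{i,1},\eta_{i',n_\ell})_G$, and the conjugate symmetry $(x,y)_G=\overline{(y,x)_G}$ then yields $X_{i,i'}=\overline{X_{i',i}}$.

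For (c), observe that (b) makes $X$ block upper-triangular, so $\det X=\prod_\ell\det X^{(\ell,\ell)}$ and it suffices to show that $X$ itself is invertible. Set $\lambda=e^{\sqrt{-1}\theta_0}$ and $K=\Span\{\eta_{i,1}:1\leq i\leq m\}\subset E_\lambda$. By Lemma~\ref{lem: G othorgonal invariant spaces} the Krein form is non-degenerate on $E_\lambda$, so the evaluation map $E_\lambda\to K^{*}$, $v\mapsto(v,\cdot)_G\big|_K$, is surjective, giving $\dim(K^{\perp_{G}}\cap E_\lambda)=N-m$, where $N=\sum_i j_i=\dim E_\lambda$. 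On the other hand, for $v=\sum_{i',j'}c_{i',j'}\eta_{i',j'}$, Lemma~\ref{lem: vector G products in same invariant space} forces $(\eta_{i',j'},\eta_{i,1})_G=0$ unless $j'=j_{i'}$ and $j_{i'}\geq j_i$, in which case it equals $X_{i',i}$; hence $(v,\eta_{i,1})_G=(X^{T}c^{\mathrm{top}})_i$, where $c^{\mathrm{top}}=(c_{i',j_{i'}})_{i'}$. Thus $v\in K^{\perp_{G}}\cap E_\lambda$ imposes no condition on the coefficients with $j'<j_{i'}$ and constrains $c^{\mathrm{top}}$ only by $X^{T}c^{\mathrm{top}}=0$. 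Counting: $\dim(K^{\perp_{G}}\cap E_\lambda)=(N-m)+(m-\mathrm{rank}(X))=N-\mathrm{rank}(X)$, so $\mathrm{rank}(X)=m$ and $X$ is non-degenerate.

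The only nontrivial step is the dimension bookkeeping in (c): one must recognise that the pairing of a general $v\in E_\lambda$ against the bottom-of-chain vectors $\eta_{i,1}$ sees only the top coefficients $c_{i',j_{i'}}$, and that the resulting linear system is governed exactly by $X^{T}$. Once this identification is made, surjectivity plus rank-nullity closes the argument without further calculation.
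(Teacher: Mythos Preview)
Your proof is correct and follows essentially the same approach as the paper, which simply records that the corollary follows from conjugate symmetry $(x,y)_G=\overline{(y,x)_G}$, Lemma~\ref{lem: G othorgonal invariant spaces}, and Lemma~\ref{lem: vector G products in same invariant space}. You have supplied the details the paper omits: the vanishing and constancy clauses of Lemma~\ref{lem: vector G products in same invariant space} give (a) and (b), and for (c) you correctly identify that pairing a general $v\in E_\lambda$ against the eigenvectors $\eta_{i,1}$ depends only on the top-of-chain coefficients through $X^{T}$, so the non-degeneracy of $(\cdot,\cdot)_G$ on $E_\lambda$ forces $\operatorname{rank}(X)=m$, whence each $\det X^{(\ell,\ell)}\neq 0$ by the block-triangular structure.
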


\section{Proof of Theorem~\ref{thm: krein lyubarskii c1} a)}\label{sect: proof of KL C1 a}
As the proof of Theorem~\ref{thm: krein lyubarskii c1} a) is long and technical, we decide to give the sketch of the proof and provide some intuitive ideas in advance. Suppose $\lambda_0=e^{\sqrt{-1}\theta_0}\in U$ is an eigenvalue of $\gamma(0)$. We expand the characteristic polynomial $p(\lambda,t)=\det(\lambda\cdot\Id-\gamma(t))$ at $e^{\sqrt{-1}\theta_0}$:
\begin{equation}\label{defn: ck expansion at lambda0}
 p(\lambda,t)=\sum_{k=0}^{2n}c_{k}(t)(\lambda-e^{\sqrt{-1}\theta_0})^k.
\end{equation}
In order to study the asymptotics of the eigenvalues as $t$ varies from $0$, we study the asymptotics of the coefficients $\{c_k(t)\}_{k=0,\ldots,2n}$ in Lemma~\ref{lem: asymptotics coeff char polynomial} in Subsection~\ref{subsect: proof of lem asymp coeff char poly}. We will illustrate the results of Lemma~\ref{lem: asymptotics coeff char polynomial} and explain the way to prove Theorem~\ref{thm: krein lyubarskii c1} a) from Lemma~\ref{lem: asymptotics coeff char polynomial} by a concrete example. But we will not sketch the technical proof of Lemma~\ref{lem: asymptotics coeff char polynomial}.

In Lemma~\ref{lem: asymptotics coeff char polynomial}, we will show that $c_k(t)=O(t^{\varphi(k)})$ as $t\mapsto 0$, where $\varphi(k)$ is a certain integer valued function in $k$. Let us precisely give the value of $\varphi(k)$. Denote by $N$ the algebraic multiplicity of $\lambda_0$. Then, $\varphi(k)$ is simply $0$ for $k\geq N$. For $k=0,\ldots,N-1$, the value of $\varphi(k)$ can be obtained graphically via Young diagrams as follows: we list the sizes of Jordan blocks associated with $\lambda_0$ in non-increasing order $j_1\geq j_2\geq \cdots\geq j_m$. The sequence $\{j_i\}_{i=1,\ldots,m}$ forms a partition of $N$ and is represented by a Young diagram. The Young diagram consists of unit squares placed side by side. For $i=1,\ldots,m$, the $i$-th row has exactly $j_{m+1-i}$ many squares. All these rows are aligned to the left. Please see Figure~\ref{fig: youngdiagram} for the Young diagram associated with the partition $4+2+2+1=9$.
\begin{figure}
\centering
\begin{minipage}[c]{0.4\textwidth}
\label{fig: youngdiagram}
\includegraphics{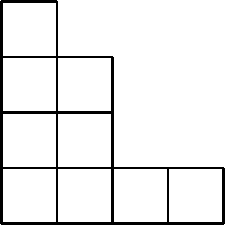}
\caption{Young diagram}
\end{minipage}
\begin{minipage}[c]{0.4\textwidth}
\label{fig: youngdiagramphi1}
\includegraphics{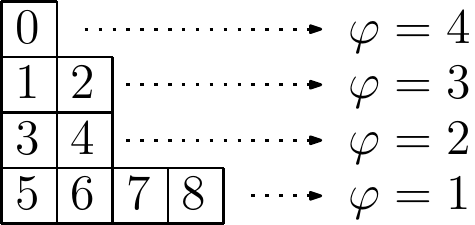}
\caption{$k\mapsto\varphi(k)$}
\end{minipage}
\end{figure}
To get the value of $\varphi$, we fill the diagram with integers $\{0,\ldots,N-1\}$ from the top row to the bottom row. In each row, we fill the diagram from the left to the right. Then, each integer $k$ is filled in the $\varphi(k)$-th row from the bottom, see Figure~\ref{fig: youngdiagramphi1}. Alternatively, from a finite non-increasing sequence of integers $\{j_i\}_{i=1,\ldots,m}$, their partial sums $\{\sum_{i=k}^{m}j_i\}_{k=1,\ldots,m}$ form a strictly decreasing sequence, the upper boundary of the corresponding new Young diagram represents the graph of the function $\varphi(k)$, see Figure~\ref{fig: youngdiagramphi2} for the same sizes of Jordan blocks as Figures~\ref{fig: youngdiagram} and \ref{fig: youngdiagramphi1}.
\begin{figure}
\centering
\label{fig: youngdiagramphi2}
\includegraphics{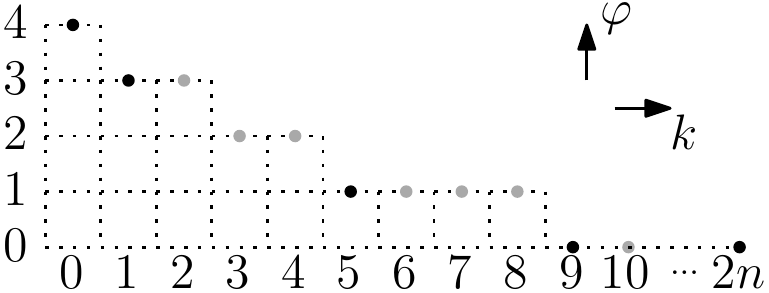}
\caption{$k\to\varphi(k)$}
\end{figure}
The black and grey points give the graph of $\varphi$. (Recall that $\varphi$ is set to $0$ for $k\geq N$ and $N=9$ in the above figures.)

Let us explain the difference between black and grey points in the following. Roughly speaking, the black points separate the Jordan blocks with different sizes. Alternatively, the black points are exactly the extremal points of the convex hull of the discrete domain $\{(\tilde{k},\tilde{\varphi}):\tilde{k}=0,\ldots,2n,\tilde{\varphi}\in\mathbb{Z},\tilde{\varphi}\geq \varphi(\tilde{k})\}$ above the graph of $\varphi$, see Figure~\ref{fig: youngdiagramphiaboveconvexhull}.
\begin{figure}
\centering
\label{fig: youngdiagramphiaboveconvexhull}
\includegraphics{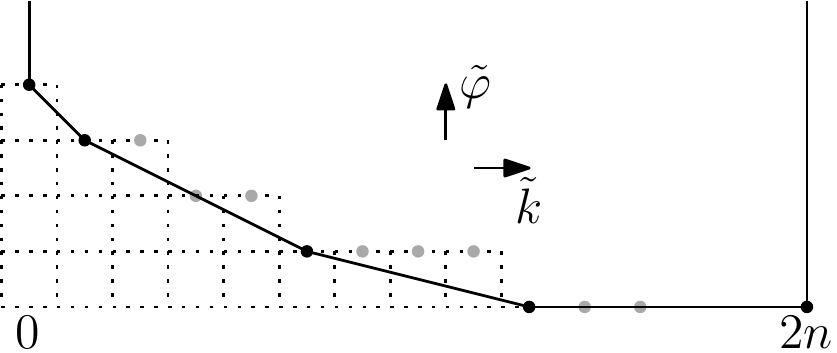}
\caption{Boundary of the convex hull of $\{(\tilde{k},\tilde{\varphi}):\tilde{k}=0,\ldots,2n,\tilde{\varphi}\in\mathbb{Z},\tilde{\varphi}\geq \varphi(\tilde{k})\}$}
\end{figure}
We will prove in Lemma~\ref{lem: asymptotics coeff char polynomial} that $c_{k}(t)=O(t^{\varphi(k)})$ as $t\to 0$. For general $k$ (corresponding to the grey dots), $\varphi(k)$ is not necessarily the exact order of $c_k(t)$. However, for those $k$ corresponding to the black dots, the order $\varphi(k)$ is exact and we calculate $\lim_{t\to 0}c_k(t)t^{-\varphi(k)}$ in \eqref{eq: ck asymp entire block} of Lemma~\ref{lem: asymptotics coeff char polynomial}.

Next, we sketch the proof of Theorem~\ref{thm: krein lyubarskii c1} a) from Lemma~\ref{lem: asymptotics coeff char polynomial}. We will carry out certain blow up analysis at $t=0$ and $\lambda=\lambda_0$. Take $w=\frac{\lambda-\lambda_0}{t^{\alpha}}$ for $\alpha>0$\footnote{For $\alpha=0$, the following argument simply yields the continuity of the eigenvalues of $\gamma(t)$ as $t$ varies from $0$.}. After a change of variable, we obtain another polynomial $q_{\alpha}(w,t)$ from $p(\lambda,t)$, where $q_{\alpha}(w,t)=\sum_{k=0}^{2n}c_{k}(t)t^{\alpha k}w^{k}$. Note that $\lim_{t\to 0}q_{\alpha}(w,t)=0$. To obtain a non-trivial limit, we need to divide $q_{\alpha}(w,t)$ by $t^{\beta(\alpha)}$, where $\beta(\alpha)=\min\{\varphi(k)+\alpha k:k=0,\ldots,2n\}$. We are interested in the limiting polynomial
\[r_{\alpha}(w)=\sum_{k=0}^{2n}\lim_{t\to 0}c_k(t)t^{\alpha k-\beta(\alpha)}\cdot w^{k}.\]
In order to obtain $\lim_{t\to 0}\frac{\lambda(t)-\lambda_0}{t^{\alpha}}$ by using Lemma~\ref{lem: continuity of roots of general polynomials}, we need to answer the following questions: does $r_{\alpha}(w)$ vanish? If not, how to describe the roots of $r_{\alpha}(w)$?

Note that the possible minimizers of $\varphi(k)+\alpha k$ are important to us since
\[\lim_{t\to 0}c_{k}(t)t^{\alpha k-\beta(\alpha)}=\left\{\begin{array}{ll}
0 & \text{ if }k\text{ is not a minimizer,}\\
\lim\limits_{t\to 0}c_{k}(t)t^{-\varphi(k)} & \text{ if }k\text{ is a minimizer}.
\end{array}\right.\]
Denote by $L_{\alpha}$ the line through the origin with the slope $-\alpha$. To find the minimizers, we translate $L_{\alpha}$ upwards until $L_{\alpha}$ has non-empty intersection with the graph of $\varphi(k)$ for the first time. The $k$-coordinates of the intersection points are precisely the minimizers. The intersection must contain black points since the black points are extremal points of the convex hull of the discrete domain above the graph of $\varphi$, see Figure~\ref{fig: youngdiagramphiaboveconvexhull}. For the $k$-coordinates of the black intersection points, the limit $\lim_{t\to 0}c_{k}(t)t^{\alpha k-\beta(\alpha)}=\lim_{t\to 0}c_{k}(t)t^{-\varphi(k)}\neq 0$. In particular, $r_{\alpha}(w)\not\equiv 0$.

When $\frac{1}{\alpha}$ is different from the sizes of Jordan blocks associated with $\lambda_0$, the minimizer $k_{\min}$ is the single black intersection point, see Figure~\ref{fig: youngdiagramphiplusgenericalphak} for $\alpha=\frac{3}{4}$ and the same sizes of Jordan blocks as in Figure~\ref{fig: youngdiagram}.
\begin{figure}
\begin{minipage}[b]{0.45\textwidth}
\label{fig: youngdiagramphiplusgenericalphak}
\includegraphics[width=0.95\textwidth]{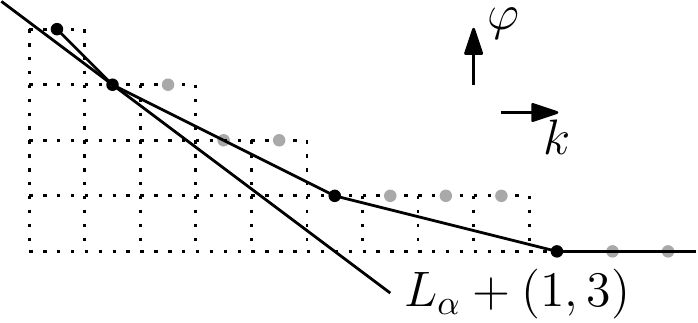}
\caption{Generic intersection}
\end{minipage}
\begin{minipage}[b]{0.45\textwidth}
\label{fig: youngdiagramphiplushalfk}
\includegraphics[width=0.95\textwidth]{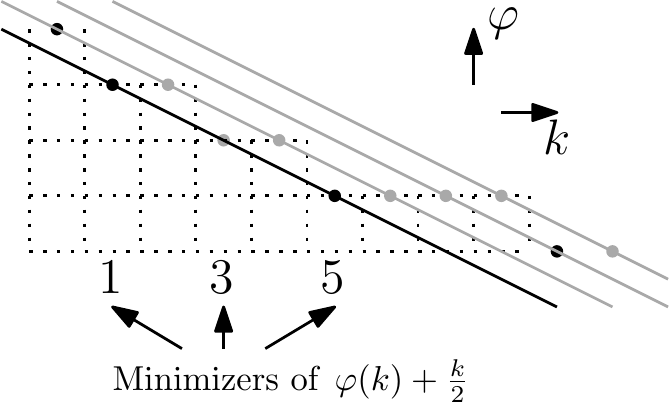}
\caption{$\alpha=\frac{1}{2}$}
\end{minipage}
\end{figure}
In this case, the limiting polynomial $r_{\alpha}(w)$ consists of a single term and its roots must be zero. When $\frac{1}{\alpha}$ equals the size of a Jordan block associated with $\lambda_0$, there are exactly $m(\alpha)+1$ many minimizers, where $m(\alpha)$ equals the number of Jordan blocks (associated with $\lambda_0$) of the size $\frac{1}{\alpha}$, see Figure~\ref{fig: youngdiagramphiplushalfk}. The minimizers have equal distance $\frac{1}{\alpha}$ between each other (since the intermediate grey points separate Jordan blocks of the same size). By Lemma~\ref{lem: asymptotics coeff char polynomial}, the coefficients of the limiting polynomial $r$ correspond to the sum of certain principle minors. Finally, we write $r$ as certain determinant and the asymptotics of eigenvalues are determined by the sizes of Jordan blocks and the roots of the limiting polynomial $r$. For instance, in Figure~\ref{fig: youngdiagramphiplushalfk}, the sizes of Jordan blocks are $4$, $2$, $2$ and $1$, $\alpha=\frac{1}{2}$ corresponds to the Jordan blocks of size $2$ and $N=9$. The minimizers are $1$, $3$ and $5$ and $\varphi(1)=3$, $\varphi(3)=2$ and $\varphi(5)=1$. By Lemma~\ref{lem: asymptotics coeff char polynomial}, the limiting polynomial
\begin{align*}
r_{\frac{1}{2}}(w)=&\sum_{k=1,3,5}\lim_{t\to 0}c_{k}(t)t^{-\varphi(k)}\cdot w^{k}\\
=&c_{9}(0)w\left(\begin{vmatrix}
d_{1,1} & d_{1,2} & d_{1,3}\\
d_{2,1} & d_{2,2} & d_{2,3}\\
d_{3,1} & d_{3,2} & d_{3,3}
\end{vmatrix}+w^2\left(\begin{vmatrix}
d_{1,1} & d_{1,2}\\
d_{2,1} & d_{2,2}
\end{vmatrix}+\begin{vmatrix}
d_{1,1} & d_{1,3}\\
d_{3,1} & d_{3,3}
\end{vmatrix}\right)+w^{4}d_{1,1}\right)\\
=&c_9(0)w\begin{vmatrix}
d_{1,1} & d_{1,2} & d_{1,3}\\
d_{2,1} & d_{2,2}+w^2 & d_{2,3}\\
d_{3,1} & d_{3,2} & d_{3,3}+w^2
\end{vmatrix},
\end{align*}
where the $4\times 4$ matrix $d$ equals $SX^{-1}\Lambda$, where $\Lambda$ is a diagonal matrix with diagonal elements $-\lambda_0^4$, $\lambda_0^2$, $\lambda_0^2$ and $\sqrt{-1}\lambda_0$ from the top to the bottom. (Recall the definition of $S$ and $X$ in \eqref{eq: defn S Cxixi} and \eqref{eq: defn X xiJxi}.) A non-trivial root of $r_{\frac{1}{2}}(w)$ corresponds to a non-zero finite limit of $\lim_{t\to 0}\frac{\lambda(t)-\lambda_0}{t^{\frac{1}{2}}}$ where $\lambda(t)$ is an eigenvalue of $\gamma(t)$. It is H\"{o}lder-$\frac{1}{2}$ continuous at $t=0$. The trivial root $0$ of $r_{\frac{1}{2}}(w)$ corresponds to the zero limit of $\lim_{t\to 0}\frac{\lambda(t)-\lambda_0}{t^{\frac{1}{2}}}$, where $\lambda(t)$ corresponds to certain Jordan block of strictly smaller size and has better regularity at $t=0$. The non-trivial roots of $r_{\alpha}(w)$ are important and they are also the roots of the polynomial $Q(w)$, where
\begin{equation}\label{eq: Q(w) special case}
Q(w)=\begin{vmatrix}
d_{1,1} & d_{1,2} & d_{1,3}\\
d_{2,1} & d_{2,2}+w^2 & d_{2,3}\\
d_{3,1} & d_{3,2} & d_{3,3}+w^2
\end{vmatrix}.
\end{equation}
Write the matrices $d$, $S$, $X$ and $\Lambda$ as in \eqref{eq: SX blocks} with $s=3$:
\begin{align*}
&d=\left(\begin{array}{c|cc|c}
d_{1,1} & d_{1,2} & d_{1,3} & d_{1,4}\\
\hline
d_{2,1} & d_{2,2} & d_{2,3} & d_{2,4}\\
d_{3,1} & d_{3,2} & d_{3,3} & d_{3,4}\\
\hline
d_{4,1} & d_{4,2} & d_{4,3} & d_{4,4}
\end{array}\right),\quad &\Lambda=\left(\begin{array}{c|cc|c}
-\lambda_0^4 & 0 & 0 & 0\\
\hline
0 & \lambda_0^2 & 0 & 0\\
0 & 0 & \lambda_0^2 & 0\\
\hline
0 & 0 & 0 & \sqrt{-1}\lambda_0
\end{array}\right),\\
&S=\left(\begin{array}{c|cc|c}
S_{1,1} & S_{1,2} & S_{1,3} & S_{1,4}\\
\hline
S_{2,1} & S_{2,2} & S_{2,3} & S_{2,4}\\
S_{3,1} & S_{3,2} & S_{3,3} & S_{3,4}\\
\hline
S_{4,1} & S_{4,2} & S_{4,3} & S_{4,4}
\end{array}\right),\quad &X=\left(\begin{array}{c|cc|c}
X_{1,1} & X_{1,2} & X_{1,3} & X_{1,4}\\
\hline
0 & X_{2,2} & X_{2,3} & X_{2,4}\\
0 & X_{3,2} & X_{3,3} & X_{3,4}\\
\hline
0 & 0 & 0 & X_{4,4}
\end{array}\right).
\end{align*}
By calculation in blocks, we get that
\[
\left(\begin{array}{c|cc}
d_{1,1} & d_{1,2} & d_{1,3}\\
\hline
d_{2,1} & d_{2,2} & d_{2,3}\\
d_{3,1} & d_{3,2} & d_{3,3}
\end{array}\right)=
\left(\begin{array}{c|cc}
S_{1,1} & S_{1,2} & S_{1,3}\\
\hline
S_{2,1} & S_{2,2} & S_{2,3}\\
S_{3,1} & S_{3,2} & S_{3,3}
\end{array}\right)\left(\begin{array}{c|cc}
X_{1,1} & X_{1,2} & X_{1,3}\\
\hline
0 & X_{2,2} & X_{2,3}\\
0 & X_{3,2} & X_{3,3}
\end{array}\right)^{-1}\left(\begin{array}{c|cc}
-\lambda_0^4 & 0 & 0\\
\hline
0 & \lambda_0^2 & 0\\
0 & 0 & \lambda_0^2
\end{array}\right).\]
Write the above equation by $\tilde{d}=\tilde{S}\tilde{X}^{-1}\tilde{\Lambda}$. Denote by $\tilde{I}$ the $3\times 3$ square matrix $\begin{pmatrix}
0 & 0\\
0 & \Id_2\end{pmatrix}$. Then, we have that
\begin{equation*}
Q(w)=\det(\tilde{d}+w^2\tilde{I})=\det(\tilde{S}\tilde{X}^{-1}\tilde{\Lambda}+w^2\tilde{I})=\frac{\det\tilde{\Lambda}}{\det\tilde{X}}\cdot\det(S+w^2\tilde{I}\tilde{\Lambda}^{-1}\tilde{X}).
\end{equation*}
Hence, the roots of $Q$ coincide with the root of $\tilde{Q}(w)$, where
\begin{equation}\label{eq: tilde Q(w) special case}
\tilde{Q}(w)=\det(S+w^2\tilde{I}\tilde{\Lambda}^{-1}\tilde{X})=\det\left\{\left(\begin{array}{c|cc}
S_{1,1} & S_{1,2} & S_{1,3}\\
\hline
S_{2,1} & S_{2,2} & S_{2,3}\\
S_{3,1} & S_{3,2} & S_{3,3}
\end{array}\right)+w^2\lambda_0^{-2}\left(\begin{array}{c|cc}
0 & 0 & 0\\
\hline
0 & X_{2,2} & X_{2,3}\\
0 & X_{3,2} & X_{3,3}
\end{array}\right)\right\}.
\end{equation}
The above method also works in general case as we shall see in Subsection~\ref{subsect: proof of thm KL a from lem asymp coeff}. In the formal proof, we will replace the geometric arguments by explicit and rigorous analysis.

We state and prove Lemma~\ref{lem: asymptotics coeff char polynomial} in Subsection~\ref{subsect: proof of lem asymp coeff char poly}, where we use the exterior powers of linear maps. We deduce Theorem~\ref{thm: krein lyubarskii c1} a) from Lemma~\ref{lem: asymptotics coeff char polynomial} in Subsection~\ref{subsect: proof of thm KL a from lem asymp coeff}. The reader may firstly skip the technical proof of Lemma~\ref{lem: asymptotics coeff char polynomial} and go directly to the proof of Theorem~\ref{thm: krein lyubarskii c1} a).

\subsection{Proof of Lemma~\ref{lem: asymptotics coeff char polynomial}}\label{subsect: proof of lem asymp coeff char poly}

\begin{lemma}\label{lem: asymptotics coeff char polynomial}
 Consider the solution $\gamma(t)\in \Sp(2n,\mathbb{R})$ of \eqref{eq: ham system R modified initial cont pos} without assuming \eqref{defn: weaker definiteness assumption}. Recall the notations \eqref{eq: gammaxi=lambda0xi-xi}, \eqref{eq: xiij basis}, \eqref{eq: xiij regroup}, \eqref{eq: defn S Cxixi}, \eqref{eq: defn X xiJxi} and \eqref{defn: ck expansion at lambda0}. Denote by $N=N(e^{\sqrt{-1}\theta_0})$ the dimension of the invariant space $E_{e^{\sqrt{-1}\theta_0}}(\gamma(0))$. (Note that $N=\sum_{i=1}^{m}j_i$.)
Then, we have that
\begin{equation}\label{eq: cN0}
c_{N}(0)=\lim_{\lambda\to e^{\sqrt{-1}\theta_0}}\frac{p(\lambda,0)}{(\lambda-e^{\sqrt{-1}\theta_0})^{N}}.
\end{equation}
For $k=0,\ldots,N-1$, as $t\to 0$,
 \begin{equation}\label{eq: expression of ck big O}
  c_k(t)=(-t)^{\varphi(k)}\cdot\bigwedge(2n,2n-k-\varphi(k),\varphi(k),e^{\sqrt{-1}\theta_0}\cdot\Id-\gamma(0),\frac{\mathrm{d\gamma}}{\mathrm{d}t}(0))+o(t^{\varphi(k)}),
 \end{equation}
 where
 \begin{align}\label{defn: varphi(k)}
  \varphi(k)=\varphi(k,\gamma(0))&\overset{\mathrm{def}}{=}\min\left\{i=1,\ldots,m:\sum_{i'=1}^{i}j_{i'}\geq N-k\right\}\notag\\
  &=\min\left\{i=1,\ldots,m:\sum_{i<i'\leq m}j_{i'}\leq k\right\}.
 \end{align}
  (Consider the Jordan blocks associated with the eigenvalue $e^{\sqrt{-1}\theta_0}$. Then, $\varphi(k)$ is precisely the minimal number of blocks such that their total size is not less than $N-k$. By definition, we have that $k\geq \sum_{i>\varphi(k)}j_i$.)
  In particular, when $k=\sum_{i>\varphi(k)}j_{i}$, as $t\to 0$,
  \begin{equation}\label{eq: expression of ck exact N-k}
    c_{k}(t)=(-1)^{N-k}t^{\varphi(k)}c_{N}(0)\sum_{I\in\mathcal{I}_{k}}\det\left[(d_{i,i'})_{i,i'\in I}\right]+o(t^{\varphi(k)}),
  \end{equation}
  where
  \[\mathcal{I}_{k}\overset{\mathrm{def}}{=}\left\{I\subset\{1,\ldots,m\}:\begin{array}{l}
  \sharp I=\varphi(k); \forall i\in I,j_i\geq j_{\varphi(k)}\\
  \text{and }\forall i\in\{1,\ldots,m\}\setminus I,j_i\leq j_{\varphi(k)}
  \end{array}\right\}\quad\footnote{To get the quantity on the right hand side of \eqref{eq: expression of ck exact N-k}, we select the biggest $\varphi(k)$-many Jordan blocks. However, due to possible presence of Jordan blocks of equal size, such a selection is not unique and $\mathcal{I}_k$ is introduced to represent all such choices.}\]
  and for $i,i'=1,\ldots,m$,
  \begin{equation}\label{eq: d diag S inv X}
   d_{i,i'}\overset{\mathrm{def}}{=}(-1)^{j_{i'}-1}(\sqrt{-1}e^{\sqrt{-1}\theta_0})^{j_{i'}}(SX^{-1})_{i,i'}.\quad\quad\footnote{By Corollary~\ref{cor: Xellell hermitian non-degenerate xell1ell2 vanishes}, $X$ is invertible.}
  \end{equation}
  Particularly, if $k=\sum_{\ell'>\ell}m_{\ell'}n_{\ell'}$ for some $\ell=1,\ldots,s$ (or equivalently, $\varphi(k)=\sum_{\ell'\leq\ell}m_{\ell'}$), we have that $\mathcal{I}_{k}=\left\{\{1,\ldots,\sum_{\ell'=1}^{\ell}m_{\ell'}\}\right\}$ and as $t\to 0$,
  \begin{equation}\label{eq: ck asymp entire block}
    c_k(t)=(-1)^{N-k}t^{\varphi(k)}c_{N}(0)\det\left[(d_{i,i'})_{i,i'=1,\ldots,\varphi(k)}\right]+o(t^{\varphi(k)}).
  \end{equation}
  \end{lemma}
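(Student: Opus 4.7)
The plan is to expand the characteristic polynomial via the exterior power formula \eqref{eq: char polynomial three terms}, viewed as an identity of polynomials in the entries of $\gamma$, and then use a Jordan-basis argument to locate the lowest surviving order in the Taylor expansion at $\gamma=\gamma(0)$. Writing $\lambda_0:=e^{\sqrt{-1}\theta_0}$, each coefficient $c_k(\gamma)$ of $(\lambda-\lambda_0)^k$ in $\det(\lambda\Id-\gamma)$ is a polynomial in $\gamma$ whose degree-$k_2$ homogeneous component at $\gamma(0)$ equals $(-1)^{k_2}\,\bigwedge(2n,2n-k-k_2,k_2,\lambda_0\Id-\gamma(0),\gamma-\gamma(0))$. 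Formula~\eqref{eq: cN0} follows immediately from the $G$-orthogonal splitting $\mathbb{C}^{2n}=E_{\lambda_0}\oplus F_{\lambda_0}$ of Lemma~\ref{lem: G othorgonal invariant spaces}, which gives $p(\lambda,0)=(\lambda-\lambda_0)^N\det(\lambda\Id-\gamma(0)|_{F_{\lambda_0}})$ near $\lambda_0$. Once one shows that the polynomial $B\mapsto\bigwedge(2n,2n-k-k_2,k_2,\lambda_0\Id-\gamma(0),B)$ is identically zero for every $k_2<\varphi(k)$, the polynomial $\gamma\mapsto c_k(\gamma)$ has a zero of order $\geq\varphi(k)$ at $\gamma(0)$; substituting $\gamma(t)-\gamma(0)=t\gamma'(0)+o(t)$ into each surviving degree-$k_2$ component, homogeneity of degree $k_2\geq\varphi(k)$ together with continuity yields the asymptotic $c_k(t)=(-t)^{\varphi(k)}W_{\varphi(k)}+o(t^{\varphi(k)})$ with $W_{\varphi(k)}=\bigwedge(2n,2n-k-\varphi(k),\varphi(k),\lambda_0\Id-\gamma(0),\gamma'(0))$, i.e.\ \eqref{eq: expression of ck big O}.

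The identical vanishing is proved in a basis adapted to $\mathbb{C}^{2n}=E_{\lambda_0}\oplus F_{\lambda_0}$: the Jordan basis $\{\xi_{i,j}\}$ on $E_{\lambda_0}$ together with any basis of $F_{\lambda_0}$. On $F_{\lambda_0}$ the operator $\lambda_0\Id-\gamma(0)$ is invertible; on $E_{\lambda_0}$ it acts as the nilpotent $\xi_{i,j}\mapsto\xi_{i,j-1}$ (convention $\xi_{i,0}=0$). For an arbitrary $B$, decompose each $Bv$ into its $E_{\lambda_0}$- and $F_{\lambda_0}$-components; by multilinearity the $2n$-fold wedge splits as a top wedge on $E_{\lambda_0}$ times a top wedge on $F_{\lambda_0}$, and only configurations with exactly $N$ columns lying in $E_{\lambda_0}$ contribute. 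In the $E_{\lambda_0}$-determinant, linear independence forces, within each Jordan chain, the positions acted on by $\lambda_0\Id-\gamma(0)$ to form an initial segment starting from the bottom, and every chain that is "completely consumed" must have its top $\xi_{i,1}$ replaced by a $B$-insertion. This is precisely the Young-diagram filling described before the statement, and by~\eqref{defn: varphi(k)} the minimal number of $B$-insertions required to cover the $N-k$ vacancies equals $\varphi(k)$; since this lower bound is independent of $B$, the wedge vanishes for every $B$ when $k_2<\varphi(k)$.

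For the leading coefficient in the entire-block case $k=\sum_{i>\varphi(k)}j_i$, every surviving configuration in $W_{\varphi(k)}$ is indexed by a choice $I\in\mathcal{I}_k$ of $\varphi(k)$ Jordan blocks of maximal admissible size, with all positions consumed by $\lambda_0\Id-\gamma(0)$ except that the top $\xi_{i,1}$ of each chosen chain is replaced by the $E_{\lambda_0}$-component of $\gamma'(0)\xi_{i,1}$. Using $\gamma'(0)=J_{2n}A(0)\gamma(0)$ and $\gamma(0)\xi_{i,1}=\lambda_0\xi_{i,1}$, the pairing of $\gamma'(0)\xi_{i,1}$ against the dual basis (the tops $\xi_{i',j_{i'}}$ of the remaining chains, paired through $(\cdot,\cdot)_G$) produces $\sqrt{-1}\lambda_0\langle A(0)\xi_{i,1},\xi_{i',1}\rangle=\sqrt{-1}\lambda_0\,S_{i,i'}$ by~\eqref{eq: defn S Cxixi}; Lemma~\ref{lem: vector G products in same invariant space} and Corollary~\ref{cor: Xellell hermitian non-degenerate xell1ell2 vanishes} (which also ensures the invertibility of $X$) identify the coordinates of that dual basis in the Jordan basis with entries of $X^{-1}$; converting $\xi_{i,j}$ to $\eta_{i,j}$ via~\eqref{eq: definition of etaij} produces the prefactors $(-1)^{j_{i'}-1}(\sqrt{-1}\lambda_0)^{j_{i'}}$ of~\eqref{eq: d diag S inv X}; and the global sign $(-1)^{N-k}$ together with the factor $c_N(0)$ from the $F_{\lambda_0}$-wedge are read off from~\eqref{eq: char polynomial three terms}. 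Summing over $I\in\mathcal{I}_k$ yields~\eqref{eq: expression of ck exact N-k} and, when $|\mathcal{I}_k|=1$, the sharpened statement~\eqref{eq: ck asymp entire block}. The principal obstacle is the combinatorial step of the previous paragraph: isolating exactly which $(S_1,S_2)$-configurations survive in the top $E_{\lambda_0}$-determinant and proving that the minimum number of $B$-insertions is precisely $\varphi(k)$ (rather than a slightly smaller value) in a $B$-independent way requires a careful induction on the Young-diagram shape together with the $G$-orthogonality of distinct invariant spaces from Lemma~\ref{lem: G othorgonal invariant spaces}.
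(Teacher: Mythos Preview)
Your proposal is correct and follows essentially the same route as the paper's proof: expand $c_k$ via the exterior-power identity \eqref{eq: char polynomial three terms}, work in a Jordan basis of $E_{\lambda_0}$ completed by a basis of $F_{\lambda_0}$, show by a combinatorial argument on the assignments of $\Id$, $\lambda_0\Id-\gamma(0)$, and $B$ to the basis vectors that every term with fewer than $\varphi(k)$ copies of $B$ vanishes, and then read off the leading coefficient using $\gamma'(0)=J_{2n}A(0)\gamma(0)$ together with the $G$-orthogonality relations of Lemma~\ref{lem: vector G products in same invariant space}. The only cosmetic difference is that you phrase the vanishing as ``$c_k(\gamma)$ has a zero of order $\ge\varphi(k)$ at $\gamma(0)$'' while the paper factors $t$ out of $B=\gamma(t)-\gamma(0)$ first; your closing remark slightly overstates the combinatorial difficulty, since the paper dispatches it by a direct count (if a block receives no $B$-insertion it must receive only $\Id$'s, so $N_0\ge\sum_{i>N_2}j_i$), without any induction on the Young shape.
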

\begin{remark}
 Lemma~\ref{lem: asymptotics coeff char polynomial} is valid without \eqref{defn: weaker definiteness assumption}. However, to get the exact order of asymptotics, we need to ensure that the determinant in \eqref{eq: ck asymp entire block} does not vanish, which follows from \eqref{defn: weaker definiteness assumption}.
\end{remark}

 \begin{proof}[Proof of Lemma~\ref{lem: asymptotics coeff char polynomial}]
 Note that \[p(\lambda,0)=\det(\lambda\cdot\Id-\gamma(0))=(\lambda-e^{\sqrt{-1}\theta_0})^{N}\prod_{\mu\neq e^{\sqrt{-1}\theta_0}}((\lambda- e^{\sqrt{-1}\theta_0})+( e^{\sqrt{-1}\theta_0}-\mu)),\]
 where $\mu$ is an eigenvalue of $\gamma(0)$. Comparing this with the expansion of $p(\lambda,0)$ at $e^{\sqrt{-1}\theta_0}$ in \eqref{defn: ck expansion at lambda0}, we conclude that $c_k(0)=0$ for $k=0,\ldots,N-1$ and $c_{N}(0)$ is given by \eqref{eq: cN0}. Next, we will estimate $c_k(t)$ for $k=0,\ldots,N-1$. We will expand $c_k(t)$ by using exterior powers of linear maps, identify and calculate the major terms. For simplicity of notation, we give the proof for $N=n$ and $e^{\sqrt{-1}\theta_0}\neq \pm 1$. The argument for the general case is quite similar. We briefly explain necessary modifications in Remark~\ref{rem: necessary modification for N neq n} and omit the details.

 In this case, we see that
  \[c_{N}(0)=(e^{\sqrt{-1}\theta_{0}}-e^{-\sqrt{-1}\theta_0})^n.\]
  Recall the definitions and \eqref{eq: char polynomial three terms} in Subsection~\ref{subsect: exterior powers of linear maps}. Note that
 \begin{equation}\label{eq: expression of ck by ck1k2}
  c_k(t)=\sum_{\substack{k_1+k_2=2n-k,\\k_1,k_2\geq 0}}(-1)^{k_2}t^{k_2}c_{k_1,k_2}(t),
 \end{equation}
 where
 \[c_{k_1,k_2}(t)=\bigwedge(2n,k_1,k_2,e^{\sqrt{-1}\theta_0}\cdot\Id-\gamma(0),\frac{1}{t}(\gamma(t)-\gamma(0))).\]
 Note that $t\mapsto c_{k_1,k_2}(t)$ is continuous and
 \[c_{k_1,k_2}(0)=\bigwedge(2n,k_1,k_2,e^{\sqrt{-1}\theta_0}\cdot\Id-\gamma(0),\frac{\mathrm{d}}{\mathrm{d} t}\gamma(0)).\]

To calculate $c_k(t)$ and $c_{k_1,k_2}(t)$, we need to fix a basis of $\mathbb{C}^{2n}$. Recall the notations given by \eqref{eq: gammaxi=lambda0xi-xi} and \eqref{eq: xiij basis}. Then, $\sum_{i=1}^{m}j_i=N=n$. By taking complex conjugates, we see that $\{\bar{\xi}_{i,j}\}_{i,j}$ is a basis of the invariant space $E_{e^{-\sqrt{-1}\theta_0}}(\gamma(0))$ associated with the eigenvalue $e^{-\sqrt{-1}\theta_0}$ of the matrix $\gamma(0)$ with properties similar to \eqref{eq: gammaxi=lambda0xi-xi}. Moreover, by Lemma~\ref{lem: G othorgonal invariant spaces} and non-degeneracy of $(\cdot,\cdot)_{G}$, $\{\xi_{i,j},\bar{\xi}_{i,j}\}_{i,j}$ is a basis of $\mathbb{C}^{2n}$.

Before we proceed with the expansion of $c_{k_1,k_2}(t)$, let us firstly fix several notations. We define $M_0=\Id$, $M_1=e^{\sqrt{-1}\theta_0}\cdot\Id-\gamma(0)$ and $M_2=\frac{1}{t}(\gamma(t)-\gamma(0))$. Let $P=\{(i,j):i=1,\ldots,m;j=1,\ldots,j_i\}$. Then, the generalized eigenvectors $\{\xi_{i,j}\}_{i,j}$ are indexed by $P$. We fix the lexicographic order on $P$ so that $P$ is totally ordered. In the definition of $c_{k_1,k_2}(t)$, for each vector $\xi_{p}$ ($p\in P$), we apply to it some linear map selected from the three different linear maps $M_0$, $M_1$ and $M_2$, and then multiply the resulting vectors via wedge products. Let $\Omega=\{0,1,2\}^{P}$. Then, the choice of linear maps is represented by an element in $\Omega$. For instance, for $\sigma=(\sigma_p)_{p\in P}\in\Omega$, for a vector $\xi_{p}$, we apply to it the map $M_{\sigma_{p}}$. For the vectors $\{\bar{\xi}_{p}\}_{p\in P}$, we use the similar notations $\tilde{\sigma}$. In the definition of $c_{k_1,k_2}(t)$, we don't sum over all possible assignment $\sigma,\tilde{\sigma}\in\Omega$. The requirement is that we use $k_1$ times the map $M_1$, $k_2$ times the map $M_2$ and $2n-k_1-k_2$ times the map $M_0$. To count the number of occurrence of a particular map $M_i$ ($i=0,1,2$), we introduce the following notation: for $\sigma\in \Omega$, a subset of indices $Q\subset P$ and $\alpha=0,1,2$, we define
 \[N_{\alpha}(\sigma,Q)=\sum_{p\in Q}1_{\sigma_p=\alpha}.\]
For $q_1+q_2\leq n$, we define
 \[\Omega_{q_1,q_2}\overset{\mathrm{def}}{=}\{\sigma\in\Omega:N_1(\sigma,P)=q_1,N_2(\sigma,P)=q_2\}.\]
Then, we express $c_{k_1,k_2}(t)$ as follows:
\begin{equation*}
\sum_{\substack{q_1+\tilde{q}_1=k_1,\\ q_2+\tilde{q}_2=k_2}}\sum_{\substack{\sigma\in\Omega_{q_1,q_2},\\
\tilde{\sigma}\in\Omega_{\tilde{q}_1,\tilde{q}_2}}}\left(\wedge_{p\in P}M_{\sigma_p}\xi_{p}\right)\wedge\left(\wedge_{p\in P}M_{\tilde{\sigma}_p}\bar{\xi}_{p}\right)=c_{k_1,k_2}(t)\left(\wedge_{p\in P}\xi_{p}\right)\wedge\left(\wedge_{p\in P}\bar{\xi}_{p}\right).
\end{equation*}
At the first sight, the above expression may seem to be impractical as it evolves lots of terms. However, not all the terms in the above summation contribute to $c_{k_1,k_2}(t)$. For instance, if we apply $M_1$ to an eigenvector associated with the eigenvalue $e^{\sqrt{-1}\theta_0}$ of $\gamma(0)$, then we immediately get a zero. The other possibility to get a zero contribution is due to the skew-symmetry of the wedge product. For instance, for an eigenvector $v_1$ and a generalized eigenvector $v_2$ such that $M_1v_2=v_1$ and $M_1v_1=0$, we see that $M_1v_2\wedge M_0v_1=0$. We will combine these two observations and give a necessary condition for non-trivial contributions. For $i=1,\ldots,m$, we define $P_i=\{(i,1),\ldots,(i,j_i)\}$ with the lexicographic order. The index set $P_i$ corresponds to the generalized eigenvectors associated with the $i$-th Jordan block. Note that for $i=1,\ldots,m$, we have that $\wedge_{p\in P_i}M_{\sigma_p}\xi_{p}=0$ if $N_2(\sigma,P_i)=0$ and $N_1(\sigma,P_i)\geq 1$. So, roughly speaking, in order that the term $\left(\wedge_{p\in P}M_{\sigma_p}\xi_{p}\right)\wedge\left(\wedge_{p\in P}M_{\tilde{\sigma}_p}\bar{\xi}_{p}\right)$ is not vanishing, the following condition is necessary: for the generalized eigenvectors corresponding to some Jordan block, if we don't apply $M_2$ to them, then we have to apply $M_0$ to all these vectors. In this sense, we need certain minimal amount of $M_0$ available. To be more precise, if the number of $M_2$ available is strictly less than the total number $m$ of the Jordan blocks associated with $e^{\sqrt{-1}\theta_0}$, then at least $m-N_2(\sigma,P)$ blocks are free of $M_2$ and we have to apply $M_0$ to all the corresponding generalized eigenvectors. The minimum of the total size of $m-N_2(\sigma,P)$ many Jordan blocks is $\sum_{i>N_2(\sigma,P)}j_i$. Hence, in order to get non-zero contribution, we need that $N_0(\sigma,P)\geq \sum_{i>N_2(\sigma,P)}j_i$. Noting that $N_2(\sigma,P)\leq k_2$ and $2n-k_1-k_2=N_0(\sigma,P)+N_0(\tilde{\sigma},P)\geq N_0(\sigma,P)$, we need that $2n-k_1-k_2\geq \sum_{i>k_2}j_i$, which is equivalent to $k_2\geq \varphi(2n-k_1-k_2)$. Hence, for $k=2n-k_1-k_2$, we have that
\begin{equation}\label{eq: ck1k2 vanishes}
 c_{k_1,k_2}(t)=0\text{ if }k_2<\varphi(k).
\end{equation}
By \eqref{eq: expression of ck by ck1k2} and \eqref{eq: ck1k2 vanishes}, for $k=0,\ldots,n-1$, as $t\to 0$,
 \begin{equation}\label{eq: expression of ck by ck1k2 with special k}
  c_k(t)=\sum_{k_2=\varphi(k)}^{2n-k}(-t)^{k_2}c_{2n-k-k_2,k_2}(t)=(-t)^{\varphi(k)}c_{2n-k-\varphi(k),\varphi(k)}(0)+o(t^{\varphi(k)}),
 \end{equation}
 which is precisely Equation~\eqref{eq: expression of ck big O}.

 Next, we will calculate $c_{2n-k-\varphi(k),\varphi(k)}(0)$ when $k=\sum_{i>\varphi(k)}j_i$. For simplicity of notation, let $K_0=e^{\sqrt{-1}\theta_0}\cdot\Id-\gamma(0)$ and $\Delta_0=\frac{\mathrm{d}\gamma}{\mathrm{d} t}(0)$. (We decide to abandon the use of notations $M_0$, $M_1$ and $M_2$ since we would like to emphasize the difference between $K_0$ and $\Delta_0$.) We have that
 \begin{equation*}
  c_{2n-k-\varphi(k),\varphi(k)}(0)=\bigwedge(2n,2n-k-\varphi(k),\varphi(k),K_0,\Delta_0),
 \end{equation*}
 which can be expanded as before. From previous discussion above \eqref{eq: ck1k2 vanishes}, to get non-zero contributions, there aren't many choices for the assignments of the maps $\Id$, $K_0$ and $\Delta_0$: for the vectors $\bar{\xi}_{i,j}$, we apply $K_0$ to them; for the generalized eigenvectors of the biggest $\varphi(k)$ Jordan blocks associated with $e^{\sqrt{-1}\theta_0}$, we apply $\Delta_0$ to each eigenvector and $K_0$ to the remainder so that we use only one $\Delta_0$ for each big Jordan blocks; for the generalized eigenvectors of the remainder small Jordan blocks associated with $e^{\sqrt{-1}\theta_0}$, we apply the map $\Id$ to them. Accordingly, we have that
  \begin{equation}\label{eq: expression of ck1k201}
   \sum_{I\in\mathcal{I}_{k}}\left(\wedge_{i=1}^{m}\omega_{i,I}\right)\wedge\left(\wedge_{p\in P}K_0\bar{\xi}_{p}\right)=c_{2n-k-\varphi(k),\varphi(k)}(0)\left(\wedge_{p\in P}\xi_{p}\right)\wedge\left(\wedge_{p\in P}\bar{\xi}_{p}\right),
 \end{equation}
 where $\mathcal{I}_k$ represents different choices of the biggest $\varphi(k)$ many Jordan blocks and
 \[\omega_{i,I}=1_{i\in I}\cdot\Delta_0\xi_{i,1}\wedge(\wedge_{j=2}^{j_i}K_0\xi_{i,j})+1_{i\notin I}\cdot\wedge_{j=1}^{j_i}\xi_{i,j}.\]
 By \eqref{eq: gammaxi=lambda0xi-xi}, for $i=1,\ldots,m$ and $j=1,\ldots,j_i$, we have that
 \[K_0\xi_{i,j}=\xi_{i,j-1}\text{ and }K_0\bar{\xi}_{i,j}=(e^{\sqrt{-1}\theta_0}-e^{-\sqrt{-1}\theta_0})\bar{\xi}_{i,j}+\bar{\xi}_{i,j-1}\]
 where $\xi_{i,0}=0$. Hence, we have that
 \begin{equation}\label{eq: expression of wedgeK0barxip}
  \wedge_{p\in P}K_0\bar{\xi}_{p}=(e^{\sqrt{-1}\theta_0}-e^{-\sqrt{-1}\theta_0})^{n}\cdot\wedge_{p\in P}\bar{\xi}_{p}=c_{N}(0)\cdot\wedge_{p\in P}\bar{\xi}_p
 \end{equation}
 and that
 \begin{equation*}
  \omega_{i,I}=1_{i\in I}\cdot\Delta_0\xi_{i,1}\wedge(\wedge_{j=1}^{j_i-1}\xi_{i,j})+1_{i\notin I}\cdot\wedge_{j=1}^{j_i}\xi_{i,j}.
 \end{equation*}
 The vector $\Delta_0\xi_{i,1}$ can be uniquely expressed as a linear combination of the basis $(\xi_{i,j},\bar{\xi}_{i,j})_{i,j}$. We denote by $\tilde{d}_{i,i'}$ the coefficient of $\Delta_0\xi_{i,1}$ before $\xi_{i',j_{i'}}$. Denote by $\mathcal{S}_{I}$ all permutations of the set $I\subset\{1,\ldots,m\}$ and by $\Sgn(g)$ the signature of a permutation $g$. Then, we have that
 \begin{align*}
  \wedge_{i=1}^{m}w_{i,I}=&\sum_{g\in\mathcal{S}_{I}}\wedge_{i=1}^{m}(1_{i\in I}\cdot(-1)^{j_i-1}\cdot\tilde{d}_{i,g(i)}\cdot(\wedge_{j=1}^{j_i-1}\xi_{i,j})\wedge \xi_{g(i),j_{g(i)}}+1_{i\notin I}\cdot\wedge_{j=1}^{j_i}\xi_{i,j})\quad (\text{mod }\wedge_{p\in P}\bar{\xi}_p)\notag\\
  =&(-1)^{\sum_{i\in I}(j_i-1)}\sum_{g\in\mathcal{S}_{I}}(-1)^{\Sgn(g)}\prod_{i\in I}\tilde{d}_{i,g(i)}\cdot\wedge_{i=1}^{m}(\wedge_{j=1}^{j_i}\xi_{i,j})\quad (\text{mod }\wedge_{p\in P}\bar{\xi}_p)\notag\\
  =&(-1)^{\sum_{i\in I}(j_i-1)}\cdot\det(\tilde{d}_{i,i'})_{i,i'\in I}\cdot\wedge_{i=1}^{m}(\wedge_{j=1}^{j_i}\xi_{i,j})\quad (\text{mod }\wedge_{p\in P}\bar{\xi}_p)
 \end{align*}
 By definition of $\mathcal{I}_{k}$, for $k=\sum_{i>\varphi(k)}j_i$ and $I\in\mathcal{I}_{k}$, we have that $\sharp I=\varphi(k)$ and $\sum_{i\in I}(j_i-1)=N-k-\varphi(k)$. Hence, we obtain that
 \begin{equation}\label{eq: expression of wedge omegaiI}
  \wedge_{i=1}^{m}w_{i,I}=(-1)^{N-k-\varphi(k)}\cdot\det(\tilde{d}_{i,i'})_{i,i'\in I}\cdot\wedge_{i=1}^{m}(\wedge_{j=1}^{j_i}\xi_{i,j})\quad (\text{mod }\wedge_{p\in P}\bar{\xi}_p).
 \end{equation}
 Next, we will show that $\tilde{d}_{i,i'}$ equals $d_{i,i'}$ defined by \eqref{eq: d diag S inv X}. On one hand, since $\Delta_0=J_{2n}A(0)\gamma(0)$, $J_{2n}^{*}J_{2n}=\Id_{2n}$ and $\gamma(0)\xi_{i,1}=e^{\sqrt{-1}\theta_0}\xi_{i,1}$, we have that
 \begin{equation}\label{eq: Delta0xii1xiip1od}
 \langle\Delta_0\xi_{i,1},J_{2n}\xi_{i',1}\rangle=\langle J_{2n}A(0)\gamma(0)\xi_{i,1},J_{2n}\xi_{i',1}\rangle=e^{\sqrt{-1}\theta_0}\langle A(0)\xi_{i,1},\xi_{i',1}\rangle=e^{\sqrt{-1}\theta_0}S_{i,i'}.
 \end{equation}
 On the other hand, by Lemmas~\ref{lem: G othorgonal invariant spaces} and \ref{lem: vector G products in same invariant space}, we see that $\langle \bar{\xi}_{i,j},J_{2n}\xi_{i',1}\rangle=0$ for all $i,i'=1,\ldots,m$ and $j=1,\ldots,j_i$, and that $\langle\xi_{i,j},J_{2n}\xi_{i',1}\rangle=0$ for all $i=1,\ldots,m$ and $j=1,\ldots,j_{i}-1$. Hence, together with the definition of the matrix $X$ given by \eqref{eq: defn X xiJxi}, we get that
 \begin{equation}\label{eq: Delta0xii1xiip1wd}
 \langle\Delta_0\xi_{i,1},J_{2n}\xi_{i',1}\rangle=\sum_{i''=1}^{m}\tilde{d}_{i,i''}\langle\xi_{i'',j_{i''}},J_{2n}\xi_{i',1}\rangle=e^{\sqrt{-1}\theta_0}\sum_{i''=1}^{m}(-1)^{1-j_{i''}}(\sqrt{-1}e^{\sqrt{-1}\theta_0})^{-j_{i''}}\tilde{d}_{i,i''}X_{i'',i'}.
 \end{equation}
 Combining \eqref{eq: Delta0xii1xiip1od} and \eqref{eq: Delta0xii1xiip1wd}, we see that the expression of $\tilde{d}$ is given by \eqref{eq: d diag S inv X}.

 Together with \eqref{eq: expression of ck1k201}, \eqref{eq: expression of wedgeK0barxip} and \eqref{eq: expression of wedge omegaiI}, we get that
 \begin{equation}\label{eq: expression of ck1k202}
  c_{2n-k-\varphi(k),\varphi(k)}(0)=c_{N}(0)(-1)^{N-k-\varphi(k)}\sum_{I\in\mathcal{I}_{k}}\det\left[(d_{i,i'})_{i,i'\in I}\right],
 \end{equation}
 where $d$ is given by \eqref{eq: d diag S inv X}. Then, \eqref{eq: expression of ck exact N-k} follows from \eqref{eq: expression of ck by ck1k2 with special k} and \eqref{eq: expression of ck1k202}.
 Particularly, when $k=\sum_{\ell'>\ell}m_{\ell'}n_{\ell'}$ for some $\ell=1,\ldots,s$, we have that $\mathcal{I}_{k}=\{\{1,\ldots,\varphi(k)\}\}$ and \eqref{eq: ck asymp entire block} follows.
\end{proof}

The above proof is written for the case $N=n$. We briefly explain the modifications for $N\neq n$ in the following remark.
\begin{remark}\label{rem: necessary modification for N neq n}
 Instead of the eigenvectors $\{\bar{\xi}_{i,j}\}_{i,j}$, for each eigenvalue $\mu\neq e^{\sqrt{-1}\theta_0}$ with algebraic multiplicity $N(\mu)$, we take generalized eigenvectors $\{\xi^{(\mu)}_k\}_{k=1,\ldots,N(\mu)}$ as $\{\xi_{i,j}\}_{i,j}$ for the eigenvalue $e^{\sqrt{-1}\theta_0}$. Then, instead of \eqref{eq: expression of wedgeK0barxip}, we have that
 \[K_0\xi^{(\mu)}_1\wedge\cdots\wedge K_0\xi^{(\mu)}_{N(\mu)}=(e^{\sqrt{-1}\theta_0}-\mu)^{N(\mu)}\xi^{(\mu)}_1\wedge\cdots\wedge \xi^{(\mu)}_{N(\mu)}.\]
 Instead of $\langle\bar{\xi}_{i,j},J_{2n}\xi_{i',1}\rangle=0$, we use the $G$-orthogonality of the invariant spaces $E_{\mu}$ and $E_{e^{\sqrt{-1}\theta_0}}$ for $\mu\neq e^{\sqrt{-1}\theta_0}$.
\end{remark}

\subsection{Proof of Theorem~\ref{thm: krein lyubarskii c1} a) from Lemma~\ref{lem: asymptotics coeff char polynomial}}\label{subsect: proof of thm KL a from lem asymp coeff}
Recall the notations introduced in \eqref{eq: gammaxi=lambda0xi-xi}, \eqref{eq: xiij basis}, \eqref{eq: xiij regroup}, \eqref{eq: defn S Cxixi}, \eqref{eq: defn X xiJxi} and \eqref{eq: SX blocks}. As $t$ varies from $0$, the continuous branching of the eigenvalue $e^{\sqrt{-1}\theta_0}$ follows from the continuity of $t\mapsto p(\lambda,t)=\det(\lambda\cdot\Id-\gamma(t))$ and Lemma~\ref{lem: continuity of roots of general polynomials}.

Next, note that $S$ is Hermitian and strictly positive definite, $X^{(\ell,\ell)}$ is Hermitian (see Corollary~\ref{cor: Xellell hermitian non-degenerate xell1ell2 vanishes}). Hence, the roots of the polynomial \eqref{eq: defn of aellp} are non-zero real numbers.

We prove the asymptotic of eigenvalues when $t>0$. The proof for $t<0$ is similar.

By Lemma~\ref{lem: dim reduction}, without loss of generality, we assume that the eigenvalues of $\gamma(t)$ are $e^{\sqrt{-1}\theta_0}$ and $e^{-\sqrt{-1}\theta_0}$. There are two possibilities: $e^{\sqrt{-1}\theta_0}\in U\setminus\mathbb{R}$ or $e^{\sqrt{-1}\theta_0}=\pm 1$. Again, the proofs in both cases are quite similar and we only present the proof for the first case, which appears to be a bit more complicated. In this case, $p(\lambda,0)=(\lambda-e^{\sqrt{-1}\theta_0})^{n}(\lambda-e^{-\sqrt{-1}\theta_0})^{n}$.

Suppose that $\lambda(t)\in\mathbb{C}$ is a root of the polynomial $p(\lambda,t)$. For $\ell=1,\ldots,s$ and $t>0$, we consider
\begin{equation}\label{eq: defn w(t)}
w_{\ell}(t)\overset{\mathrm{def}}{=}t^{-\frac{1}{n_{\ell}}}(\lambda(t)-e^{\sqrt{-1}\theta_0}).
\end{equation}
By \eqref{defn: ck expansion at lambda0}, it is a root of the polynomial $\sum_{k=0}^{2n}c_{k}(t){t}^{\frac{k}{n_{\ell}}}w^{k}$ in $w$. Since the polynomial $p$ has $2n$ roots, there are $2n$ continuous curves $t\mapsto w(t)$ for $t\neq 0$. We will show that there are exactly $n_{\ell}m_{\ell}$ many curves with non-zero limits as $t$ tends to $0$, there are exactly $\sum_{\ell<\ell'\leq s}m_{\ell'}n_{\ell'}$ many curves with the limit $0$ as $t$ tends to $0$, and the remainder tends to $\infty$ as $t$ tends to $0$. So, there are exactly $n$ curves $t\mapsto\lambda(t)$ of eigenvalues of $\gamma(t)$ tending to $e^{-\sqrt{-1}\theta_0}$ and the remainder tends to $e^{\sqrt{-1}\theta_0}$ with possibly different speeds. Roughly speaking, each Jordan block associated with $e^{\sqrt{-1}\theta_0}$ of the size $n_{\ell}$ corresponds to $n_{\ell}$ many curves of eigenvalues, these curves are exactly H\"older-$\frac{1}{n_{\ell}}$ continuous at $t=0$ and they form an $n_{\ell}$-star at $e^{\sqrt{-1}\theta_0}$.

Our task is to find the limit of \eqref{eq: defn w(t)} by applying Lemma~\ref{lem: continuity of roots of general polynomials}. Although $w(t)$ is a root of the polynomial $\sum_{k=0}^{2n}c_{k}(t){t}^{\frac{k}{n_{\ell}}}w^{k}$, we cannot apply Lemma~\ref{lem: continuity of roots of general polynomials} directly to that polynomial since it has a trivial limit $0$ as $t\to 0$. Instead, we will divide that polynomial by certain fractal powers $t^{\tau(\ell)/n_{\ell}}$ of $t$, which is ``the biggest common factor'' of $\{c_{k}(t){t}^{\frac{k}{n_{\ell}}}\}_{k}$, and obtain a new polynomial $q(w,t)$ with the same roots and a non-trivial limit as $t\to 0$. To get the exponent $\tau(\ell)/n_{\ell}$, we will use the asymptotics of $t\mapsto c_{k}(t)$ summarized in Lemma~\ref{lem: asymptotics coeff char polynomial}. By Lemma~\ref{lem: asymptotics coeff char polynomial}, for $k=0,\ldots,n$, if $k=\sum_{\ell'>\ell}m_{\ell'}n_{\ell'}+un_{\ell}$ for some $u=0,1,\ldots,m_{\ell}$, then $\varphi(k)$ defined in \eqref{defn: varphi(k)} equals $\sum_{\ell'\leq\ell}m_{\ell'}-u$ and
\[c_{k}(t)t^{\frac{k}{n_{\ell}}}=t^{\frac{\tau(\ell)}{n_{\ell}}}(-1)^{\sum_{\ell'\leq\ell}m_{\ell'}n_{\ell'}-un_{\ell}}(e^{\sqrt{-1}\theta_0}-e^{-\sqrt{-1}\theta_0})^n\sum_{I\in\mathcal{I}_{\ell,u}}\det(d_{i,i'})_{i,i'\in I}+o(t^{\tau(\ell)/n_{\ell}}),\]
where $\tau(\ell)\overset{\mathrm{def}}{=}\sum_{\ell'=1}^{s}m_{\ell'}\min\left(n_{\ell'},n_{\ell}\right)$ and \[\mathcal{I}_{\ell,u}\overset{\mathrm{def}}{=}\left\{I\subset\{1,2,\ldots,\sum_{\ell'\leq \ell}m_{\ell'}\}:\sharp I=\sum_{\ell'\leq \ell}m_{\ell'}-u,\{1,2,\ldots,\sum_{\ell'<\ell}m_{\ell'}\}\subset I\right\}.\]
Otherwise, for $k\notin \{\sum_{\ell'>\ell}m_{\ell'}n_{\ell'},\sum_{\ell'>\ell}m_{\ell'}n_{\ell'}+n_{\ell},\ldots,\sum_{\ell'>\ell}m_{\ell'}n_{\ell'}+m_{\ell}n_{\ell}\}$,
\[c_{k}(t)t^{\frac{k}{n_{\ell}}}=o(t^{\tau(\ell)/n_{\ell}})\text{ as }t\to 0.\]
Hence, we define
\begin{equation}
 q(w,t)=\sum_{k=0}^{2n}c_{k}(t)t^{\frac{k-\tau(\ell)}{n_{\ell}}}w^{k}.
\end{equation}
Note that the limiting polynomial $q(w,0)\overset{\mathrm{def}}{=}\lim_{t\to 0}q(w,t)$ exists and
\begin{equation}\label{eq: limit poly qw0 exp1}
 q(w,0)=(-1)^{\sum_{\ell'\leq\ell}m_{\ell'}n_{\ell'}}(e^{\sqrt{-1}\theta_0}-e^{-\sqrt{-1}\theta_0})^nw^{\sum_{\ell'>\ell}m_{\ell'}n_{\ell'}}\sum_{u=0}^{m_{\ell}}(-w)^{un_{\ell}}\sum_{I\in\mathcal{I}_{\ell,u}}\det(d_{i,i'})_{i,i'\in I}.
\end{equation}
We write $d$ in block matrix as $S$ and $X$ in \eqref{eq: SX blocks}, i.e.,
$d=\begin{bmatrix}
 d^{(1,1)} & \cdots & d^{(1,s)}\\
 \vdots & \ddots & \vdots\\
 d^{(s,1)} & \cdots & d^{(s,s)}
\end{bmatrix}$. (For $1\leq \ell_1,\ell_2\leq s$, we note that $d^{(\ell_1,\ell_2)}$ is an $m_{\ell_1}\times m_{\ell_2}$-matrix.) For $I\in \mathcal{I}_{\ell,u}$, $(d_{i,i'})_{i,i'\in I}$ is the square matrix obtained by deleting $u$ elements on the diagonal of $d^{(\ell,\ell)}$ together with the rows and columns containing them from the matrix $\begin{bmatrix}
 d^{(1,1)} & \cdots & d^{(1,\ell)}\\
 \vdots & \ddots & \vdots\\
 d^{(\ell,1)} & \cdots & d^{(\ell,\ell)}
\end{bmatrix}$. When we sum over $\mathcal{I}_{\ell,u}$ in \eqref{eq: limit poly qw0 exp1}, we sum over all such choices of principle minors. Hence, we see that
\begin{equation}\label{eq: limit poly qw0 exp2}
 q(w,0)=(-1)^{\sum_{\ell'\leq\ell}m_{\ell'}n_{\ell'}}(e^{\sqrt{-1}\theta_0}-e^{-\sqrt{-1}\theta_0})^nw^{\sum_{\ell'>\ell}m_{\ell'}n_{\ell'}}Q_{\ell}(w),
\end{equation}
where
\begin{equation}
Q_{\ell}(w)=\det\begin{bmatrix}
 d^{(1,1)} & \cdots & d^{(1,\ell-1)} & d^{(1,\ell)}\\
 \vdots & \ddots & \vdots & \vdots\\
 d^{(\ell-1,1)} & \cdots & d^{(\ell-1,\ell-1)} & d^{(\ell-1,\ell)}\\
 d^{(\ell,1)} & \cdots & d^{(\ell,\ell-1)} & d^{(\ell,\ell)}+(-w)^{n_{\ell}}\cdot\Id_{m_{\ell}}
 \end{bmatrix}.
\end{equation}
By expanding the determinant $Q_{\ell}(w)$ in polynomials of $w$, we find that \eqref{eq: limit poly qw0 exp1} and \eqref{eq: limit poly qw0 exp2} coincide.
Similarly to the calculation from \eqref{eq: Q(w) special case} to \eqref{eq: tilde Q(w) special case}, by the relation \eqref{eq: d diag S inv X} between the matrices $d$, $S$ and $X$ and the fact that $X$ is upper triangular in the block sense (Corollary~\ref{cor: Xellell hermitian non-degenerate xell1ell2 vanishes}), we get that $Q_{\ell}(w)=0$ iff $w$ is the root of the polynomial
\begin{equation}
  \tilde{Q}_{\ell}(w)\overset{\mathrm{def}}{=}\det\begin{bmatrix}
  S^{(1,1)} & \cdots & S^{(1,\ell-1)} & S^{(1,\ell)}\\
  \vdots & \ddots & \vdots & \vdots\\
  S^{(\ell-1,1)} & \cdots & S^{(\ell-1,\ell-1)} & S^{(\ell-1,\ell)}\\
  S^{(\ell,1)} & \cdots & S^{(\ell,\ell-1)} & S^{(\ell,\ell)}-w^{n_{\ell}}(\sqrt{-1}e^{\sqrt{-1}\theta_0})^{-n_{\ell}}X^{(\ell,\ell)}
  \end{bmatrix}.
 \end{equation}
 Hence, there are $m_{\ell}n_{\ell}$ many roots $\{\omega_{\ell,p,q}\}_{p=1,\ldots,m_{\ell};q=1,\ldots,n_{\ell}}$ such that for fixed integers $\ell$ and $p$, $\left\{\frac{\omega_{\ell,p,q}}{\sqrt{-1}e^{\sqrt{-1}\theta_0}}\right\}_{q=1,\ldots,n_{\ell}}$ are the $n_{\ell}$-th roots of $a_{\ell,p}$ with multiplicities. (Recall that $a_{\ell,p}$ are the roots of \eqref{eq: defn of aellp}.) By Lemma~\ref{lem: continuity of roots of general polynomials}, there are corresponding $w_{\ell,p,q}(t)$ and $\lambda_{\ell,p,q}(t)=e^{\sqrt{-1}\theta_0}+t^{\frac{1}{n_{\ell}}}w_{\ell,p,q}(t)$ for $p=1,\ldots,m_{\ell}$ and $q=1,\ldots,n_{\ell}$ such that $w_{\ell,p,q}(0)=\lim_{t\to 0}w_{\ell,p,q}(t)$ exists and $(w_{\ell,p,q}(0))_{p=1,\ldots,m_{\ell};q=1,\ldots,n_{\ell}}$ are roots of
 $\tilde{Q}_{\ell}(w)$. Or equivalently, \eqref{eq: asymp eigenvalues} holds.

 \begin{remark}\label{rem: weaker assumptions on A}
  During the proof of Theorem~\ref{thm: krein lyubarskii c1} a), the only purpose of assuming \eqref{defn: weaker definiteness assumption} is to ensure that $\tilde{Q}_{\ell}(w)$ has non-zero roots. Hence, Theorem~\ref{thm: krein lyubarskii c1} a) still holds under the following weaker condition:
  \begin{equation}\det\begin{bmatrix}
  S^{(1,1)} & \cdots & S^{(1,\ell)}\\
  \vdots & \ddots & \vdots\\
  S^{(\ell,1)} & \cdots & S^{(\ell,\ell)}
  \end{bmatrix}\neq 0\text{ for all }\ell=1,\ldots,s.
  \end{equation}
  Or equivalently in the following coordinate-free form: the bilinear form $\langle A(0)\cdot,\cdot\rangle$ is non-degenerate on the spaces $V_\ell$ for all integer $\ell$, where
  \begin{equation*}
   V_\ell=\ker(e^{\sqrt{-1}\theta_0}\cdot\Id-\gamma(t))\cap(e^{\sqrt{-1}\theta_0}\cdot\Id-\gamma(t))^{\ell}\left(\ker(e^{\sqrt{-1}\theta_0}\cdot\Id-\gamma(t))^{2n}\right).
  \end{equation*}
 \end{remark}

\section{Proof of Theorem~\ref{thm: krein lyubarskii c1} b)}\label{sect: proof of KL C1 b}
Our proof strategy is to approximate the continuous curve $t\mapsto A(t)$ by analytic curves. To prove Theorem~\ref{thm: krein lyubarskii c1} b), we use Theorem~\ref{thm: krein lyubarskii c1} a) proved in Section~\ref{sect: proof of KL C1 a} and Theorem~\ref{thm: krein lyubarskii c1} for the analytic case. We present a sketch of Theorem~\ref{thm: krein lyubarskii c1} when $t\mapsto A(t)$ is real analytic in Subsection~\ref{subsect: krein lyubarskii analytic}.

We choose to present the proof for $n_{\ell}$ odd, $t>0$ and $a_{\ell,p}>0$. The proofs for other cases are similar and we left them to the reader. By Theorem~\ref{thm: krein lyubarskii c1} a), we see that $(\lambda_{\ell,p,q}(t))_{q=2,\ldots,n_{\ell}}$ are outside of $U$ for sufficiently small $t$. It remains to prove that $\lambda_{\ell,p,1}(t)$ is a Krein positive definite eigenvalue on $U$. By Theorem~\ref{thm: krein lyubarskii c1} a), we have that \[\lim_{t\downarrow 0}\frac{\lambda_{\ell,p,1}(t)-e^{\sqrt{-1}\theta_0}}{\sqrt{-1}e^{\sqrt{-1}\theta_0}t^{\frac{1}{n_{\ell}}}}>0.\] Hence, as $t$ increases from $0$, tangent to the circle and counter-clockwise, $\lambda_{\ell,p,1}(t)$ continuously branches from $e^{\sqrt{-1}\theta_0}$. We need to show that $\lambda_{\ell,p,1}(t)\in U$ for sufficiently small $t$.

We define
\begin{equation*}
I_{+}=\left\{(\ell,p,q):\lim_{t\downarrow 0}\frac{\lambda_{\ell,p,q}(t)-e^{\sqrt{-1}\theta_0}}{\sqrt{-1}e^{\sqrt{-1}\theta_0}t^{\frac{1}{n_{\ell}}}}\in(0,+\infty)\right\},
\end{equation*}
\begin{equation*}
I_{-}=\left\{(\ell,p,q):\lim_{t\downarrow 0}\frac{\lambda_{\ell,p,q}(t)-e^{\sqrt{-1}\theta_0}}{\sqrt{-1}e^{\sqrt{-1}\theta_0}t^{\frac{1}{n_{\ell}}}}\in(-\infty,0)\right\},
\end{equation*}
\begin{equation*}
 J_{+}(t)=\{(\ell,p,q):\lambda_{\ell,p,q}(t)\text{ is a Krein positive definite eigenvalue on }U\},
\end{equation*}
\begin{equation*}
 J_{-}(t)=\{(\ell,p,q):\lambda_{\ell,p,q}(t)\text{ is a Krein negative definite eigenvalue on }U\},
\end{equation*}
\begin{equation*}
 K_{+}(t)=\left\{(\ell,p,q):\lambda_{\ell,p,q}(t)\in U\setminus\{e^{\sqrt{-1}\theta_0}\}\text{ and it is on the counter-clockwise side of }e^{\sqrt{-1}\theta_0}\right\},
\end{equation*}
\begin{equation*}
 K_{-}(t)=\left\{(\ell,p,q):\lambda_{\ell,p,q}(t)\in U\setminus\{e^{\sqrt{-1}\theta_0}\}\text{ and it is on the clockwise side of }e^{\sqrt{-1}\theta_0}\right\}.\quad\footnote{When $t$ is sufficiently close to $0$, $\lambda_{\ell,p,q}(t)$ locates near $e^{\sqrt{-1}\theta_0}$. Thus, it makes sense to use the notions ``counter-clockwise side'' and ``clockwise side''.}
\end{equation*}
We will show that
\begin{equation}\label{eq: coincidence of I, J, K}
 \lim_{t\downarrow 0}J_{+}(t)=\lim_{t\downarrow 0}K_{+}(t)=I_{+}\text{ and }\lim_{t\downarrow 0}J_{-}(t)=\lim_{t\downarrow 0}K_{-}(t)=I_{-}.
\end{equation}

The continuity of $t\mapsto\det(\lambda\cdot\Id-\gamma(t))$ implies the continuity of the eigenvalues as $t$ varies. Also, by the first order asymptotics in Theorem~\ref{thm: krein lyubarskii c1} a), we see that $e^{\sqrt{-1}\theta_0}$ is no longer an eigenvalue of $\gamma(t)$ if $t$ varies from $0$ a bit. Hence, there exist $r>0$ and $\delta>0$ such that for $t\in(0,\delta]$, $(\lambda_{\ell,p,q}(t))_{\ell=1,\ldots,s;p=1,\ldots,m_{\ell};q=1,\ldots,n_{\ell}}$ are located in the punctured open disk $B(e^{\sqrt{-1}\theta_0},r)\setminus\{e^{\sqrt{-1}\theta_0}\}$ centered at $e^{\sqrt{-1}\theta_0}$ with the radius $r<0.1$, and the other eigenvalues of $\gamma(t)$ stay outside of $B(e^{\sqrt{-1}\theta_0},r)$. Shrinking $\delta$ if necessary, for $t\in(0,\delta]$, for $(\ell,p,q)\in I_{+}$ (resp. $(\ell,p,q)\in I_{-}$), $\lambda_{\ell,p,q}(t)$ stays on the counter-clockwise side (resp. clockwise side) of $e^{\sqrt{-1}\theta_0}$, and for $(\ell,p,q)\notin I_{-}\cup I_{+}$, $\lambda_{\ell,p,q}(t)\notin U$. Hence, $K_{+}(t)\subset I_{+}$ and $K_{-}(t)\subset I_{-}$ for $t\in(0,\delta]$.

Next, we prove that $\lim_{t\downarrow 0}\sharp K_{+}(t)\geq\sharp I_{+}\text{ and }\lim_{t\downarrow 0}\sharp K_{-}(t)\geq\sharp I_{-}$. For that purpose, we approximate the continuous curve $t\mapsto A(t)$ by analytic curves for $t\in[-1,1]$ by using Bernstein polynomials. For positive integers $M$, we define
\[A^{(M)}(t)=\sum_{k=-M}^{M}A\left(\frac{k}{M}\right)\binom{2M}{M+k}\left(\frac{1-t}{2}\right)^{M-k}\left(\frac{1+t}{2}\right)^{M+k}.\]
As a polynomial in $t$, the function $t\mapsto A^{(M)}(t)$ is analytic. By classical results on Bernstein polynomials, for continuous $t\mapsto A(t)$, $A^{(M)}(t)$ converges to $A(t)$ as $M\to\infty$ uniformly for $t\in[-1,1]$. Hence, the corresponding solution $\gamma^{(M)}(t)$ of \eqref{eq: ham system R modified initial cont pos} (with the same initial condition) also converges to $\gamma(t)$, uniformly for $t\in[-1,1]$.

We wish to use Krein-Lyubarskii theorem for approximated analytic systems, see Subsection~\ref{subsect: krein lyubarskii analytic} for a proof in analytic case. For that purpose, we need to verify the condition \eqref{defn: weaker definiteness assumption} for large enough $M$. By taking a subsequence, we may assume that  \eqref{defn: weaker definiteness assumption} holds for each $M$ and $t\in[-1,1]$. Otherwise, if \eqref{defn: weaker definiteness assumption} is violated for infinitely many $M$, then there exist sequences $\{M_{n}\}_n$, $\{t_{n}\}_{n}$, $\{\xi_{n}\}_n$ and $\{\lambda_n\}_{n}$ such that $\lim_{n\to+\infty}M_n=+\infty$, $\{t_n\}_n$ is bounded and for all $n$, $\lambda_n\in U$, $||\xi_n||_2=1$, $\gamma^{(M_n)}(t_n)\xi_n=\lambda_n\xi_n$ and $\langle A^{(M_n)}(t_n)\xi_n,\xi_n\rangle=0$. By compactness, taking subsequence if necessary, we may further assume that $\lim_{n\to+\infty}t_n=t$, $\lim_{n\to+\infty}\xi_n=\xi$ and $\lim_{n\to+\infty}\lambda_n=\lambda$. Then, by taking the limit, we see that $||\xi||_2=1$, $\lambda\in U$, $\gamma(t)\xi=\lambda\xi$ and $\langle A(t)\xi,\xi\rangle=0$, which contradicts with the assumption \eqref{defn: weaker definiteness assumption} on the continuous curve $t\mapsto \gamma(t)$.

In the following, we assume that \eqref{defn: weaker definiteness assumption} holds for each $M$.

For approximated systems, we analogously define the notations $\{\lambda^{(M)}_{\ell,p,q}(t)\}_{\ell=1,\ldots,s;p=1,\ldots,m_{\ell};q=1,\ldots,n_{\ell}}$, $I^{(M)}_{+}$, $I^{(M)}_{-}$, $J^{(M)}_{+}(t)$, $J^{(M)}_{-}(t)$, $K^{(M)}_{+}(t)$, $K^{(M)}_{-}(t)$ and $D^{(M)}$ (see \eqref{eq: defn D ham system}). In the following, we take $M$ large enough such that $(\lambda^{(M)}_{\ell,p,q}(t))_{\ell=1,\ldots,s;p=1,\ldots,m_{\ell};q=1,\ldots,n_{\ell}}$ locate in $B(e^{\sqrt{-1}\theta_0},r)$ for $t\in(0,\delta]$. For $t\notin D^{(M)}$, we define an index
\[\nu^{(M)}_{+}(t)=\sharp (K^{(M)}_{+}(t)\cap J^{(M)}_{+}(t))-\sharp (K^{(M)}_{+}(t)\cap J^{(M)}_{-}(t)).\]
Since $(D^{(M)})^{c}$ is dense, for $t\in D^{(M)}$, we may define $\nu^{(M)}_{+}(t)=\limsup_{s\uparrow t,s\notin D^{(M)}}\nu^{(M)}_{+}(s)$. Direct approximation argument relying on the convergence $\lim_{M\to\infty}\gamma^{(M)}=\gamma$ is not sufficient to conclude the desired result. Instead, we will crucially use the following feature of $\nu^{(M)}_{+}(t)$ in the argument.
\begin{claim}\label{claim: non-decrease nu M t}
 For large enough $M$, as $t$ increases from $0$ to $\delta$, the index $\nu^{(M)}_{+}(t)$ is non-decreasing and integer-valued.
\end{claim}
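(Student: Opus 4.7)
The plan is to split the claim into its two assertions. For integer-valuedness, note that on $(D^{(M)})^c$ the index $\nu^{(M)}_{+}(t)$ is a difference of cardinalities of finite sets, hence an integer lying in $\{-2n,\ldots,2n\}$; the defining $\limsup$ at points of $D^{(M)}$ then remains integer. This part is immediate.

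For monotonicity, the strategy is to enumerate the finitely many times $t\in(0,\delta]$ at which $\nu^{(M)}_{+}$ can jump, and to check non-negativity of each jump. The first reduction is to restrict attention to the ball $B(e^{\sqrt{-1}\theta_0},r)$: by Theorem \ref{thm: krein lyubarskii c1} a) applied to $\gamma$, the eigenvalues $\lambda_{\ell,p,q}(t)$ of $\gamma(t)$ for $t\in(0,\delta]$ lie in a compact subset of the interior of $B(e^{\sqrt{-1}\theta_0},r)$, while all other eigenvalues of $\gamma(t)$ remain bounded away from this ball. Uniform convergence $\gamma^{(M)}\to\gamma$ on $[0,\delta]$ therefore implies, for all large $M$, that no eigenvalue of $\gamma^{(M)}(t)$ crosses $\partial B(e^{\sqrt{-1}\theta_0},r)$ during $t\in(0,\delta]$. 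Hence $\nu^{(M)}_{+}$ can change only at intrinsic events of $\gamma^{(M)}$ inside $B$.

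Next I would apply the analytic Krein--Lyubarskii theorem (Subsection \ref{subsect: krein lyubarskii analytic}) to the analytic system $\gamma^{(M)}$, which satisfies \eqref{defn: weaker definiteness assumption} by the pre-selection of $M$ made earlier in the proof. This yields: $D^{(M)}$ is discrete; outside $D^{(M)}$ every eigenvalue of $\gamma^{(M)}(t)$ on $U$ is Krein definite, Krein positive ones moving counter-clockwise and Krein negative ones moving clockwise at nonzero angular speed; two on-$U$ eigenvalues of opposite Krein types either pass through each other (keeping their directions) or leave $U$ as a Krein indefinite pair; eigenvalues arriving on $U$ from outside split into a Krein positive on the counter-clockwise side and a Krein negative on the clockwise side of the collision point. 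The possible jump events in $(0,\delta]$ are therefore: (a) a simple eigenvalue of definite Krein type crosses $e^{\sqrt{-1}\theta_0}$ (necessarily at a time outside $D^{(M)}$ since simple eigenvalues on $U$ are Krein definite by Lemma \ref{lem: G othorgonal invariant spaces}); (b) a collision/arrival at a point of $K^{(M)}_{\pm}$ distinct from $e^{\sqrt{-1}\theta_0}$, at some $t\in D^{(M)}$; (c) several eigenvalues simultaneously meet $e^{\sqrt{-1}\theta_0}$ at some $t^{*}\in D^{(M)}$.

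For case (a) the crossing adds $+1$ to $\nu^{(M)}_{+}$ (Krein positive from $K^{(M)}_{-}$ to $K^{(M)}_{+}$ contributes $+1$; Krein negative from $K^{(M)}_{+}$ to $K^{(M)}_{-}$ also contributes $+1$). In case (b) the $+1$ and $-1$ contributions of the Krein positive/negative pair, both lying on the same side of $e^{\sqrt{-1}\theta_0}$ before and after the event, cancel, so the net change is $0$. Case (c) is the main obstacle: here I would apply Theorem \ref{thm: krein lyubarskii c1} a) to $\gamma^{(M)}$ shifted to time $t^{*}$, enumerate the branching eigenvalues by Jordan block, and use \eqref{eq: asymp eigenvalues} together with the Krein-type description of Theorem \ref{thm: krein lyubarskii c1} b) and the monotone direction principle to show that each Jordan block at $t^{*}$ contributes exactly $+1$ to the jump of $\nu^{(M)}_{+}$, independent of the parity of $n_{\ell}$ and of the sign of the corresponding $a^{(M)}_{\ell,p}$; summing over the blocks gives a strictly positive jump. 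The delicate point I expect to require care is the consistent matching, across the event at $t^{*}$, of the sign of each $a^{(M)}_{\ell,p}$ with the Krein type and the side of $e^{\sqrt{-1}\theta_0}$ on which the branching eigenvalues lie, so that each pair of on-$U$ eigenvalues created or annihilated at $e^{\sqrt{-1}\theta_0}$ contributes $+1$ rather than $-1$. Combining all four cases then yields the non-decreasing property on $(0,\delta]$.
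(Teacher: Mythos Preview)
Your approach is essentially the same as the paper's---both restrict to the ball $B(e^{\sqrt{-1}\theta_0},r)$, exclude crossings of $\partial B$ for large $M$, and then invoke Theorem~\ref{thm: krein lyubarskii c1} in the analytic case to analyse what happens when an eigenvalue of $\gamma^{(M)}$ meets the endpoint $e^{\sqrt{-1}\theta_0}$ of $\mathrm{arc}_{+}$. The paper, however, packages your cases (a)--(c) more cleanly by introducing Ekeland's index $\ind^{(M)}(\lambda,t_0)=p_t-q_t$ for every eigenvalue $\lambda\in U$ (including Krein indefinite ones) and rewriting $\nu^{(M)}_{+}(t)$ as the sum of these indices over $\mathrm{arc}_{+}$. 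Local constancy of the index under branching then absorbs your cases (a) and (b) in one stroke: $\nu^{(M)}_{+}$ can change only when an eigenvalue reaches the boundary of $\mathrm{arc}_{+}$, hence only at $e^{\sqrt{-1}\theta_0}$.

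For case (c) your computation is not quite right, and this is worth flagging. The assertion that ``each Jordan block at $t^{*}$ contributes exactly $+1$ to the jump, independent of the parity of $n_{\ell}$ and of the sign of $a^{(M)}_{\ell,p}$'' is false as stated: for instance, a single odd block with $a_{\ell,p}<0$ produces an eigenvalue that sits on $\mathrm{arc}_{-}$ on both sides of $t^{*}$ and contributes $0$, not $+1$, while an even block with the appropriate sign contributes $+2$. The correct and simpler statement (which the paper uses) is: by Theorem~\ref{thm: krein lyubarskii c1}~b) in the analytic case, any eigenvalue that \emph{enters} $\mathrm{arc}_{+}$ from $e^{\sqrt{-1}\theta_0}$ as $t$ increases past $t^{*}$ is Krein positive, and any eigenvalue that \emph{leaves} $\mathrm{arc}_{+}$ through $e^{\sqrt{-1}\theta_0}$ is Krein negative; hence the jump in $\nu^{(M)}_{+}$ at $t^{*}$ is the number entering plus the number leaving, which is non-negative (in fact strictly positive). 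This is immediate once you observe that for $t$ on either side of $t^{*}$, the branched eigenvalues on $U$ lying in $K^{(M)}_{+}$ are exactly the Krein positive ones and those in $K^{(M)}_{-}$ are the Krein negative ones---a direct consequence of \eqref{eq: asymp eigenvalues} combined with the Krein-type statements in Theorem~\ref{thm: krein lyubarskii c1}~b). No block-by-block bookkeeping is needed.
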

We focus on the application of Claim~\ref{claim: non-decrease nu M t} and postpone its proof in the end of this section.

Since $\lim_{M\to\infty}K^{(M)}_{+}(t)=K_{+}(t)$, to prove $\lim_{t\downarrow 0}\sharp K_{+}(t)\geq\sharp I_{+}$, it suffices to show that for $M$ large enough, for all $t\in(0,\delta)$, $\sharp K^{(M)}_{+}(t)\geq I_{+}$. By upper semi-continuity\footnote{Note that $\sharp K^{(M)}_{+}(t)$ counts the multiplicity.} of $t\mapsto \sharp K^{(M)}_{+}(t)$, it suffices to show the inequality for $t$ in a dense set of $(0,\delta)$, say $(0,\delta)\setminus D^{(M)}$. By definition of $\nu^{(M)}_{+}(t)$, $\sharp K^{(M)}_{+}(t)\geq \nu^{(M)}_{+}(t)$ for $t\in (0,\delta)\setminus D^{(M)}$. Hence, it is enough to show that $\inf\{\nu^{(M)}_{+}(t):t\in(0,\delta)\}\geq \sharp I_{+}$. By Claim~\ref{claim: non-decrease nu M t}, we see that $\inf\{\nu^{(M)}_{+}(t):t\in(0,\delta)\}$ equals the right limit $\nu^{(M)}_{+}(0+)$ of $\nu^{(M)}_{+}$ at $0$. Hence, it suffices to show that $\nu^{(M)}_{+}(0+)\geq \sharp I_{+}$. Note that $\lim_{t\downarrow 0}\sharp (K^{(M)}_{+}(t)\cap J^{(M)}_{+}(t))=\sharp I^{(M)}_{+}$ and $\lim_{t\downarrow 0}\sharp (K^{(M)}_{+}(t)\cap J^{(M)}_{-}(t))=0$ by Theorem~\ref{thm: krein lyubarskii c1} in the analytic case. Moreover, by Remark~\ref{rem: quanlitative A independence}, since $\gamma^{(M)}(0)=\gamma(0)$ by construction, we have that $\lim_{M\to\infty}\sharp I^{(M)}_{+}=\sharp I_{+}$. Hence, $\nu^{(M)}_{+}(0+)$ precisely equals $\sharp I_{+}$ for $M$ large enough. Therefore, we have that
\begin{equation}\label{eq: K plus t bigger than I plus}
 \sharp K_{+}(t)\geq\sharp I_{+}\text{ for }t\in(0,\delta).
\end{equation}
and similarly, we see that $\sharp K_{-}(t)\geq \sharp I_{-}$.

Hence, together with the inclusion $K_{+}(t)\subset I_{+}$ and $K_{-}(t)\subset I_{-}$ for small enough $t>0$, we get that $K_{+}(t)=I_{+}$ and $K_{-}(t)=I_{-}$. From the argument for \eqref{eq: K plus t bigger than I plus}, for $t\in (0,\delta)$ with $\delta$ small enough, $\nu_{+}^{(M)}(t)=\sharp I_{+}$ as long as $M$ is large enough such that $\sharp K_{+}^{(M)}(t)=\sharp K_{+}(t)$.

To finish the proof of \eqref{eq: coincidence of I, J, K}, consider the invariant space $W_{+}(t)$ (resp. $W_{-}(t)$) spanned by the invariant spaces associated with the eigenvalues indexed by $K_{+}(t)$ (resp. $K_{-}(t)$), i.e., $W_{+}(t)\overset{\text{def}}{=}\sum_{(\ell,p,q)\in K_{+}(t)}E_{\lambda_{\ell,p,q}(t)}$ (resp. $W_{-}(t)\overset{\text{def}}{=}\sum_{(\ell,p,q)\in K_{-}(t)}E_{\lambda_{\ell,p,q}(t)}$). We use similar notations $W^{(M)}_{+}(t)$ and $W^{(M)}_{-}(t)$ for the approximated systems. By Lemma~\ref{lem: G othorgonal invariant spaces}, the Krein form $(\cdot,\cdot)_{G}$ is non-degenerate on these spaces. It suffices to show that the negative index of $(\cdot,\cdot)_{G}|_{W_{+}(t)}$ is zero and the positive index of $(\cdot,\cdot)_{G}|_{W_{-}(t)}$ is zero for small enough $t>0$. Again, we will use the same approximated systems, analyze the analytical systems and pass to the limit in the end. The non-degeneracy of the Krein forms is an important sufficient condition for the continuity of indices.

In the following, we will give the proof for $W_{+}(t)$. The other part is similar and is left to the reader. Note that there exists small enough $\delta>0$ such that $K_{+}^{(M)}(t)=I_{+}$ for $M$ large enough and $t\in(0,\delta]$, $K_{+}(t)=I_{+}$ for $t\in(0,\delta]$ and $t\mapsto W_{+}(t)$ is continuous\footnote{See e.g. \cite[Section~5.1, Chapter~2]{KatoMR1335452}.} for $t\in(0,\delta]$. By non-degeneracy of the Krein form on $W_{+}(t)$, the positive and negative indices are invariant for $t\in(0,\delta]$. Note that $\cup_{M\in\mathbb{N}}D^{(M)}$ is countable. Hence, by decreasing $\delta$ if necessary, we assume that $\delta\notin\cup_{M\in\mathbb{N}}D^{(M)}$. We will show that the Krein form is strictly positive definite on $W_{+}(\delta)$. Note that $\lim_{M\to\infty}K_{+}^{(M)}(\delta)=I_{+}=K_{+}(\delta)$ and hence, $\lim_{M\to\infty}W^{(M)}_{+}(\delta)=W_{+}(\delta)$ (in certain Grassmannian). Therefore, as $M\to\infty$, the positive and negative indices of the restriction of the Krein form $(\cdot,\cdot)_{G}$ on $W^{(M)}_{+}(\delta)$ converge to those of $W_{+}(\delta)$. As $\delta\notin D^{(M)}$, the positive index of $(\cdot,\cdot)_{G}|_{W^{(M)}_{+}(\delta)}$ is precisely $\sharp (K^{(M)}_{+}(\delta)\cap J^{(M)}_{+}(\delta))$, which is not less than $\nu^{(M)}_{+}(\delta)$ by definition. Recall that $\nu^{(M)}_{+}(t)$ is non-decreasing and $\lim_{t\downarrow 0}\nu^{(M)}_{+}(t)=\sharp I^{(M)}_{+}$. Hence, the positive index of $(\cdot,\cdot)_{G}|_{W^{(M)}_{+}(\delta)}$ is at least $\sharp I_{+}^{(M)}$. On the other hand, $\dim W^{(M)}_{+}(\delta)=\sharp K_{+}^{(M)}(\delta)\leq \sharp I_{+}^{(M)}$. Hence, the positive and negative index of $W^{(M)}_{+}(\delta)$ are respectively $\sharp I_{+}^{(M)}$ and $0$. Also, recall that $\lim_{M\to\infty}\sharp I^{(M)}_{+}=\sharp I_{+}$. Therefore, for $M$ sufficient large, the positive and negative index of $W^{(M)}_{+}(\delta)$ are respectively $\sharp I_{+}$ and $0$. Hence, by taking $M\to\infty$, the Krein form $(\cdot,\cdot)_{G}$ must be strictly positive definite on $W_{+}(t)$ for $t\in(0,\delta]$.

\bigskip

We finish this section by verifying Claim~\ref{claim: non-decrease nu M t}.
\begin{proof}[Proof of Claim~\ref{claim: non-decrease nu M t}]
Note that $\nu^{(M)}_{+}(t)$ is integer-valued by definition. It remains to prove its monotonicity, which follows from Theorem~\ref{thm: krein lyubarskii c1} for the analytic case.

Firstly, let us recall the definition of the index of an eigenvalue on $U$ (cf. \cite[Section~1.3]{EkelandMR1051888}). For $t_0\in\mathbb{R}$ and an eigenvalue $\lambda\in U$ of $\gamma^{(M)}(t_0)$, we will define an index $\ind^{(M)}(\lambda,t_0)$ as in \cite[Section~1.3]{EkelandMR1051888}. As $t$ varies from $t_0$, the eigenvalue $\lambda$ branches into $N$ eigenvalues. (For instance, when no bifurcation occurs, we have that $N=1$.) Among these eigenvalues we denote by $p_t$ the number of Krein positive definite eigenvalues and by $q_t$ the number of Krein negative definite eigenvalues. For $t$ close to $t_0$, $t\notin D^{(M)}$. Thus, $(p_t,q_t)$ is defined in a punctured neighborhood of $t_0$. By Corollary~5 in \cite[Section~1.3]{EkelandMR1051888}, the difference $p_t-q_t$ is locally constant near $t_0$. (Alternatively, we can deduce that from Theorem~\ref{thm: krein lyubarskii c1} in the analytic case. For instance, one can check this for each group of eigenvalues $\{\lambda_{\ell,p,q}(t)\}_{q=1,\ldots,n_{\ell}}$ forming an $n_{\ell}$-star, see \eqref{eq: asymp eigenvalues}.) The index $\ind^{(M)}(\lambda,t_0)$ is defined to be the integer $p_t-q_t$ for $t$ close to $t_0$. For a Krein positive definite eigenvalue, its index is simply its algebraic (and geometric) multiplicity. For a Krein negative definite eigenvalue, the index is the opposite of its algebraic (and geometric) multiplicity. Hence, if an eigenvalue $\lambda$ branches into several ones, the sum of the indices of the eigenvalues branched from $\lambda$ must equal to the index of $\lambda$.

Note that $\nu^{(M)}_{+}(t)=\sum_{(\ell,p,q)\in K^{(M)}_{+}(t)}\ind^{(M)}(\lambda_{\ell,p,q},t)$, i.e., it is the sum of the indices of eigenvalues indexed by $K_{+}(t)$. Recall that the eigenvalues $\lambda(t)$ branched from $e^{\sqrt{-1}\theta_0}$ are located in a small disk $B(e^{\sqrt{-1}\theta_0},r)$ for $t\in(0,\delta]$. In the following, we assume that $M$ is sufficient large such that $\gamma^{(M)}(t)$ has no eigenvalue on the boundary of $B(e^{\sqrt{-1}\theta_0},r)$ for $t\in(0,\delta]$. The part of $U$ inside $B(e^{\sqrt{-1}\theta_0},r)$ is an arc with a mid-point at $e^{\sqrt{-1}\theta_0}$. The point $e^{\sqrt{-1}\theta_0}$ separates the arc into two smaller arcs. We denote by $\mathrm{arc}_{+}$ the open half arc on the counter-clockwise side of $e^{\sqrt{-1}\theta_0}$, see Figure~\ref{fig: counter-clockwise arc}.
\begin{figure}
\centering
\label{fig: counter-clockwise arc}
\includegraphics[width=0.4\textwidth]{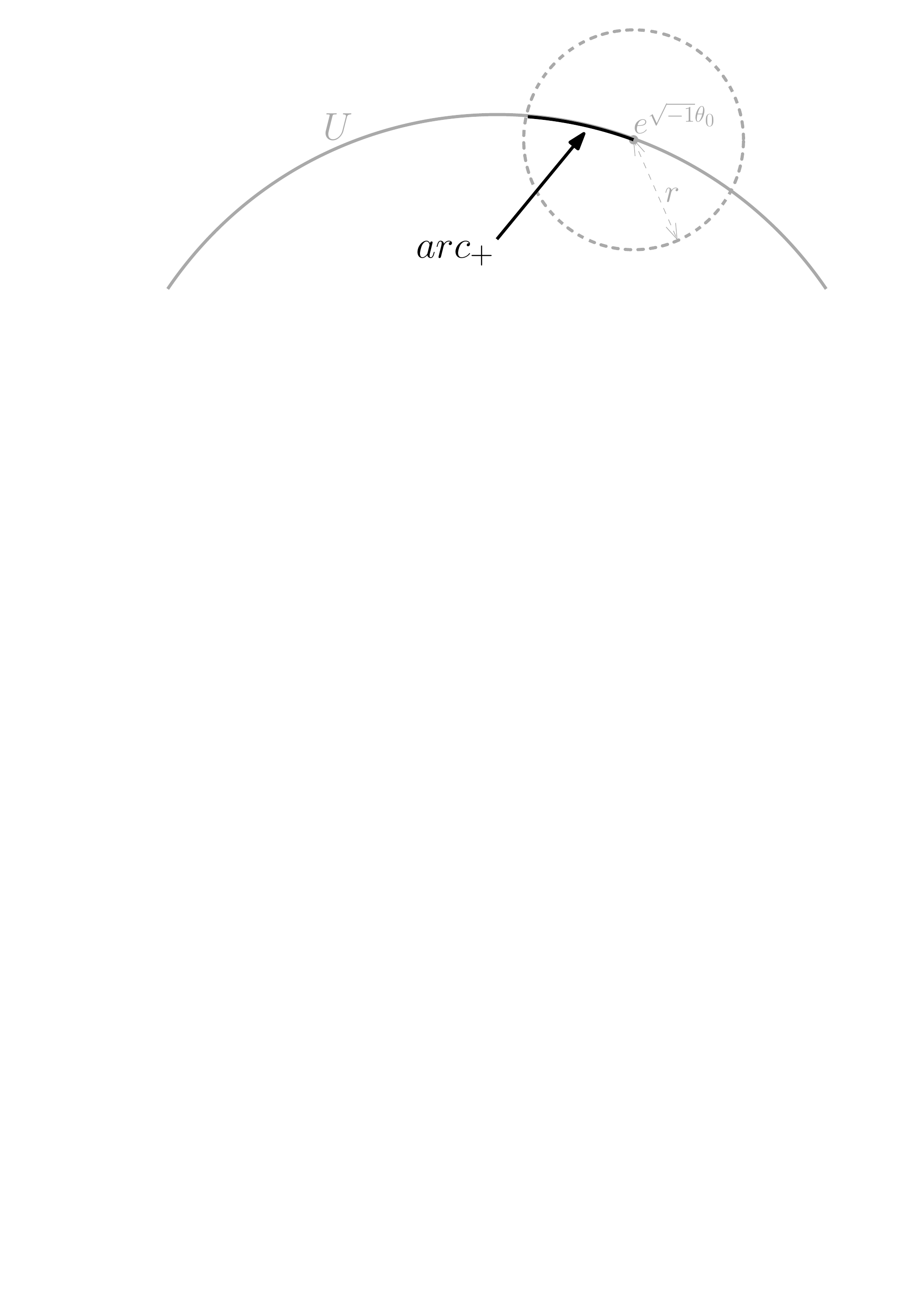}
\caption{$\mathrm{arc}_{+}$}
\end{figure}
Then, for $t\in(0,\delta)$, $\nu^{(M)}_{+}(t)$ is the sum of indices of eigenvalues in the interior of $\mathrm{arc}_{+}$. By the local constancy on the sum of the indices of branched eigenvalues, we see that $\nu^{(M)}_{+}(t)$ doesn't vary around $t_0\in(0,\delta)$ except that $\gamma^{(M)}(t_0)$ has an eigenvalue on the boundary of $\mathrm{arc}_{+}$. In the exceptional case, $\gamma^{(M)}(t_0)$ has no eigenvalue on the boundary of the disk $B(e^{\sqrt{-1}\theta_0},r)$ and $e^{\sqrt{-1}\theta_0}$ is an eigenvalue of $\gamma^{(M)}(t_0)$. By Theorem~\ref{thm: krein lyubarskii c1} for the analytic case, when $t$ increases through $t_0$, the eigenvalues entered in $\mathrm{arc}_{+}$ from $e^{\sqrt{-1}\theta_0}$ must move counter-clockwise and be Krein positive definite, the eigenvalues left $\mathrm{arc}_{+}$ from $e^{\sqrt{-1}\theta_0}$ must move clockwise and be Krein negative definite. Hence, $\nu^{(M)}_{+}(t)$ strictly increases in this case. Thus, we see that $t\mapsto \nu^{(M)}_{+}(t)$ is non-deceasing for $t\in(0,\delta)$ for $M$ sufficient large.
\end{proof}


\appendix
\section{Appendix}

 \subsection{Alternative expression for \texorpdfstring{$C(t,\varepsilon)$}{C(t,ε)}}\label{subsect: lem expression of C}
We verify the second equality in \eqref{defn: CTepsilon}.
\begin{lemma}\label{lem: expression of C}
  Let $C(t,\varepsilon)\overset{\mathrm{def}}{=}-\gamma(t,\varepsilon)^{T}J_{2n}\frac{\partial}{\partial \varepsilon}\gamma(t,\varepsilon)$. Then,
  \[C(t,\varepsilon)=\int_{0}^{t}\gamma(u,\varepsilon)^{T}\frac{\partial }{\partial \varepsilon}A(u,\varepsilon)\gamma(u,\varepsilon)\,\mathrm{d}t.\]
 \end{lemma}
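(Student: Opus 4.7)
The plan is to fix $\varepsilon$ and show that both sides of the claimed identity agree at $t=0$ and have the same derivative in $t$ (almost everywhere, given that $A(\cdot,\varepsilon)$ is only locally integrable). Since $\gamma(0,\varepsilon)=\Id_{2n}$ is independent of $\varepsilon$, the initial value is $C(0,\varepsilon)=-\Id_{2n}^{T}J_{2n}\cdot 0=0$, matching the right hand side.

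The main computation is to differentiate
\[
C(t,\varepsilon)=-\gamma(t,\varepsilon)^{T}J_{2n}\frac{\partial\gamma}{\partial\varepsilon}(t,\varepsilon)
\]
with respect to $t$. Using the ODE \eqref{eq: perturbed ham system R} and $A(t,\varepsilon)^{T}=A(t,\varepsilon)$, $J_{2n}^{T}=-J_{2n}$, one gets
\[
\left(\frac{\partial\gamma}{\partial t}\right)^{T}=\gamma^{T}A^{T}J_{2n}^{T}=-\gamma^{T}AJ_{2n},
\]
and differentiating the ODE in $\varepsilon$ (permissible since $\varepsilon\mapsto A(\cdot,\varepsilon)$ is $C^{1}$ into $L^{1}[0,T]$) yields
\[
\frac{\partial^{2}\gamma}{\partial t\,\partial\varepsilon}=J_{2n}\frac{\partial A}{\partial\varepsilon}\gamma+J_{2n}A\frac{\partial\gamma}{\partial\varepsilon}.
\]
Substituting and using $J_{2n}^{2}=-\Id_{2n}$, the three terms produced by the product rule combine as
\[
\frac{\partial C}{\partial t}=\gamma^{T}AJ_{2n}^{2}\frac{\partial\gamma}{\partial\varepsilon}-\gamma^{T}J_{2n}^{2}\frac{\partial A}{\partial\varepsilon}\gamma-\gamma^{T}J_{2n}^{2}A\frac{\partial\gamma}{\partial\varepsilon}=\gamma^{T}\frac{\partial A}{\partial\varepsilon}\gamma,
\]
where the $A$-terms cancel. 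Integrating from $0$ to $t$ and using $C(0,\varepsilon)=0$ gives the claimed formula.

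The only mild obstacle is the regularity: since $A(\cdot,\varepsilon)$ is merely locally integrable, $\gamma$ is only absolutely continuous in $t$ and $\partial_{t}\gamma$ exists a.e. The computation above therefore takes place almost everywhere in $t$, and the final equality is recovered by the fundamental theorem of calculus for absolutely continuous functions. Exchanging $\partial_{t}$ and $\partial_{\varepsilon}$ on $\gamma$ is justified by the $C^{1}$-Fréchet hypothesis on $\varepsilon\mapsto A(\cdot,\varepsilon)$ in $L^{1}[0,T]$ together with the standard continuous dependence of solutions of linear ODEs on $L^{1}$-coefficients; this is the only step needing a short verification, and all the rest is algebraic manipulation with $J_{2n}$ and the symmetry of $A$.
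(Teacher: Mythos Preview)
Your proof is correct and follows essentially the same route as the paper: verify $C(0,\varepsilon)=0$ from the initial condition $\gamma(0,\varepsilon)=\Id_{2n}$, then compute $\partial_t C$ via the product rule using $(\partial_t\gamma)^T=\gamma^T A J_{2n}^T$ and the variational equation $\partial_t\partial_\varepsilon\gamma=J_{2n}(\partial_\varepsilon A)\gamma+J_{2n}A\,\partial_\varepsilon\gamma$, so that the $A$-terms cancel and only $\gamma^T(\partial_\varepsilon A)\gamma$ remains. Your added remarks on the a.e.\ interpretation and the justification of interchanging $\partial_t$ and $\partial_\varepsilon$ are appropriate given the $L^1$ setting; the paper handles the latter by invoking a standard contraction/Gr\"onwall argument.
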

  \begin{proof}
  Note that for all $\varepsilon$, $\gamma(0,\varepsilon)=\Id$. Hence, $\frac{\partial}{\partial \varepsilon}\gamma(0,\varepsilon)=0$ and $C(0,\varepsilon)=0$. Thus, it remains to show that
  \begin{equation*}
   \frac{\partial }{\partial t}C(t,\varepsilon)=\gamma(t,\varepsilon)^{T}\frac{\partial }{\partial \varepsilon}A(t,\varepsilon)\gamma(t,\varepsilon).
  \end{equation*}
  By a standard contraction argument with Gr\"{o}nwell's inequality, we have that
  \begin{equation}\label{eq: partial t partial epsilon gamma}
   \frac{\partial}{\partial t}\frac{\partial }{\partial \varepsilon}\gamma(t,\varepsilon)=J_{2n}\frac{\partial}{\partial \varepsilon}A(t,\varepsilon)\gamma(t,\varepsilon)+J_{2n}A(t,\varepsilon)\frac{\partial }{\partial \varepsilon}\gamma(t,\varepsilon).
  \end{equation}
  By \eqref{eq: perturbed ham system R} and the symmetry of $A$, we have that
  \begin{equation}\label{eq: partial t gamma star}
   \frac{\partial }{\partial t}\gamma(t,\varepsilon)^{T}=(J_{2n}A(t,\varepsilon)\gamma(t,\varepsilon))^{T}=\gamma(t,\varepsilon)^{T}A(t,\varepsilon)J_{2n}^{T}.
  \end{equation}
  Hence, combining the definition of $C$, \eqref{eq: partial t partial epsilon gamma} and \eqref{eq: partial t gamma star}, we obtain that
  \begin{align*}
   \frac{\partial}{\partial t} C(t,\varepsilon)=&-\frac{\partial }{\partial t}\gamma(t,\varepsilon)^{T}J_{2n}\frac{\partial}{\partial \varepsilon}\gamma(t,\varepsilon)-\gamma(t,\varepsilon)^{T}J_{2n}\frac{\partial}{\partial t}\frac{\partial }{\partial \varepsilon}\gamma(t,\varepsilon)\notag\\
   =&\gamma(t,\varepsilon)^{T}\frac{\partial}{\partial \varepsilon}A(t,\varepsilon)\gamma(t,\varepsilon),
  \end{align*}
  which completes the proof.
  \end{proof}

\subsection{Dimension reduction}
The following lemma helps to simplify certain notations and proofs (since it allows us to focus on one eigenvalue and to reduce the dimension in many cases). Besides, it is of independent interest. Therefore, we choose to present it here.

\begin{lemma}\label{lem: dim reduction}
 For all $n\geq2$, let $\Lambda(t_0)$ and $\tilde{\Lambda}(t_0)$ be a division of the eigenvalues of $\gamma(t_0)$ for $t_0\in\mathbb{R}$, where $t\mapsto\gamma(t)$ is the solution of \eqref{eq: ham system R modified initial cont pos}. Assume that $\Lambda(t_0)$ is closed under the conjugation $\lambda\rightarrow\bar{\lambda}$ and the circular reflection $\lambda\mapsto\bar{\lambda}^{-1}$ with respect to $U$. There exists $\varepsilon>0$ such that for $t\in[t_0-\varepsilon,t_0+\varepsilon]$, there exists a division of the eigenvalues of $\gamma(t)$ into $\Lambda(t)$ and $\tilde{\Lambda}(t)$ such that $\Lambda(t)$ is closed under the conjugation $\lambda\mapsto\bar{\lambda}$ and the circular reflection $\lambda\mapsto\bar{\lambda}^{-1}$, and $\Lambda(t)$ (resp. $\tilde{\Lambda}(t)$) converges to $\Lambda(t_0)$ (resp. $\tilde{\Lambda}(t_0)$) as $t$ tends to $t_0$. Denote by $E_{t}$ (resp. $\tilde{E}_t$) the sum of invariant spaces $(E_{\lambda})_{\lambda\in\Lambda(t)}$ (resp. $(E_{\lambda})_{\lambda\in\tilde{\Lambda}(t)}$). Then, by decreasing $\varepsilon$ if necessary, we also require that $\dim(E_t)=\dim(E_{t_0})$, $\dim(\tilde{E}_t)=\dim(\tilde{E}_{t_0})$ for $t\in[t_0-\varepsilon,t_0+\varepsilon]$ and $\lim_{t\rightarrow t_0}E_t=E_{t_0}$. Moreover, there exists a $C^{1}$ curve $t\mapsto Q(t)\in M_{2n\times 2k}(\mathbb{R})$ where $2k=\dim(E_{t_0})$ such that
  \begin{itemize}
   \item the column vectors of $Q(t)$ form a basis of $E_t$ and $Q^{*}(t)J_{2n}Q(t)=J_{2k}$, i.e., the column vectors of $Q$ form a symplectic basis of $E_t$,
   \item $\gamma(t)Q(t)=Q(t)M_{Q}(t)$ uniquely determines a $C^1$ curve $t\mapsto M_Q(t)\in\Sp(2k,\mathbb{R})$,
   \item $\mathrm{d}M_Q/\mathrm{d}t=J_{2k}Q^{*}(t)A(t)Q(t)M_Q(t)$.
  \end{itemize}
\end{lemma}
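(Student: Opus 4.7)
The plan is to use Riesz spectral projections with a carefully chosen contour, together with a parallel‑transport construction of the basis $Q(t)$, so that the reduced evolution on $\Sp(2k,\mathbb{R})$ has the claimed Hamiltonian form.

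First, I would fix a finite union $\Gamma\subset\mathbb{C}$ of small circles, one around each element of $\Lambda(t_0)$, so small that no eigenvalue of $\gamma(t_0)$ in $\tilde\Lambda(t_0)$ lies inside $\Gamma$, and arrange them so that $\Gamma$ (as an oriented cycle) is invariant under both $z\mapsto\bar z$ and $z\mapsto\bar z^{-1}$. By continuity of eigenvalues (Lemma~\ref{lem: continuity of roots of general polynomials}), there exists $\varepsilon>0$ such that no eigenvalue of $\gamma(t)$ lies on $\Gamma$ for $t\in[t_0-\varepsilon,t_0+\varepsilon]$; define $\Lambda(t)$ (resp.\ $\tilde\Lambda(t)$) as those eigenvalues of $\gamma(t)$ inside (resp.\ outside) $\Gamma$. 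Since $\gamma(t)$ is real symplectic and $\Gamma$ enjoys both symmetries, $\Lambda(t)$ is stable under $\lambda\mapsto\bar\lambda$ and $\lambda\mapsto\bar\lambda^{-1}$. The Riesz projection
\[
P(t)=\frac{1}{2\pi\sqrt{-1}}\oint_{\Gamma}(z\cdot\Id-\gamma(t))^{-1}\,\mathrm{d}z
\]
is $C^{1}$ in $t$, idempotent, has image $E_t$ and kernel $F_t=\bigoplus_{\mu\in\tilde\Lambda(t)}E_{\mu}$ with constant rank $2k=\dim E_{t_0}$, and is real-valued thanks to conjugation invariance of $\Gamma$. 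Lemma~\ref{lem: G othorgonal invariant spaces}, applied to the partition $\Lambda(t)\sqcup\tilde\Lambda(t)$ which is stable under $\lambda\mapsto\bar\lambda^{-1}$, yields $F_t=E_t^{\perp_G}$. In particular $E_t$ is symplectic and $P(t)$ is symmetric with respect to $\omega(\cdot,\cdot)=\langle\cdot,J_{2n}\cdot\rangle$, i.e., $J_{2n}P(t)=P(t)^{T}J_{2n}$.

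Second, I pick a real symplectic basis $q_1(t_0),\ldots,q_{2k}(t_0)$ of the real form of $E_{t_0}$ and define $q_i(t)$ as the solution of the linear ODE $q_i'(t)=P'(t)q_i(t)$. Differentiating $P=P^{2}$ gives $P'=PP'+P'P$, hence $PP'P=0$; applying this, the quantity $r(t):=(\Id-P(t))q_i(t)$ satisfies $r'=-PP'r$ with $r(t_0)=0$, so $r\equiv 0$ and $q_i(t)\in E_t$. Consequently $q_i'(t)=P'(t)q_i(t)\in\ker P(t)=E_t^{\perp_G}$. Reality is preserved because $P(t)$ is real, and the symmetry $J_{2n}P=P^{T}J_{2n}$ gives
\[
\frac{\mathrm{d}}{\mathrm{d}t}\bigl(q_i^{T}J_{2n}q_j\bigr)=(P'q_i)^{T}J_{2n}q_j+q_i^{T}J_{2n}P'q_j=2\,q_i^{T}J_{2n}P'q_j=0,
\]
the last step because $q_i\in E_t$ and $P'q_j\in E_t^{\perp_G}$. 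Hence $Q(t):=[q_1(t)\mid\cdots\mid q_{2k}(t)]$ satisfies $Q(t)^{T}J_{2n}Q(t)\equiv J_{2k}$ and is a $C^{1}$ real symplectic basis of $E_t$.

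Third, $\gamma(t)$-invariance of $E_t$ yields a unique $C^{1}$ curve $M_Q(t)\in\Sp(2k,\mathbb{R})$ with $\gamma(t)Q(t)=Q(t)M_Q(t)$. Introduce the symplectic left inverse $\widetilde Q(t):=-J_{2k}Q(t)^{T}J_{2n}$, for which $\widetilde Q Q=\Id_{2k}$; using $\gamma^{T}J_{2n}\gamma=J_{2n}$, $\gamma Q=QM_Q$ and the symplecticity of $M_Q$, one checks the intertwining $\widetilde Q\gamma=M_Q\widetilde Q$. Differentiating $\gamma Q=QM_Q$ and applying $\widetilde Q$ on the left gives
\[
M_Q'(t)=\widetilde Q\gamma'Q+\widetilde Q\gamma Q'-\widetilde Q Q'M_Q=J_{2k}Q^{T}A(t)QM_Q+\bigl[M_Q,\widetilde Q Q'\bigr],
\]
the first term using $\gamma'=J_{2n}A\gamma$ and $J_{2n}^{2}=-\Id$. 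The crucial cancellation is $\widetilde Q Q'=0$, which is precisely the horizontality $Q'(t)\in(E_t^{\perp_G})^{2k}$ ensured by the construction, and the desired ODE follows.

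The main obstacle is the construction of $Q(t)$ with $\widetilde Q Q'=0$; without it an unwanted commutator $[M_Q,\widetilde Q Q']$ spoils the Hamiltonian form of the reduced ODE. Noticing that the Riesz projection onto $E_t$ along its symplectic complement provides exactly the right ``adiabatic connection'' $q'=P'q$, and that the symmetry $J_{2n}P=P^{T}J_{2n}$ (itself a consequence of $F_t=E_t^{\perp_G}$) is what preserves the symplectic form along transport, is the key point.
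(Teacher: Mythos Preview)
Your argument is correct, and it reaches the same conclusion as the paper while taking a genuinely different route in the construction of $Q(t)$. Both proofs begin with the Riesz projection $P(t)$ onto $E_t$; the paper then projects a fixed symplectic basis of $E_{t_0}$ through $P(t)$, applies a symplectic Gram--Schmidt to obtain an intermediate frame $T(t)$, and finally corrects by a further symplectic path $V(t)$ solving $V'=J_{2k}T^*J_{2n}T'\,V$, setting $Q=TV$. You instead use Kato's parallel transport $q_i'=P'(t)q_i$, and the key observation that $P(t)$ is $\omega$-symmetric (i.e.\ $J_{2n}P=P^{T}J_{2n}$, equivalent to $\ker P=E_t^{\perp_G}$) makes this transport preserve the symplectic form automatically, so no Gram--Schmidt or correction is needed.

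What your approach buys is that the horizontality $Q'(t)\in E_t^{\perp_G}$ is built in, and together with the intertwining $\widetilde Q\gamma=M_Q\widetilde Q$ it kills the commutator $[M_Q,\widetilde QQ']$ in one stroke. In the paper's derivation two separate cancellations are verified, $(Q')^{*}J_{2n}Q=0$ and $Q^{*}J_{2n}\gamma Q'M_Q^{-1}=0$; in fact the second follows from the first (columns of $Q'$ lie in $E_t^{\perp_G}$, which is $\gamma$-invariant), so your organization makes this redundancy transparent. The paper's route, on the other hand, makes no appeal to the identity $PP'P=0$ or to the $\omega$-symmetry of the Riesz projector, relying instead on explicit algebra with $T$ and $V$; this is heavier but perhaps more self-contained for readers unfamiliar with the adiabatic connection.
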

\begin{remark}
Note that the eigenvalues of $M_Q(t)$ are precisely those in $\Lambda(t)$.
\end{remark}
\begin{remark}
 Under the assumption of Lemma~\ref{lem: dim reduction}, similar to $Q(t)$ and $M_{Q}(t)$, we may take $\tilde{Q}(t)$ and $M_{\tilde{Q}}(t)$ for $\tilde{\Lambda}(t)$ and $\tilde{E}_t$. Write $Q(t)$ into two $2n\times k$ blocks: $Q(t)=\begin{pmatrix} Q_1(t) & Q_2(t)\end{pmatrix}$. Similarly, we write $\tilde{Q}(t)=\begin{pmatrix}\tilde{Q}_1(t) & \tilde{Q}_2(t)\end{pmatrix}$. Define $Y(t)=\begin{pmatrix}Q_1(t) & \tilde{Q}_1(t) & Q_2(t) & \tilde{Q}_2(t)\end{pmatrix}$. Then, $Y(t)\in\Sp(2n,\mathbb{R})$ and $\gamma(t)Y(t)=Y(t)(M_{Q}(t)\diamond M_{\tilde{Q}}(t))$, where ``$\diamond$'' denotes the symplectic summation (cf. \cite{LongMR1674313,LongMR1898560}). To be more precise, we write $M_{Q}(t)=\begin{pmatrix}
 M_{Q}^{11}(t) & M_{Q}^{12}(t)\\
 M_{Q}^{21}(t) & M_{Q}^{22}(t)
 \end{pmatrix}$, where the four sub-matrices are of equal size. We divide $M_{\tilde{Q}}(t)$ in a similar way. The symplectic sum of $M_Q(t)$ and $M_{\tilde{Q}}(t)$ is defined to be the square matrix
 \[\begin{pmatrix}
 M_{Q}^{11}(t) & 0 & M_{Q}^{12}(t) & 0\\
 0 & M_{\tilde{Q}}^{11}(t) & 0 & M_{\tilde{Q}}^{12}(t)\\
 M_{Q}^{21}(t) & 0 & M_{Q}^{22}(t) & 0\\
 0 & M_{\tilde{Q}}^{21}(t) & 0 & M_{\tilde{Q}}^{22}(t)
 \end{pmatrix}.\]
 Then, the original system is decomposed into two sub-systems. Moreover, these two sub-systems satisfy \eqref{defn: weaker definiteness assumption} if the original system satisfies such condition.
\end{remark}
\begin{proof}[Proof of Lemma~\ref{lem: dim reduction}]
 Since $\Lambda(t)$ is closed under conjugation, we have that $E_t=\mathbb{C}\otimes(\mathbb{R}^{2n}\cap E_t)$. In this sense, $E_t\subset \mathbb{R}^{2n}$ and we replace $E_t$ by $E_t\cap\mathbb{R}^{2n}$ in the following context. By continuity, there exists $\varepsilon>0$ such that for $t\in[t_0-\varepsilon,t_0+\varepsilon]$, there exists a simple smooth curve $\Gamma$ surrounding all $\Lambda(t)$ and separating $\Lambda(t)$ from $\tilde{\Lambda}(t)$. Then, we may take $P(t)=\frac{1}{2\pi\sqrt{-1}}\int_{\Gamma}(z\cdot\Id-\gamma(t))^{-1}\,\mathrm{d}z$, which projects $\mathbb{R}^{2n}$ onto $E_t$, see e.g. \cite[Section~1.4, Chapter~2]{KatoMR1335452}. Note that $E_{t_0}$ is a symplectic subspace. We choose a symplectic basis $(\xi_1,\ldots,\xi_k,\eta_1,\ldots,\eta_k)$ of $E_{t_0}$ such that $\langle\xi_i,J_{2n}\eta_j\rangle=1_{i=j}$ and $\langle\xi_i,J_{2n}\xi_j\rangle=\langle\eta_i,\eta_j\rangle=0$ for $i,j=1,\ldots,k$. Decreasing $\varepsilon$ if necessary, $(P(t)\xi_1,\ldots,P(t)\xi_k,P(t)\eta_1,\ldots,P(t)\eta_k)$ is a linear basis for $E_t$ for $t\in[t_0-\varepsilon,t_0+\varepsilon]$. However, it is in general no longer a symplectic basis. Nevertheless, by shrinking $\varepsilon$ if necessary, after Gram-Schmidt operation, we obtain a time dependent symplectic basis of $E_t$, which forms a $2n\times 2k$ matrix $T(t)$. Note that $t\mapsto T(t)$ is continuously differentiable and that
 \begin{equation}\label{eq: TstarJTJ}
   T^{*}J_{2n}T=J_{2k}.
  \end{equation}
  In general, we should not take $Q=T$. We consider the following ODE where the solution corresponds to a dynamic change of sympletic basis:
  \begin{equation}\label{eq: change of basis by solving ODE}
   \frac{\mathrm{d}V}{\mathrm{d}t}=J_{2k}T^{*}J_{2n}\frac{\mathrm{d}T}{\mathrm{d}t}V,\quad V(t_0)=\Id.
  \end{equation}
 By differentiating both sides of \eqref{eq: TstarJTJ}, we get that $T^{*}J_{2n}\frac{\mathrm{d}T}{\mathrm{d}t}$ is self-adjoint, which implies that $t\mapsto V(t)$ is a sympletic path, see e.g. \cite[Prop. 3, Section 1, Chapter 1]{EkelandMR1051888}. We define $Q\overset{\text{def}}{=}TV$. By sympleticity of $V$ and \eqref{eq: TstarJTJ}, we see that $Q(t)^{*}J_{2n}Q(t)=J_{2k}$. Also, the equation
  \begin{equation}\label{eq: RQ=QM}
   \gamma(t)Q(t)=Q(t)M_{Q}(t)
  \end{equation}
  uniquely determines a $C^1$ curve $t\mapsto M_Q(t)\in\Sp(2k,\mathbb{R})$. Indeed, by multiplying $Q(t)^{*}J_{2n}$ on both sides of \eqref{eq: RQ=QM}, we obtain that $M_{Q}(t)=-J_{2k}Q(t)^{*}J_{2n}\gamma(t)Q(t)$. By taking the derivatives and using \eqref{eq: RQ=QM}, we obtain that
  \begin{equation}\label{eq: dM/dt=JBM}
   \frac{\mathrm{d}M_{Q}}{\mathrm{d}t}=J_{2k}B_{Q}M_{Q},
  \end{equation}
  where
  \begin{equation}\label{eq: BQ expression}
   B_{Q}=Q^{*}AQ-\left(\frac{\mathrm{d}Q}{\mathrm{d}t}\right)^{*}J_{2n}Q-Q^{*}J_{2n}\gamma\frac{\mathrm{d}Q}{\mathrm{d}t}M_{Q}^{-1}.
  \end{equation}
  By $Q=TV$ and \eqref{eq: change of basis by solving ODE}, we get that
  \begin{equation}\label{eq: dQ/dt}
   \frac{\mathrm{d}Q}{\mathrm{d}t}=\frac{\mathrm{d}T}{\mathrm{d}t}V+TJ_{2k}T^{*}J_{2n}\frac{\mathrm{d}T}{\mathrm{d}t}V.
  \end{equation}
  Hence, together with \eqref{eq: TstarJTJ}, $Q=TV$ and $J_{2n}+J_{2n}^{*}=0$, we get that
  \begin{equation}\label{eq: dQ/dtstarJQ=0}
   \left(\frac{\mathrm{d}Q}{\mathrm{d}t}\right)^{*}J_{2n}Q=V^{*}\left(\frac{\mathrm{d}T}{\mathrm{d}t}\right)^{*}J_{2n}TV+V^{*}\left(\frac{\mathrm{d}T}{\mathrm{d}t}\right)^{*}J_{2n}^{*}TJ_{2k}^{*}T^{*}J_{2n}TV=0.
  \end{equation}
  It remains to prove that
  \begin{equation}\label{eq: QstarJRdQ/dtMinverse=0}
   Q^{*}J_{2n}\gamma\frac{\mathrm{d}Q}{\mathrm{d}t}M_{Q}^{-1}=0.
  \end{equation}
  By multiplying $(V^{*})^{-1}$ on the left and $M_QV^{-1}$ on the right, using $Q=TV$ and \eqref{eq: dQ/dt}, we find that \eqref{eq: QstarJRdQ/dtMinverse=0} is equivalent to
  \[(T^{*}J_{2n}\gamma+T^{*}J_{2n}\gamma TJ_{2k}T^{*}J_{2n})\frac{\mathrm{d}T}{\mathrm{d}t}=0.\]
  It would be sufficient to prove that
  \[T^{*}J_{2n}\gamma TJ_{2k}T^{*}J_{2n}=-T^{*}J_{2n}\gamma.\]
  Note that $\gamma T=TM_{T}$ uniquely determines a sympletic $2k\times 2k$ matrix $M_{T}$ since $T^{*}J_{2n}T=J_{2k}$ and $\gamma$ is sympletic. Indeed, we have that $M_{T}^{*}J_{2k}M_{T}=M_{T}^{*}T^{*}J_{2n}TM_{T}=T^{*}\gamma^{*}J_{2n}\gamma T=T^{*}J_{2n}T=J_{2k}$. By symplecity of $M_{T}$, we have that
  \begin{equation}\label{eq: RTJTstarRstar=TJTstar}
   \gamma TJ_{2k}T^{*}\gamma^{*}=TM_{T}J_{2k}M_{T}^{*}T^{*}=TJ_{2k}T^{*}.
  \end{equation}
  By writing $J_{2n}$ as $\gamma^{*}J_{2n}\gamma$, using \eqref{eq: RTJTstarRstar=TJTstar} and \eqref{eq: TstarJTJ}, we see that
  \[T^{*}J_{2n}\gamma TJ_{2k}T^{*}J_{2n}=T^{*}J_{2n}\gamma TJ_{2k}T^{*}\gamma^{*}J_{2n}\gamma=T^{*}J_{2n}TJ_{2k}T^{*}J_{2n}\gamma=-T^{*}J_{2n}\gamma,\]
  which implies \eqref{eq: QstarJRdQ/dtMinverse=0}. By \eqref{eq: dM/dt=JBM}, \eqref{eq: BQ expression}, \eqref{eq: dQ/dtstarJQ=0} and \eqref{eq: QstarJRdQ/dtMinverse=0}, we get that $\frac{\mathrm{d}M_{Q}}{\mathrm{d}t}=J_{2k}Q^{*}AQM_{Q}$.
\end{proof}

\subsection{Analytic Krein-Lyubarskii theorem}\label{subsect: krein lyubarskii analytic}
In this subsection, we provide a proof of Theorem~\ref{thm: krein lyubarskii c1} when $t\mapsto A(t)$ is real analytic. We partially follow the argument in \cite{YakubovichStarzhinskiiMR0364740} for \eqref{eq: perturbed ham system R} when $\varepsilon\mapsto A(t,\varepsilon)$ is affine in $\varepsilon$. The connection between the first order asymptotics of the eigenvalues and the Jordan structure has already been established in Section~\ref{sect: proof of KL C1 a}. We will only prove the analyticity of the eigenvalues as $t$ varies and the part b) of Theorem~\ref{thm: krein lyubarskii c1}.

By analytic continuation, the real parameter $t$ of \eqref{eq: ham system R modified initial cont pos} is extended in complex parameter $z\in\mathbb{C}$ around $0$:
\begin{equation}\label{eq: complex time linear ham}
  \frac{\mathrm{d}}{\mathrm{d}z}\gamma(z)=J_{2n}A(z)\gamma(z),\gamma(0)\in\Sp(2n,\mathbb{R}).
 \end{equation}
By analyticity of $z\mapsto A(z)$, $z\mapsto \gamma(z)$ is analytic. Since the zero set of an analytic function is isolated, the following two equations are extended to complex $z$: $A^{T}(z)=A(z)$ and $\gamma(z)^{T}J_{2n}\gamma(z)=J_{2n}$.

In \cite{YakubovichStarzhinskiiMR0364740}, they crucially used the key feature of the system that when $\gamma(z)$ has eigenvalue $\omega$ on $U$, the parameter $z$ has to be real. (Roughly speaking, the reason is that $z$ happens to be the eigenvalue of a self-adjoint operator when $\omega\in U$.) Such a phenomenon also appears for our general system \eqref{eq: complex time linear ham}, as stated in the following lemma.
\begin{lemma}\label{lem: eig on U real z}
 Consider the ODE \eqref{eq: complex time linear ham}. We assume that $z\mapsto\gamma(z)$ is analytic (or equivalently, $z\mapsto A(z)$ is analytic), $A(t)$ is real symmetric for $t\in\mathbb{R}$ and for any eigenvector $\xi$ of $\gamma(0)$ associated with an eigenvalue on $U$, $\langle A(0)\xi,\xi\rangle>0$. Then, there exists $\delta>0$, for all $z\in\mathbb{C}\setminus\mathbb{R}$ and $|z|<\delta$, $\gamma(z)$ has no eigenvalue on $U$.
\end{lemma}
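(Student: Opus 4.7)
The plan is to introduce an auxiliary holomorphic bilinear form on $\mathbb{C}^2$ whose diagonal encodes sympleticity and whose transverse derivative reproduces $A$, then to extract the rigidity $\operatorname{Im}(z)=0$ by evaluating it on eigenvectors. This bypasses any Puiseux or signature analysis.

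Set $\Gamma(z,w):=\gamma(w)^{T}J_{2n}\gamma(z)$ for $(z,w)$ in a complex neighborhood of the origin. Since $\gamma(t)\in\Sp(2n,\mathbb{R})$ for real $t$, we have $\gamma(t)^{T}J_{2n}\gamma(t)=J_{2n}$, and by holomorphic continuation (both sides are holomorphic in $t$) this extends to $\Gamma(z,z)=J_{2n}$ for all $z$ near $0$. Hence $(z,w)\mapsto \Gamma(z,w)-J_{2n}$ is a holomorphic matrix-valued function vanishing on the diagonal, so it factors as $\Gamma(z,w)-J_{2n}=(w-z)\Psi(z,w)$ with $\Psi$ holomorphic. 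Using the ODE $\gamma'(w)=J_{2n}A(w)\gamma(w)$ together with $A^{T}=A$ and $J_{2n}^{T}J_{2n}=\Id$, one directly computes $\partial_{w}\Gamma(z,w)=\gamma(w)^{T}A(w)\gamma(z)$, so $\Psi(z,z)=\gamma(z)^{T}A(z)\gamma(z)$.

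Next, suppose $\gamma(z)\xi=\omega\xi$ with $\omega\in U$ and $\|\xi\|_{2}=1$. Because $A(t)$ is real symmetric on the real axis, analytic continuation gives $\gamma(\bar z)=\overline{\gamma(z)}$, and therefore $\gamma(\bar z)\bar\xi=\bar\omega\bar\xi$. We compute
\begin{equation*}
\bar\xi^{T}\Gamma(z,\bar z)\xi=(\gamma(\bar z)\bar\xi)^{T}J_{2n}(\gamma(z)\xi)=\omega\bar\omega\,\bar\xi^{T}J_{2n}\xi=\bar\xi^{T}J_{2n}\xi,
\end{equation*}
using $|\omega|^{2}=1$. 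Hence $\bar\xi^{T}[\Gamma(z,\bar z)-J_{2n}]\xi=0$, which together with the factorization gives $(\bar z-z)\,\bar\xi^{T}\Psi(z,\bar z)\xi=0$.

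Arguing by contradiction, assume there is a sequence $z_{n}\to 0$ with $z_{n}\notin\mathbb{R}$ and unit eigenvectors $\xi_{n}$ of $\gamma(z_{n})$ with eigenvalues $\omega_{n}\in U$. Then $\bar z_{n}-z_{n}\neq 0$, so $\bar\xi_{n}^{T}\Psi(z_{n},\bar z_{n})\xi_{n}=0$ for all $n$. After passing to a subsequence, $\xi_{n}\to\xi_{0}$ is a unit eigenvector of $\gamma(0)$ with eigenvalue $\omega_{0}\in U$. Continuity yields
\begin{equation*}
0=\lim_{n\to\infty}\bar\xi_{n}^{T}\Psi(z_{n},\bar z_{n})\xi_{n}=\xi_{0}^{*}\Psi(0,0)\xi_{0}=(\gamma(0)\xi_{0})^{*}A(0)(\gamma(0)\xi_{0})=|\omega_{0}|^{2}\langle A(0)\xi_{0},\xi_{0}\rangle=\langle A(0)\xi_{0},\xi_{0}\rangle,
\end{equation*}
contradicting the hypothesis $\langle A(0)\xi_{0},\xi_{0}\rangle>0$. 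The main obstacle is discovering the right auxiliary form; once one notices that $\Gamma(z,z)\equiv J_{2n}$ expresses sympleticity and that its first transverse derivative $\Psi(z,z)=\gamma(z)^{T}A(z)\gamma(z)$ already encodes the positivity hypothesis, the argument closes algebraically.
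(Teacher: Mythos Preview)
Your proof is correct and takes a genuinely different route from the paper's argument.

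The paper argues by contradiction via an action-type functional on paths: it decomposes $A(z)=A_{1}(z)+\sqrt{-1}A_{2}(z)$ into its real and imaginary parts, evaluates three integral forms $L_{0},L_{1,z},L_{2,z}$ on the trajectories $x_{n}(s)=\gamma(sz_{n})\xi_{n}$, obtains the scalar relation $L_{0}(x_{n},x_{n})+z_{n}\bigl(L_{1,z_{n}}(x_{n},x_{n})+\sqrt{-1}\,L_{2,z_{n}}(x_{n},x_{n})\bigr)=0$, and then derives a contradiction by comparing the arguments of the two complex numbers on either side (using the estimate $|L_{2,z_{n}}|\leq C\,r_{n}|\sin\theta_{n}|$). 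Your approach replaces this analytic estimate by a purely algebraic divided-difference identity: you recognize that sympleticity analytically continues to $\Gamma(z,z)=J_{2n}$, factor $\Gamma(z,w)-J_{2n}=(w-z)\Psi(z,w)$, identify $\Psi(z,z)=\gamma(z)^{T}A(z)\gamma(z)$ from the ODE, and then pair $z$ with $\bar z$ via the Schwarz reflection $\gamma(\bar z)=\overline{\gamma(z)}$ to kill the Krein form contribution directly. What your method buys is a cleaner, estimate-free argument: you never need to split $A$ into real and imaginary parts, bound the imaginary part, or compare phases; the contradiction falls out of a single limit $\bar\xi_{n}^{T}\Psi(z_{n},\bar z_{n})\xi_{n}\to\langle A(0)\xi_{0},\xi_{0}\rangle$. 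What the paper's approach buys is perhaps greater transparency about \emph{why} the positivity of $A(0)$ on eigenvectors is the right hypothesis, since it appears explicitly as the leading term of the functional $L_{1}$; your method hides this behind the identification of $\Psi(0,0)$.
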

To prove Lemma~\ref{lem: eig on U real z}, we need to modify the argument in \cite[Section~4.1]{YakubovichStarzhinskiiMR0364740}.
\begin{proof}[Proof of Lemma~\ref{lem: eig on U real z}]
It suffices to prove the following cannot happen:
 there exist non-real complex numbers $z_n$ tending to $0$ such that for each $z_n$, $\gamma(z_n)$ has an eigenvector $\xi_n$ with $||\xi_n||_2=1$ associated with some eigenvalue $\lambda_n\in U$. We write $z_n$ in polar coordinate as $r_ne^{\sqrt{-1}\theta_n}$ with $r_n>0$ and $\theta_n\in(-\pi,0)\cup(0,\pi)$. By taking subsequence if necessary, we assume that $\lim_{n\to+\infty}\lambda_n=\lambda$, $\lim_{n\to+\infty}\xi_n=\xi$ and $\lim_{n\to+\infty}\theta_n=\theta$.

 We expand $A(z)$ in Taylor series as $\sum_{j\geq 0}z^jA^{(j)}$ around $0$. Since $A(t)$ is real symmetric for $t\in\mathbb{R}$, $A^{(j)}$ are real symmetric for all $j\geq 0$. For $r\geq 0$ and $\theta\in\mathbb{R}$, we define $A_1(re^{\sqrt{-1}\theta})=\sum_{j\geq 0}\cos(j\theta)r^{j}A^{(j)}$ and $A_2(re^{\sqrt{-1}\theta})=\sum_{j\geq 1}\sin(j\theta)r^{j}A^{(j)}$. Then, $A_{1}(z)$ and $A_2(z)$ are real symmetric matrices and $A(z)=A_{1}(z)+\sqrt{-1}A_{2}(z)$. Moreover, there exists $C=C(A)<\infty$ such that for all $r\in[0,C^{-1})$ and $\theta\in[-\pi,\pi]$, for all $\xi\in\mathbb{C}^{2n}$ with $||\xi||_2=1$,
\begin{equation}\label{eq: bound A2}
 \left|\langle A_2(re^{\sqrt{-1}\theta})\xi,\xi\rangle\right|\leq C\cdot r\cdot |\sin(\theta)|,
\end{equation}
where $\langle\cdot,\cdot\rangle$ denotes the standard inner product on $\mathbb{C}^{2n}$, which is linear in the first vector.

For $\rho\in U$, we denote by $\mathfrak{X}(\gamma(0),\rho)$ the space of analytic paths $y:[0,1]\to\mathbb{C}^{2n}$ with the boundary condition $\gamma(0)y(1)=\rho y(0)$. Define three functions $L_0$, $L_1$ and $L_2$ on $\cup_{\rho\in U}\mathfrak{X}(\gamma(0),\rho)\times \mathfrak{X}(\gamma(0),\rho)$ as follows: for $y_1,y_2\in \mathfrak{X}(\gamma(0),\rho)$,
\begin{align*}
 L_0(y_1,y_2)=&\int_{0}^{1}\left\langle J_{2n}\frac{\mathrm{d}}{\mathrm{d}s}y_1(s),y_2(s)\right\rangle\,\mathrm{d}s\\
 L_{1,z}(y_1,y_2)=&\int_{0}^{1}\left\langle A_1(sz)y_1(s),y_2(s)\right\rangle\,\mathrm{d}s,\\
 L_{2,z}(y_1,y_2)=&\int_{0}^{1}\left\langle A_2(sz)y_1(s),y_2(s)\right\rangle\,\mathrm{d}s.
\end{align*}
Note that $L_0(y_1,y_2)=\overline{L_0(y_2,y_1)}$, $L_{1,z}(y_1,y_2)=\overline{L_{1,z}(y_2,y_1)}$ and $L_{2,z}(y_1,y_2)=\overline{L_{2,z}(y_2,y_1)}$. Hence, $L_0(y,y), L_{1,z}(y,y), L_{2,z}(y,y)\in\mathbb{R}$ for $y\in\mathfrak{X}(\gamma(0),\rho)$.

Define $x_{n}(s)=\gamma(sz_n)\xi_n$ for $s\in[0,1]$. Then, $x_n\in \cup_{\rho\in U}\mathfrak{X}(\gamma(0),\rho)$. By \eqref{eq: complex time linear ham}, we have that
\begin{equation}
 L_0(x_{n},x_{n})+z_n(L_{1,z_n}(x_{n},x_{n})+\sqrt{-1}L_{2,z_n}(x_{n},x_{n}))=0.
\end{equation}
Necessarily, the argument $\theta_n$ of $z_n$ and the argument $\psi_n$ of the complex number $L_{1,z_n}(x_{n},x_{n})+\sqrt{-1}L_{2,z_n}(x_{n},x_{n})$ differ by a multiple of $\pi$, or equivalently,
\begin{equation}\label{eq: argument inquality sin}
|\sin(\theta_n)|=|\sin(\psi_n)|.
\end{equation}
By \eqref{eq: bound A2}, there exists $C=C(A)<\infty$ such that for large enough $n$,
\[L_{2,z_n}(x_{n},x_{n})\leq C\cdot r_n|\sin(\theta_n)|.\]
By continuity, $\lim_{n\to+\infty}L_{1,z_n}(x_n,x_n)=\langle A(0)\xi,\xi\rangle>0$. Hence, as $n\to+\infty$, $\psi_n$ and $|\sin(\psi_n)|$ are of the order $r_n|\sin(\theta_n)|$, which contradicts with \eqref{eq: argument inquality sin} since $\lim_{n\to+\infty}r_n=0$.
\end{proof}
Consider the characteristic polynomial $p(\lambda,z)=\det(\lambda\cdot\Id-\gamma(z))$. Assume that $\lambda_0=e^{\sqrt{-1}\theta_0}\in U$ is an eigenvalue of $\gamma(0)$. By Weierstrass's preparation theorem of the local form of analytic functions in multi-variables, there exist integers $\ell$ and $M$ such that for $(\lambda,z)$ close to $(\lambda_0,0)$, we have that
\begin{equation*}
 p(\lambda,z)=(\lambda-\lambda_0)^{\ell}(z^{M}+a_{M-1}(\lambda)z^{M-1}+\cdots+a_{0}(\lambda))b(\lambda,z),
\end{equation*}
where $b(\lambda,z)$ is non-zero and analytic, $\{a_{i}\}_{i=0,\ldots,M-1}$ are analytic in $\lambda$ and vanish at $\lambda_0$. Note that $\ell=0$ and hence,
\begin{equation}\label{eq: weierstrass charateristic polynomial local form}
 p(\lambda,z)=(z^{M}+a_{M-1}(\lambda)z^{M-1}+\cdots+a_{0}(\lambda))b(\lambda,z).
\end{equation}
(Otherwise, $\lambda_0$ is an eigenvalue of $\gamma(z)$ as long as $z$ is sufficient close to $0$, which contradicts with Lemma~\ref{lem: eig on U real z}. Alternatively, we could see that from the first order asymptotics in Theorem~\ref{thm: krein lyubarskii c1} a) proved in Section~\ref{sect: proof of KL C1 a}. Or simply follow the argument of \cite[Proposition~2, Section~3, Chapter~1]{EkelandMR1051888}.) The solution of $p(\lambda,z)=0$ coincides with the solution of $z^{M}+a_{M-1}(\lambda)z^{M-1}+\cdots+a_0(\lambda)=0$, which is the union of the graphs of several multi-valued analytic functions $\{z_{i}(\lambda)\}_{i=1,\ldots,\tilde{M}}$ ($\tilde{M}\leq M$) in $\lambda$. By Lemma~\ref{lem: eig on U real z}, when $\lambda$ is on $U$, $z_i(\lambda)$ must lie on $\mathbb{R}$. This forces that each $z_i$ is actually single-valued analytic functions and $\tilde{M}=M$, see the lemma in \cite[Section~1.5, Chapter~3]{YakubovichStarzhinskiiMR0364740}. Hence,
\begin{equation}\label{eq: factorization of zM aM-1ZM-1 a0}
 z^{M}+a_{M-1}(\lambda)z^{M-1}+\cdots+a_0(\lambda)=\prod_{i=1}^{M}(z-z_i(\lambda)).
\end{equation}
Let
\begin{equation}\label{eq: expansion of zjlambda}
z_i(\lambda)=\sum_{k\geq j_i}e_{i,k}(\lambda-\lambda_0)^{k},\quad e_{i,j_i}\neq 0
\end{equation}
be the Taylor expansion of $z_i(\lambda)$. Inverting that expansion, we see that
$\lambda=\lambda_0+h_{i}(z^{\frac{1}{j_i}})$, where $h_i$ is analytic, $h_i(0)=0$ and $h_i'(0)\neq 0$. Note that $\lambda-\lambda_0\overset{z\to 0}{\sim} h_{i}'(0)z^{\frac{1}{j_i}}$. Compared with \eqref{eq: asymp eigenvalues}, we need to show that $\{j_i\}_{i=1,\ldots,M}$ are exactly the sizes of Jordan blocks of $\gamma(0)$ associated with $\lambda_0$. Then, $\{h_i'(0)\}_{i=1,\ldots,M}$ will be given by \eqref{eq: asymp eigenvalues}.

Firstly, let us show that $M$ is precisely the number of Jordan blocks associated with $\lambda_0$. By multiplying the first order asymptotics of the eigenvalues in \eqref{eq: asymp eigenvalues}, we see that $p(\lambda_0,t)$ is of the order $t^{m}$ as $t\to 0$, where $m$ is the geometric multiplicity of $\lambda_0$, or equivalently, $m$ is the number of Jordan blocks. On the other hand, by \eqref{eq: weierstrass charateristic polynomial local form}, $p(\lambda_0,z)$ is of the order $z^{M}$ as $z\to 0$. Hence, $M$ equals $m$.

Next, we show that $\{j_i\}_{i=1,\ldots,M}$ are the sizes of the Jordan blocks. Again, by Weierstrass preparation theorem, the analytic function $g_i(\lambda,z)=z-z_{i}(\lambda)$ in variables $\lambda$ and $z$ has the following local form near $(\lambda_0,0)$:
\begin{equation}\label{eq: weierstrass z minus zjlambda local form}
 z-z_{i}(\lambda)=z^{\ell_i}\left((\lambda-\lambda_0)^{\tilde{j}_i}+c_{i,\tilde{j}_i-1}(z)(\lambda-\lambda_0)^{\tilde{j}_i-1}+\cdots+c_{i,0}(z)\right)b_{i}(\lambda,z),
\end{equation}
where $\ell_i$ and $\tilde{j}_i$ are integers, the analytic function $b_{i}(\lambda,z)$ doesn't vanish near $(\lambda_0,0)$ and the analytic functions $\{c_{i,k}(z)\}_{k=0,\ldots,\tilde{j}_i-1}$ vanish at $0$. Clearly, $\ell_i$ is zero. Otherwise, the set of eigenvalues of $\gamma(0)$ would contain an open neighborhood of $\lambda_0$. Taking $z=0$ and compare with the expansion \eqref{eq: expansion of zjlambda} of $z_i(\lambda)$, we find that $\tilde{j}_i=j_i$. Combining \eqref{eq: weierstrass charateristic polynomial local form}, \eqref{eq: factorization of zM aM-1ZM-1 a0} and \eqref{eq: weierstrass z minus zjlambda local form}, we get that
\begin{equation}
 p(\lambda,z)=\prod_{i=1}^{m}p_{\lambda_0,i}(\lambda,z)\cdot f(\lambda,z),
\end{equation}
where $f(\lambda,z)=b(\lambda,z)\prod_{i=1}^{m}b_{i}(\lambda,z)$ and \[p_{\lambda_0,i}(\lambda,z)=(\lambda-\lambda_0)^{j_i}+c_{i,j_i-1}(z)(\lambda-\lambda_0)^{j_i-1}+\cdots+c_{i,0}(z).\] Taking $z=0$, we see that $\sum_{i=1}^{m}j_i$ equals the algebraic multiplicity of $\lambda_0$. Moreover, for $z$ close to $0$, the roots of $p(\lambda,z)$ near $\lambda_0$ coincide with those of $\prod_{i=1}^{m}p_{\lambda_0,i}(\lambda,z)$ with multiplicities. Comparing \eqref{eq: asymp eigenvalues} with the asymptotics $\lambda-\lambda_0\sim h_{i}'(0)z^{\frac{1}{j_i}}$ of the roots of $\{p_{\lambda_0,i}(\lambda,z)\}_{i=1,\ldots,m}$, we conclude that $\{j_i\}_{i=1}^{m}$ are precisely the sizes of Jordan blocks. This completes the argument for the analyticity of eigenvalues and their first order asymptotics when $t$ varies from $0$.

Next, we prove the part b) of Theorem~\ref{thm: krein lyubarskii c1}. We only present the proof for the case that $t$ increases from $0$. The other case is essentially the same and is left to the reader. Together with the first order asymptotics in \eqref{eq: asymp eigenvalues}, it suffices to show that for $t$ close to $0$,
\begin{itemize}
\item[i)] the eigenvalues moving tangential to the circle actually move along the circle
\item[ii)] they are Krein definite.
\end{itemize}
By Theorem~\ref{thm: krein lyubarskii c1} a), i) implies the semi-simplicity of these eigenvalues on the circle for non-zero real $t$ close to $0$.

If i) fails, then there exist an integer $j$, a real number $v$, an analytic function $h$ and a sequence $(t_n,\lambda_n)$ such that $t_n$ decreases to $0$ as $n$ increases to infinity, $\lambda_n\notin U$, $\lambda_n$ is an eigenvalue of $\gamma(t_n)$, $\lambda_n-\lambda_0=h(t_n^{\frac{1}{j}})$ and $\lambda_n-\lambda_0\overset{n\to\infty}{\sim}\sqrt{-1}\lambda_0\cdot v\cdot t_n^{\frac{1}{j}}$. For each $n$, let us consider the eigenvalues $\lambda_0+h(t_n^{\frac{1}{j}}e^{\sqrt{-1}\varphi_n/j})$ of $\gamma(t_ne^{\sqrt{-1}\varphi_n})$. As $\varphi_n$ increases from $-\pi$ to $\pi$, they rotate around $\lambda_0$ for roughly $\frac{2\pi}{j}$ radians. By first order estimates of the eigenvalues, for $n$ sufficient large, there exists $\phi_n\notin \pi\mathbb{Z}$ such that $\gamma(t_ne^{\sqrt{-1}\phi_n})$ has an eigenvalue on $U$. (Indeed, $\phi_n\to 0$ as $n\to\infty$.) This contradicts with Lemma~\ref{lem: eig on U real z} since $t_ne^{\sqrt{-1}\phi_n}\notin\mathbb{R}$.

Next, we show that the eigenvalues moving on the circle are Krein definite when $t$ is sufficiently close to $0$ with their Krein types determined by their moving directions.

Let us verify the statement as $t$ increases from $0$. The other case is similar and we left the proof to the reader. We have seen that the eigenvalue $\lambda(t)=\lambda_0+h(t^{\frac{1}{j}})$ for certain integer $j$ and certain analytic function $h$. Note that $\lambda'(t)=\frac{1}{j}h'(t^{\frac{1}{j}})t^{\frac{1}{j}-1}$. By continuity of $h'$ and $h'(0)\neq 0$, we see that the eigenvalue on $U$ has a deterministic moving direction along $U$ as $t$ increases from $0$ a bit. If the eigenvalue on $U$ situates on the counter-clockwise direction of $\lambda_0$ in the local sense, then the eigenvalue moves counter-clockwise along the circle as $t$ slightly increases from $0$. Hence, by Theorem~\ref{thm: krein lyubarskii c1} a), there is no Krein indefinite eigenvalue on $U$ situating on the counter-clockwise side of $\lambda_0$. Together with their moving direction, by Theorem~\ref{thm: krein lyubarskii c1} a), we see that those eigenvalues must be semi-simple and Krein positive definite. (Otherwise, if there exists $t_1>0$ such that one of those eigenvalues on the circle is Krein indefinite, then according to the branching mechanism described in Theorem~\ref{thm: krein lyubarskii c1} a) together with the fact that no eigenvalue entrances or escapes $U$ during this period of time, there must exist eigenvalues with different moving directions on the counter-clockwise side of $\lambda_0$ on $U$, which is a contradiction.) Similarly, if an eigenvalues on $U$ situates on the clockwise side of $\lambda_0$, then it is Krein negative definite and moves clockwise along the circle.

Finally, we will see that the eigenvalues of $\gamma(z)$ branching from $\lambda_0$ are semi-simple for $z$ in a small enough punctured disk of $0$. Moreover, the corresponding eigenvectors are also multi-valued analytic functions and admit Puiseux expansion. To see this, it suffices to prove that there exist $m$ $\mathbb{C}^{2n}$-valued analytic functions $\{v_i\}_{i=1,\ldots,m}$ such that $\{v_i(z^{\frac{1}{j_i}})\}_{i=1,\ldots,m}$ is a set of $\sum_{i=1}^{m}j_i$ many linearly independent vectors for sufficiently small $z$ and for $i=1,\ldots,m$, we have that
\begin{equation}\label{eq: analytic eig value eig vector h v}
 \gamma(z^{j_i})v_{i}(z)=(h_{i}(z)+\lambda_0)v_{i}(z).
\end{equation}
Define a family of operators analytic in $z$:
\[T_i(z)\overset{\text{def}}{=}(\gamma(z^{j_i})^{T}-(\bar{\lambda_0}-\overline{h_{i}(\bar{z})})\cdot\Id)(\gamma(z^{j_i})-(\lambda_0-h_{i}(z))\cdot\Id).\]
Note that $T_i(z)^{*}=T_i(z)$ for real valued $z$. Such a family of operator is said to be \emph{symmetric}. By perturbation theories of symmetric operators \cite[Sections~6.1 and 6.2, Chapter~2]{KatoMR1335452}, eigenvalues and corresponding eigenvectors of $T_i$ are analytical for real $z$. More precisely, there exist $m$ analytic complex-valued functions $\mu_{i,1}, \mu_{i,2},\ldots,\mu_{i,m}$ and $m$ analytic $\mathbb{C}^{2n}$-valued functions $\zeta_{i,1},\ldots,\zeta_{i,m}$ such that $\zeta_{i,1},\ldots,\zeta_{i,m}$ are orthonormal and $T_i(t)\zeta_{i,k}(t)=\mu_{i,k}(t)\zeta_{i,k}(t)$ for $k=1,\ldots,m$ and real $t$ close to $0$. Since non-zero analytic functions have isolated zeros, there exist $\delta>0$ and an integer $g:=g(i)=g(h_i)$ such that $\mu_{i,1},\ldots,\mu_{i,g}$ are identically zero and $\mu_{i,g+1},\ldots,\mu_{i,m}$ are non-zero on $[-\delta,0)\cup(0,\delta]$. Note that $T_i(t)\zeta_{i,k}(t)=0$ iff
\begin{equation}\label{eq: analytic eig vector zeta}
 \gamma(t^{j_i})\zeta_{i,k}(t)=(h_{i}(t)+\lambda_0)\zeta_{i,k}(t).
\end{equation}
Hence, for real and sufficiently small $t$, $g(i)$ equals to the geometric multiplicity of the eigenvalue $h_{i}(t)+\lambda_0$ of the matrix $\gamma(t^{j_i})$.

We define an equivalence relation on the set $\{1,\ldots,m\}$: $i\sim i'$ if either $i=i'$ or $j_i=j_{i'}$ and $h_i(z)=h_{i'}(ze^{2k(i,i')\pi\sqrt{-1}/j_i})$ for some integer $k(i,i')$. Then, $i\sim i'$ iff $h_i(z^{\frac{1}{j_i}})$ and $h_{i'}(z^{\frac{1}{j_{i'}}})$ are the same multi-valued analytic functions. In particular, $i\sim i'$ implies that $j_i=j_{i'}$.

Let us firstly consider a special (yet generic) case that the equivalence relation ``$\sim$'' coincides with the standard one ``$=$''. Since non-zero analytic functions have isolated zeros, for different $i$ and $i'$, $h_i(z^{\frac{1}{j_i}})$ and $h_i'(z^{\frac{1}{j_i}})$ are disjoint in a punctured neighbourhood of $0$. In this case, together with the first order asymptotics in \eqref{eq: asymp eigenvalues}, we see that the eigenvalues of $\gamma(z)$ branching from $\lambda_0$ have algebraic multiplicity $1$ as $z$ varies from $0$. For $i=1,\ldots,m$, we take $v_i$ to be the direct analytic continuation of $\zeta_{i,1}$. Then, they satisfy \eqref{eq: analytic eig value eig vector h v}. Moreover, for $z$ in a small enough punctured neighborhood of $0$, the set $\{v_i(z^{\frac{1}{j_i}})\}_{i=1,\ldots,m}$ is linearly independent since they are the eigenvectors of different eigenvalues of $\gamma(z)$.

The general case is more complicated. For the set of eigenvectors $\{v_i(z^{\frac{1}{j_i}})\}_{i=1,\ldots,m}$, we wish to take all $\zeta_{i,k}(z^{\frac{1}{j_i}})$ for $i=1,\ldots,m$ and $k\leq g(i)$. However, there exist duplications: if $i\sim \tilde{i}$, then $h_i(z^{\frac{1}{j_i}})$ and $h_{i'}(z^{\frac{1}{j_{i'}}})$ are the same multi-valued analytic functions and hence, the linear spaces $\Span\{\zeta_{i,k}(z^{\frac{1}{j_i}}):k\leq g(i)\}$ and $\Span\{\zeta_{i',k}(z^{\frac{1}{j_{i'}}}):k\leq g(i')\}$ are identical. Instead of collecting vectors $\zeta_{i,k}(z^{\frac{1}{j_i}})$ for each $i=1,\ldots,m$, we collect vectors for each equivalence class $[i]$, where we denote by $[i]$ the equivalence class of $i$ with respect to the equivalence relation $\sim$. We will show that for $i=1,\ldots,m$, $g(i)$ equals the cardinality $\sharp [i]$ of $[i]$ so that we have the correct number of eigenvectors, i.e., $\sum_{[i]}g(i)j_i=\sum_{i=1}^{m}j_i$. Clearly, $g(i)\leq \sharp [i]$ since the algebraic multiplicity dominates the geometric multiplicity. To get the converse inequality, recall that the eigenvalues branching from $\lambda_0$ are semi-simple for real $t$ close to $0$, which implies that $g(i)=\sharp [i]$ if $h_i(t)\in U$ for real $t$ close to $0$. To obtain the inequality in the general case, we may perform a rotation $t\mapsto te^{2\ell\pi\sqrt{-1}/j_i}$ in \eqref{eq: analytic eig vector zeta} for some properly chosen integer $\ell$. Eventually, for $i=1,\ldots,m$, we define $v_i$ in the following way.
\begin{itemize}
 \item[1)] Take the equivalence class $[i]$ of $i$ and list the integers in $[i]$ in increasing order.
 \item[2)] Find the smallest element $\ell(i)$ in $[i]$ and define $k(i)=\sharp \{i'\in[i]:i'\leq i\}$.
 \item[3)] Define $v_i$ to be the direct analytic continuation of $\zeta_{\ell(i),k(i)}$.
\end{itemize}
The linear independence of the set of vectors $\{v_i(z^{\frac{1}{j_i}})\}_{i=1,\ldots,m}$ is left to the reader.
\bibliographystyle{amsalpha}
\providecommand{\bysame}{\leavevmode\hbox to3em{\hrulefill}\thinspace}
\providecommand{\MR}{\relax\ifhmode\unskip\space\fi MR }
\providecommand{\MRhref}[2]{%
  \href{http://www.ams.org/mathscinet-getitem?mr=#1}{#2}
}
\providecommand{\href}[2]{#2}

\end{document}